\documentclass[final,a4paper]{amsart}

\title{Schatten classes for Hilbert modules over commutative $C^*$-algebras}
\author{Abel B. Stern \and Walter D. van Suijlekom}

  \address{
Institute for Mathematics, Astrophysics and Particle Physics, Radboud
University Nijmegen, Heyendaalseweg 135, 6525 AJ Nijmegen, The Netherlands.
}

\usepackage[final]{microtype}
\usepackage{amsmath,amssymb,amsfonts,amsthm}
\usepackage{braket}
\usepackage{todonotes}
\usepackage{enumitem}
\usepackage{hyperref}

\hypersetup{pdfauthor={Abel B. Stern and Walter D. van Suijlekom},pdftitle={Schatten classes for Hilbert modules over commutative C*-algebras}}

\usepackage{mathtools}
\usepackage{ragged2e}

\DeclareSymbolFont{bbold}{U}{bbold}{m}{n}
\DeclareSymbolFontAlphabet{\mathbbold}{bbold}

\newcommand{\strongly}{$*$-strongly}
\newcommand{\unitization}[1]{{#1}^+}
\newcommand{\indicator}[1]{\mathbbold{1}_{#1}}
\newcommand{\cstar}{$C^*$}
\newcommand{\C}{\mathbb{C}}
\newcommand{\R}{\mathbb{R}}
\newcommand{\N}{\mathbb{N}}
\newcommand{\Z}{\mathbb{Z}}
\newcommand{\ip}[1]{\left\langle #1 \right\rangle}
\newcommand{\ketbra}[2]{\ket{#1} \bra{#2}}
\newcommand{\fram}{\mathfrak{e}}
\newcommand{\framt}{\mathfrak{f}}

\newcommand{\eqdef}{\coloneqq}
\newcommand{\lnmp}[1]{\mathcal{L} \left( #1 \right)}
\newcommand{\cptmp}[1]{\mathcal{K} \left( #1 \right)}

\newcommand{\eqex}{\overset{\underset{!}{}}{=}}
\newcommand{\sch}[2]{\mathcal{L}^{#1} \left( #2 \right)}
\newcommand{\norm}[1]{\left\lVert #1 \right\lVert}

\newcommand{\cbstr}{C^{\mathrm{str}}_\mathrm{b}}

\newcommand{\gfdual}[1]{\widehat{#1}}
\newcommand{\Del}{\Delta}

\newcommand{\spinc}{spin\textsuperscript{c}}

\DeclareMathOperator{\res}{Res}
\DeclareMathOperator{\End}{End}
\DeclareMathOperator{\id}{id}
\DeclareMathOperator{\tr}{tr}
\DeclareMathOperator{\ran}{ran}

\DeclareMathOperator{\mat}{Mat}

\DeclareMathOperator{\dom}{dom}

\newtheorem{theorem}{Theorem}[section]
\newtheorem{lemma}[theorem]{Lemma}
\newtheorem{proposition}[theorem]{Proposition}
\newtheorem{corollary}[theorem]{Corollary}

\theoremstyle{plain}
\newtheorem{example}[theorem]{Example}
\newtheorem{remark}[theorem]{Remark}

\theoremstyle{plain}
\newtheorem{definition}[theorem]{Definition}
\newtheorem{assumption}[theorem]{Assumption}

\newcommand{\nocontentsline}[3]{}
\newcommand{\KeepFromToc}[1]{\bgroup\let\addcontentsline=\nocontentsline#1\egroup}

\newcommand{\myacknowledgments}[1]{%
\KeepFromToc{%
\section*{Acknowledgments}
#1}%
}

\newcommand{\nwoacknowledgment}{
\vspace*{\baselineskip}
\noindent
\begin{minipage}[t][15mm]{11mm}
\includegraphics[height=15mm]{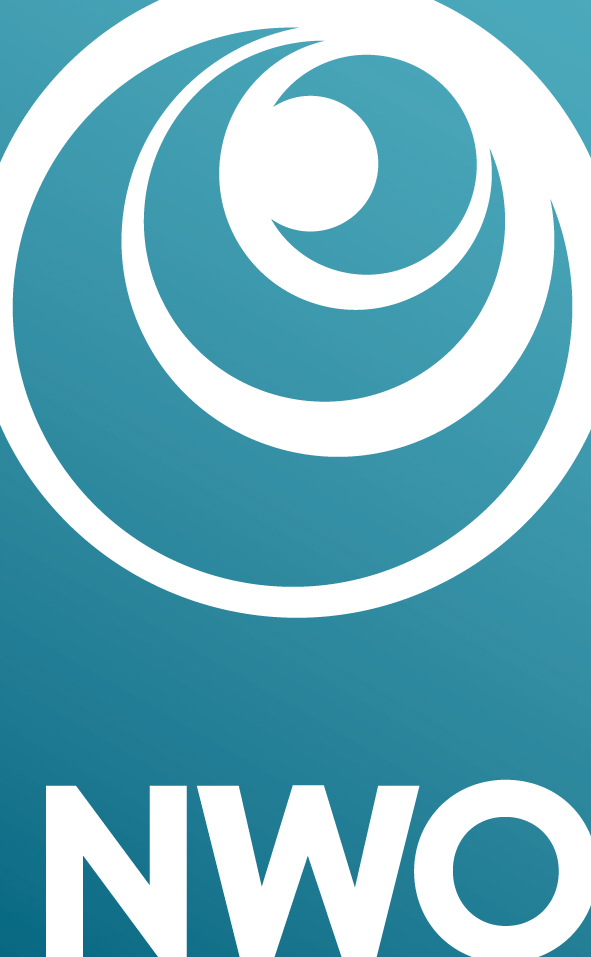}
\end{minipage}\hfill%
\begin{minipage}[b][15mm][c]{\linewidth-11mm}
This work is part of the research programme of the Foundation for Fundamental Research on Matter (FOM), which is part of the Dutch Research Council (NWO).
\end{minipage}
}

\makeatletter
\DeclareMathOperator{\exterior}{\@ifnextchar^\@exterior{\@exterior^{}}}
\def\@exterior^#1{\mathop{\bigwedge\nolimits^{\!#1}}}
\makeatother

\usepackage[backend=biber, giveninits=true, maxnames=5, useprefix=false, sorting=nyt]{biblatex}
\addbibresource{library.bib}
\AtEveryBibitem{%
  \clearfield{issn} 
  \clearfield{doi} 

  \ifentrytype{online}{}{
	\ifentrytype{software}{}{
	    \clearfield{url}
	  }
	}
}

\begin{document}

\begin{abstract}
We define Schatten classes of adjointable operators on Hilbert modules over abelian \cstar{}-algebras.
Many key features carry over from the Hilbert space case. In particular, the Schatten classes form two-sided ideals of compact operators and are equipped with a Banach norm and a \cstar{}-valued trace with the expected properties.
For trivial Hilbert bundles, we show that our Schatten-class operators can be identified bijectively with Schatten-norm--continuous maps from the base space into the Schatten classes on the Hilbert space fiber, with the fiberwise trace.
As applications, we introduce the \cstar{}-valued Fredholm determinant and operator zeta functions, and propose a notion of $p$-summable unbounded Kasparov cycles in the commutative setting.
\end{abstract}

\maketitle
\tableofcontents

\section{Introduction}

The trace is a fundamental and highly versatile invariant of operators on Hilbert spaces.
In many applications, however, one is rather concerned with continuous \emph{families} of such operators.
From the perspective of Gelfand duality, the natural framework for such continuous families is that of Hilbert \cstar-modules over an abelian base.
The present study provides a systematic construction of trace and Schatten classes in this setting.
We establish some key properties for them that are familiar from the Hilbert space case.
We also consider some applications of the general theory.

\subsubsection*{The finite-rank trace}

The $\ast$-algebra $M(A)$ of finite matrices over a \cstar-algebra $A$ comes naturally equipped with a positive linear map
\[
\tr \colon M(A) \to A,\ (a_{ij})_{ij} \to \sum_i a_{ii},
\]
which clearly commutes with the entrywise lift of linear maps $A \to B$
and is cyclic if and only if the algebra $A$ is commutative.

If $E$ is a finitely generated projective Hilbert $A$-module, any compact adjointable endomorphism of $E$ can be represented as an element of $M(A)$ by a choice of isomorphism $E \oplus F \simeq A^n$.
Whenever $A$ is commutative, cyclicity implies that the trace of the resulting matrix is invariant under the choice of isomorphism and so 
constitutes the $A$-valued (Hattori-Stallings) trace on $\End_A (E)$.

Chapter \ref{chapter:abelian-schatten-classes}, below, introduces a robust framework that generalizes the construction to \emph{countably} generated Hilbert modules.


\subsubsection*{Continuous families of Schatten-class operators}

Under Gelfand duality and the Serre-Swan theorem, the finite-rank trace is just the fiberwise trace $\tr \colon \Gamma(\End V) \to C_0(X)$ of continuous endomorphisms of finite-rank Hermitian vector bundles over a locally compact Hausdorff space $X$.

If $V \to X$ is instead a continuous field of separable Hilbert spaces, then there is still a trace map on the fiberwise trace classes.
The challenge is then to unify these fiberwise trace classes in such a way as to yield a $C_0(X)$-valued fiberwise trace that retains the fundamental properties of the trace class on a fixed Hilbert space $H$.

As a fundamental example, consider the trivial bundle $H \times X \to X$ with fiber a fixed separable Hilbert space $H$.
The \cstar-algebra of its adjointable endomorphisms consists of all \strongly{} continuous, bounded families of bounded operators on $H$.
One wonders whether this algebra, denoted $\cbstr(X, B(H))$, contains two-sided ideals of ``continuous Schatten-class operators'' such that some or most of the usual theory of Schatten classes on Hilbert spaces is preserved.

In order to ensure continuity of the trace, the least one should demand of such an ideal is that the pointwise Schatten norms lie in $C_0(X)$.
On the other hand, the strongest reasonable condition at hand is that the families are themselves Schatten-norm continuous, that is, that they lie in the Banach space $C_0(X, \sch{p}{H})$.
Through careful control over the relation between the Schatten classes on the standard Hilbert space $l^2(\C)$ and the complex matrix algebras $M_n(\C)$, we are able in Theorem~\ref{theorem:lp-iff-tr-tp-in-c0x} to show that these conditions do in fact coincide on $\cbstr(X, B(H))$ and yield a two-sided ideal that is contained in the compact operators and is closed under its Banach norm. 

Kasparov's stabilization theorem and unitary invariance of the Schatten norms allow us to easily generalize the trivial bundle example to all continuous fields of separable Hilbert spaces in Theorem~\ref{theorem:schp-is-ideal-and-norm-has-properties}.
A further upshot of this approach is that much of the pointwise Schatten-class theory, including the Hölder-von Neumann inequality, carries over easily to the general case, cf. Theorem~\ref{thm:norm-prop-Lp}.

\subsubsection*{Frames and the fiberwise trace}

The theory of Schatten classes on Hilbert spaces is often mediated through the language of orthonormal bases and diagonalization. The approach of Section~\ref{section:schatten-class-standard-module} shows that one may very well work with (standard normalized) \emph{frames} \cite{MR1938798,MR1949886} instead. This allows for straightforward generalization of the familiar formulas to the theory of Hilbert modules, and indeed, the result is what one would hope for: the fiberwise trace turns out to be the norm-convergent sum over the diagonal in a given frame, cf. Theorem~\ref{theorem:nonstandard-schatten-class-is-pullback-by-frame}.
In the context of frames, it was earlier remarked in \cite{MR1332446},\cite[Proposition 4.8]{MR1938798} that the obvious notion of Hilbert-Schmidt inner product is invariant both under the choice of frame and under the adjoint. That observation is supplied with the necessary context as a special case of our Schatten-class operators in Section~\ref{section:hilbert-schmidt}.

\subsubsection*{Applications}
By the same principles as for the finite-rank trace, \cite{MR424786} introduced, in the context of K-theory, the \emph{Fredholm determinant} of endomorphisms of finitely generated modules over unital commutative algebras. As the Fredholm determinant is interesting in its own right, Section~\ref{section:fredholm-determinant} uses the result on the Schatten classes to extend its definition and basic properties to the setting of countably generated modules over unital commutative \cstar-algebras. A straightforward generalization of \cite{MR432738} remains however elusive, due to the conceptual problems in generalizing the relevant category.

Spectral geometry is the study of Riemannian manifolds $M$ via the spectra of differential operators, such as \spinc{}-Dirac operators $D$, on $M$. An important example of spectral invariant is the localized heat trace $t \mapsto \tr f e^{-t D^2}, f \in C(M)$. It determines the volume and total scalar curvature of $M$, is strongly related to the Atiyah-Singer Index Theorem \cite{MR1396308}, and is able to describe classical field theories on $M$ through the spectral action principle \cite[Chapter 11]{MR2371808}.
The first step in generalizing the above to unbounded Kasparov cycles, that is, certain $(C_0(M), C_0(N))$-Hilbert bimodules carrying a selfadjoint, regular, $C_0(N)$-linear unbounded operator $S$, is to make sense of the expressions $\tr f e^{-t S^2}$ and $\tr f |S|^{-z}$ as elements of $C_0(N)$. Sections~\ref{section:zeta-function},\ref{sec:summ-some-unbo} embark on the necessary theory. Open questions for further research remain, particularly in the direction of zeta residues and compatibility with the interior product in unbounded KK-theory.
Moreover, the case where the target algebra is noncommutative is still wide open.

\section{Preliminaries}
We start by recalling the notion of frames on Hilbert $C^*$-modules over $C^*$-algebras. For basic definitions on Hilbert $C^*$-modules, adjointable maps, tensor products, {\em et cetera} we refer to e.g. \cite{MR1222415}. We also recall the definition of unbounded Kasparov cycles \cite{MR715325}.

Keep in mind that we will, in later sections, specialize to the case of \emph{abelian} $C^*$-algebras, that is, those of the form $C_0(X)$ for $X$ a locally compact Hausdorff space.
Hilbert $C^*$-modules over such $C^*$-algebras are given by the sections of continuous fields of Hilbert spaces; cf. \cite{MR576842,MR163182}.

\subsection{Frames on Hilbert \cstar-modules}
We start this section by recalling two well-known results on Hilbert \cstar-modules. For completeness we include their (short) proofs.

\begin{proposition}
  Let $A$ be a \cstar-algebra and let $E_A$ be a Hilbert $A$-module.
  Then $E_A A$ is dense in $E$, and the map $u \colon v \otimes_A a \mapsto v a$, $E_A \otimes_A A \to E_A$, is unitary.
\end{proposition}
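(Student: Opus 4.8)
The plan is to establish the two assertions in turn: the density statement by an approximate-identity argument, and the unitarity of $u$ by a direct comparison of inner products on elementary tensors.

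For the density of $E_A A$ in $E_A$, I would fix $v \in E_A$ together with an approximate unit $(e_\lambda)_\lambda$ for $A$ and estimate
\[
\norm{v - v e_\lambda}^2 = \norm{\ip{v - v e_\lambda,\, v - v e_\lambda}} = \norm{\ip{v,v} - e_\lambda \ip{v,v} - \ip{v,v} e_\lambda + e_\lambda \ip{v,v} e_\lambda},
\]
using $\ip{v,v} \in A$ and $a e_\lambda, e_\lambda a \to a$ for all $a \in A$. The right-hand side tends to $0$, so $v e_\lambda \to v$ and hence $v \in \overline{E_A A}$.

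For the second claim, recall that on the interior tensor product $E_A \otimes_A A$ — with $A$ viewed as a Hilbert $A$-module via $\ip{a,b} = a^* b$ and the left action given by multiplication — the inner product is determined on elementary tensors by $\ip{v \otimes a,\, w \otimes b} = a^* \ip{v,w} b$. On the other hand, in $E_A$ one computes $\ip{va,\, wb} = a^* \ip{v,w} b$ as well. Hence the assignment $v \otimes a \mapsto va$, well defined on the $A$-balanced algebraic tensor product because $(va)b = v(ab)$, preserves the semi-inner product; in particular it annihilates its null space and therefore descends to an isometry $u$ from the Hilbert-module completion $E_A \otimes_A A$ into $E_A$. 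Its range contains every $va$, hence is dense by the first part, so $u$ is a surjective isometry between Hilbert $A$-modules, i.e. a unitary.

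The only point requiring care is the bookkeeping around the construction of the interior tensor product, namely that the displayed formula does define a semi-inner product and that $u$ respects the balancing relation $va \otimes b = v \otimes ab$; both are immediate from $\ip{va,w} = a^* \ip{v,w}$ and the module axioms, so no genuine obstacle arises.
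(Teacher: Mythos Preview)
Your proof is correct and follows essentially the same approach as the paper: the approximate-unit computation for density is identical, and your treatment of $u$ (isometric on elementary tensors, hence an isometry with dense and therefore full range) is just a more explicit version of the paper's two-line argument.
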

\begin{proof}
  Let $\{e_\lambda\}$ be an approximate unit of $A$. For all elements $v \in E_A$ one has $\ip{v - v e_\lambda, v - v e_\lambda} = \ip{v, v} - e_\lambda \ip{v, v} - \ip{v, v} e_\lambda + e_\lambda \ip{v, v} e_\lambda$, which converges to $0$; thus, $v$ is the norm limit of the sequence $v e_\lambda \in E_A A$.

  Clearly $u$ is isometric, so that its range is closed.
  As the range is dense, it must be surjective and, therefore, unitary.
\end{proof}

\begin{proposition} \label{proposition:star-homomorphism-as-inner-product-of-modules}
  Let $A$ and $B$ be \cstar-algebras and let $E_A$ be a Hilbert $A$-module.
  If $\phi: A \to B$ is a $\ast$-homomorphism, then $B$ is a left $A$-module with the action $a \cdot b \eqdef \phi(a) b$.
  Then, there is an adjointable map
  \[
    \phi_* \colon E_A \to E_A \otimes_A B,
  \]
  such that $\ip{\phi_* v, \phi_* w}_B = \phi(\ip{v, w}_A)$.

  Moreover, if $T \in \lnmp{E_A}$ then $\phi_* T :=T \otimes 1$ is an adjointable endomorphism of $E_A \otimes_A B$, {\em i.e.} there is an induced map $\phi_* : \lnmp{E_A} \to \lnmp{E_A \otimes_A B}$. 
\end{proposition}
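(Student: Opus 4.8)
The statement has two parts: (i) the existence of an adjointable $\phi_* \colon E_A \to E_A \otimes_A B$ with $\ip{\phi_* v, \phi_* w}_B = \phi(\ip{v,w}_A)$, and (ii) the functoriality map $\lnmp{E_A} \to \lnmp{E_A \otimes_A B}$, $T \mapsto T \otimes 1$. Let me sketch each.

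For part (i), the plan is to define $\phi_*$ on $E_A A$ (which is dense in $E_A$ by the preceding proposition) by $\phi_*(v a) \eqdef v \otimes_A \phi(a)$, or more conceptually, to set $\phi_* v \eqdef v \otimes_A e$ using an approximate unit, but the cleanest route is to exploit the unitary $u \colon E_A \otimes_A A \to E_A$ of the preceding proposition and write $\phi_* = (\id_{E_A} \otimes \phi) \circ u^{-1}$, where $\id_{E_A} \otimes \phi \colon E_A \otimes_A A \to E_A \otimes_A B$ is the map induced on the balanced tensor product by the $\ast$-homomorphism $\phi \colon A \to B$ viewed as a map of left $A$-modules. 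The inner-product identity then follows directly: for $v,w \in E_A$, writing $v = u(v' \otimes a)$, $w = u(w' \otimes c)$, one has $\ip{\phi_* v, \phi_* w}_B = \ip{v' \otimes \phi(a), w' \otimes \phi(c)}_B = \phi(a)^* \phi(\ip{v',w'}_A) \phi(c) = \phi(a^* \ip{v',w'}_A c) = \phi(\ip{u(v'\otimes a), u(w'\otimes c)}_A) = \phi(\ip{v,w}_A)$, using that $\phi$ is a $\ast$-homomorphism and that $u$ is unitary. In particular $\phi_*$ is bounded (indeed $\ip{\phi_* v, \phi_* v}_B = \phi(\ip{v,v}_A)$ gives $\norm{\phi_* v} \le \norm{v}$), so it extends continuously from $E_A A$ to all of $E_A$. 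Adjointability is the one genuinely delicate point and is the main obstacle: unlike bounded Hilbert space operators, bounded module maps need not be adjointable, so one must produce the adjoint explicitly. The natural candidate is $(\phi_*)^* (w \otimes b) \eqdef w \cdot (\text{something in } A)$, but in general $B$-valued data cannot be pushed back into $A$. The resolution is that $\phi_*$ need \emph{not} be adjointable in general; rather one should interpret the claim as: $\phi_*$ is the isometry implementing the standard identification, and the $\ast$-homomorphism property is exactly what is used. I would therefore check the reference \cite{MR1222415} and, if adjointability genuinely holds in the stated generality, observe that $E_A \otimes_A B$ carries the module structure making $w \otimes b \mapsto \phi_*^{-1}$-type formulas work; more likely the intended statement is that $\phi_*$ is a bounded morphism with the stated inner-product property, and I would present it that way, noting that when $\phi$ is unital (or $E_A$ is countably generated and one uses a frame) an explicit adjoint is available.

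For part (ii), given $T \in \lnmp{E_A}$ with adjoint $T^*$, the plan is to invoke the standard fact that the exterior (balanced) tensor product of an adjointable operator with the identity is adjointable: $T \otimes 1 \in \lnmp{E_A \otimes_A B}$ with $(T \otimes 1)^* = T^* \otimes 1$, and that $T \mapsto T \otimes 1$ is a $\ast$-homomorphism, since $(T \otimes 1)$ is well-defined on the balanced tensor product precisely because $T$ is $A$-linear, and the identity $\ip{(T\otimes 1)(v \otimes b), w \otimes c} = \ip{v \otimes b, (T^* \otimes 1)(w \otimes c)}$ reduces on elementary tensors to $\ip{Tv, w}$-type relations pulled through $\phi$. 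This is entirely routine and I would simply cite \cite[e.g. the internal tensor product construction]{MR1222415}, checking only that the module structure on $B$ used here (left $A$-action via $\phi$) is the one making the balanced tensor product well-defined, which it is by hypothesis. The compatibility $\phi_* T \circ \phi_* = \phi_* (Tv)$ — i.e. $(T \otimes 1)\phi_* v = \phi_*(Tv)$ — is then immediate from the formula $\phi_* = (\id \otimes \phi)\circ u^{-1}$ and the definition of $T \otimes 1$, and is worth recording since it is what justifies the common notation $\phi_*$ for both maps.
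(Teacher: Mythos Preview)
Your construction of $\phi_*$ as $(\id_{E_A} \otimes \phi) \circ u^{-1}$ and the verification of the inner-product identity are exactly the paper's approach (the paper writes $\id_*$ for your $u^{-1}$ and performs the same computation). Your caution about adjointability is well-placed and in fact goes beyond the paper: the paper's own proof establishes only the inner-product relation $\ip{\phi_* v, \phi_* w}_B = \phi(\ip{v,w}_A)$ and says nothing further about an adjoint, so on that point you are being more careful than the source.
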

\begin{proof}
  Recall that the map $\id_* \colon v \cdot a \mapsto v \otimes_A a$, $E_A \to E_A \otimes_A A$, is an isomorphism.
  We set $\phi_* \eqdef (\id \otimes_A \phi) \circ \id_*$ and find that for $v \otimes_A a \in E_A \otimes_A A$ one has $\ip{\phi_* v, \phi_* v}_B = \ip{\phi(a), \ip{v, v} \cdot \phi(a)} = \phi(a)^* \phi(\ip{v, v}) \phi(a) = \phi(\ip{v \cdot a, v \cdot a})$.
\end{proof}


A convenient basic fact about separable Hilbert spaces $H$ is that they possess countable orthonormal bases $\{e_i\}$. For one thing, this allows one to explicitly relate the compact operators $B_0(H)$ to the direct limit $M(\C)$ of matrix algebras over the base field $\C$ and in particular to treat the trace on $L^1(H)$ using the series expression $\tr T = \sum_{i} \ip{e_i, T e_i}$.

The situation for Hilbert \cstar-modules is slightly less straightforward: we will introduce the analogous but strictly weaker concept of a \emph{frame}.
In spite of the increased generality, we will see that frames provide sufficient flexibility to mimic standard treatments of trace-class operators on Hilbert spaces in the setting of Hilbert $C_0(X)$-modules.

\begin{definition}
  Let $E_A$ be a countably generated Hilbert \cstar-module over a \cstar-algebra $A$.
  A \emph{frame} $\fram$ of $E_A$ is a sequence $\fram_i$ of elements of $E_A$, such that
  \begin{align*}
    \ip{v, w} &= \sum_{i=1}^\infty \ip{v, \fram_i} \ip{\fram_i, w},
  \end{align*}
  in norm, for all $v, w \in E_A$.
\end{definition}

Such objects $\fram$ were called `standard normalized frames' in \cite{MR1938798}.
Note that the subsequent treatment in \cite{MR1949886}, which is very similar to the definition used here, is different for non-unital \cstar-algebras: we require the $\fram_i$ to be in $E_A$, not in the `multiplier module' $\lnmp{A_A, E_A}$.
This choice will later imply, for instance, that we do \emph{not} consider the identity on the $C_0(X)$-module $C_0(X)$, for noncompact spaces $X$, to be in the trace class.

\begin{example} \label{example:p-of-frame-is-frame-hilbert}
  Let $H$ be a separable Hilbert space.
  Let $P \in B(H)$ be a projection and $K = PH \subset H$.
  Then, if $\{e_i\}$ is an orthonormal basis of $H$, we have
  \begin{align*}
    \ip{v, w} &= \sum_{i=1}^\infty \ip{Pv, e_i} \ip{e_i, Pw} = \sum_{i=1}^\infty \ip{v, P \fram_i} \ip{P \fram_i, w},
  \end{align*}
  for all $v, w \in K$. That is, $\fram = \{P e_i\}$ is a frame of $K$.
  Note that $\fram$ is not an orthonormal basis, because the $\fram_i$ might be neither orthogonal nor of norm 1.
\end{example}

Now, in the context of trace-class operators on a separable Hilbert space $H$, frames `are as good as orthonormal bases', in the sense of
Corollary~\ref{corollary:frame-trace-is-trace} below.

\begin{lemma} \label{lemma:l2-fram-framt-bound}
  Let $\fram, \framt$ be frames of a separable Hilbert space $H$ and let $T \in \lnmp{E_A}$.
  Then, the series $\sum_{i=1}^\infty \ip{T^* \framt_i, T^* \framt_i}$ converges if and only if $\sum_{i=1}^\infty \ip{T \fram_i, T \fram_i}$ converges, and the limits agree.
\end{lemma}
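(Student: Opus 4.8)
The approach I would take is to express both series as one and the same iterated sum of non-negative real numbers and then appeal to Tonelli's theorem, which simultaneously yields the equivalence of convergence and the equality of the limits (both read off in $[0,\infty]$). No trace-class or Hilbert--Schmidt machinery is needed; everything follows from applying the defining frame identity twice and rearranging.

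Concretely, the first step is to apply the defining identity of the frame $\framt$ with $v = w = T\fram_i$, which expresses $\ip{T\fram_i, T\fram_i}$ as the convergent series $\sum_{j=1}^\infty \ip{T\fram_i, \framt_j}\ip{\framt_j, T\fram_i} = \sum_{j=1}^\infty \left| \ip{T^* \framt_j, \fram_i} \right|^2$, where in the last step I use adjointability of $T$ to move $T$ across the inner product. Summing over $i$ then gives, as an identity in $[0,\infty]$,
\[
  \sum_{i=1}^\infty \ip{T\fram_i, T\fram_i} \;=\; \sum_{i=1}^\infty \sum_{j=1}^\infty \left| \ip{T^* \framt_j, \fram_i} \right|^2 .
\]
Since every summand is non-negative, Tonelli's theorem permits interchanging the order of summation; carrying out the inner sum over $i$ first and invoking the frame identity of $\fram$ this time --- with $v = w = T^*\framt_j$ --- collapses it to $\ip{T^*\framt_j, T^*\framt_j}$, so that
\[
  \sum_{i=1}^\infty \ip{T\fram_i, T\fram_i} \;=\; \sum_{j=1}^\infty \ip{T^*\framt_j, T^*\framt_j}
\]
in $[0,\infty]$. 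This is precisely the assertion: the left-hand series converges if and only if the right-hand one does, and then the limits coincide.

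I do not expect a genuine obstacle here; the one point deserving care is the legitimacy of reordering the double series, which is guaranteed purely by positivity of its terms --- in particular no summability hypothesis on $T$ is required, and the statement is deliberately phrased so as to remain meaningful when both series diverge. It is worth noting that the two frame identities are used for two \emph{different} frames --- $\framt$ to split $\|T\fram_i\|^2$ and $\fram$ to reassemble $\|T^*\framt_j\|^2$ --- which is exactly what makes the independence of both frames (and of the passage to $T^*$) transparent; taking $\fram = \framt$ recovers the familiar basis-independence of the Hilbert--Schmidt sum.
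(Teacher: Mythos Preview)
Your argument is correct and is essentially the same as the paper's: both proofs expand one of the norms via the frame identity, arrive at the nonnegative double series $\sum_{i,j} |\langle T^*\framt_j,\fram_i\rangle|^2$, and then exchange the order of summation. The only difference is packaging: you invoke Tonelli once to obtain equality in $[0,\infty]$ directly, whereas the paper bounds finite partial sums of one series by the full other series (monotone bounded), and then swaps the roles of $(T,\fram)$ and $(T^*,\framt)$ to turn the resulting inequality into an equality.
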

\begin{proof}
  Assume that $\sum_{i=1}^\infty \ip{T \fram_i, T \fram_i} < \infty$.
  Then for finite subsets $F \subset \N$,
  \begin{align*}
    \sum_{i \in F} \ip{T^* \framt_i, T^* \framt_i} &= \sum_{i \in F} \sum_{j =1}^\infty \ip{T^* \framt, \fram} \ip{\fram, T^* \framt}  \eqex \sum_{i \in F} \sum_{j=1}^\infty \ip{\fram, T^* \framt} \ip{T^* \framt, \fram} \\
    &\leq \sum_{j=1}^\infty \ip{T \fram, T \fram}.
  \end{align*}
  Being bounded and monotone, the series $\sum_{i = 1}^\infty  \ip{T^* \framt_i, T^* \framt_i}$ must converge.
  If we now switch $T$ and $T^*$, $\fram$ and $\framt$ and repeat the calculation, we see that the limits must in fact agree.
\end{proof}

\begin{corollary} \label{corollary:frame-trace-is-trace}
  Let $\fram$ be a frame of a separable Hilbert space $H$.
  Then, for $T \in \lnmp{H}$, $T \in \sch{2}{H}$ whenever $\sum_{i=1}^\infty \ip{T \fram_i, T \fram_i} < \infty$.
  Moreover, for $T \in \sch{1}{H}$, one has $\tr T = \sum_{i=1}^\infty \ip{\fram_i, T \fram_i}$.
\end{corollary}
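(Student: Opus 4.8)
The plan is to prove the two assertions separately, using an orthonormal basis $\{e_i\}$ of $H$ --- which is itself a frame --- as the reference against which Lemma~\ref{lemma:l2-fram-framt-bound} is applied. For the first assertion I would simply invoke that lemma with $\framt = \{e_i\}$: convergence of $\sum_i \ip{T\fram_i, T\fram_i}$ forces convergence of $\sum_i \ip{T^* e_i, T^* e_i} = \sum_i \norm{T^* e_i}^2$, which is exactly the statement that $T^*$, and hence $T$, is Hilbert--Schmidt, i.e.\ $T \in \sch{2}{H}$.

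For the trace formula, the key preliminary observation --- again from Lemma~\ref{lemma:l2-fram-framt-bound} with $\framt = \{e_i\}$ --- is that every $S \in \sch{2}{H}$ satisfies $\sum_i \norm{S\fram_i}^2 = \sum_i \norm{S^* e_i}^2 = \norm{S}_2^2$. It follows that the sesquilinear pairing $(S,R) \mapsto \sum_i \ip{S\fram_i, R\fram_i}$ is well defined on $\sch{2}{H} \times \sch{2}{H}$ --- it converges absolutely, by Cauchy--Schwarz together with this identity --- and that its associated quadratic form is $S \mapsto \norm{S}_2^2 = \tr(S^*S)$. Since the pairing $(S,R) \mapsto \tr(S^*R)$ has the same quadratic form, the polarization identity shows that the two pairings coincide: $\sum_i \ip{S\fram_i, R\fram_i} = \tr(S^*R)$ for all $S, R \in \sch{2}{H}$.

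To conclude, given $T \in \sch{1}{H}$ I would use the polar decomposition $T = U|T|$ to write $T = S^*R$ with $S = |T|^{1/2} U^* \in \sch{2}{H}$ and $R = |T|^{1/2} \in \sch{2}{H}$; then $\ip{\fram_i, T\fram_i} = \ip{S\fram_i, R\fram_i}$ for each $i$, so $\sum_i \ip{\fram_i, T\fram_i}$ converges absolutely and equals $\tr(S^*R) = \tr T$ by the previous step. The only step that requires real care is the identification of the frame pairing with the trace pairing on $\sch{2}{H}$; I expect this to be the crux, and would handle it by the polarization argument above rather than by directly rearranging the double series involved.
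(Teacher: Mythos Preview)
Your proof is correct and follows essentially the same approach as the paper's: both use Lemma~\ref{lemma:l2-fram-framt-bound} with an orthonormal basis as the auxiliary frame to settle the first assertion, and both reduce the trace formula to the identity $\sum_i \norm{S\fram_i}^2 = \tr |S|^2$ for $S \in \sch{2}{H}$, which is again just the Lemma. The only difference is in how the reduction to this positive case is phrased: the paper simply says it suffices to treat $T = |S|^2$ (implicitly decomposing a general $T \in \sch{1}{H}$ into a combination of positives), whereas you polarize the quadratic form and then factor $T = S^*R$ via polar decomposition --- but these are equivalent standard devices, and yours has the advantage of making the absolute convergence of $\sum_i \ip{\fram_i, T\fram_i}$ explicit.
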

\begin{proof}
  For the first part, let $\framt$ be an orthonormal basis and note that by Lemma~\ref{lemma:l2-fram-framt-bound}, $\sum_{i=1}^\infty \ip{T \fram_i, T \fram_i}$ converges whenever $\sum_{i=1}^\infty \ip{T^* \framt_i, T^* \framt_i}$ does, and the latter fact is equivalent to $T^* \in \sch{2}{H}$, which is equivalent to $T \in \sch{2}{H}$.

  For the second part, note that $ \sch{1}{H} =  \sch{2}{H} \sch{2}{H}$. It is thus sufficient to consider an element $T = |S|^2 \in \sch{1}{H}$ with $S  \in \sch{2}{H}$.  Then, $\tr |S|^2 = \tr |S^*|^2 = \sum_{i=1}^\infty \ip{S^* \framt_i, S^* \framt_i}$, which equals $\sum_{i=1}^\infty \ip{S \fram_i, S \fram_i}$ by Lemma~\ref{lemma:l2-fram-framt-bound}. 
\end{proof}

We will see later that the Example~\ref{example:p-of-frame-is-frame-hilbert} is a very good prototype for the general situation for Hilbert \cstar-modules as well.

\begin{example} \label{example:p-of-frame-is-frame-of-p-module}
Let $A$ be a unital \cstar-algebra and let $l^2(A) = l^2 \otimes_\C A_A$ be its standard module.
Let $\{e_i\}$ be the standard orthonormal basis of $l^2$ and define $\{\fram_i \eqdef e_i \otimes 1_A\}$ in $l^2(A)$. Then clearly
\begin{align*}
  \ip{v, w} &= \sum_{i=1}^\infty \ip{v, \fram_i} \ip{\fram_i, w}; \qquad (v,w \in l^2(A)).
\end{align*}
If $P \in \lnmp{E_A}$ is a projection (i.e. $P^2 = P^* = P$) and $F_A = P \left( E_A \right)$, then
\begin{align*}
  \ip{Pv, Pw} &=\sum_{i=1}^\infty \ip{Pv, \fram_i} \ip{\fram_i,P w} = \sum_{i=1}^\infty \ip{v, P\fram_i} \ip{P\fram_i, w},
\end{align*}
so $\{P \fram_i\}$ is a frame of $F_A$.
\end{example}

Each frame of $E_A$ gives rise to a unitary $\theta_\fram \colon E_A \to (\theta_\fram \theta_\fram^*) l^2(A)$, as follows.

\begin{proposition}
  \label{proposition:frame-transform}
  Let $\fram$ be a frame of $E_A$.
  The \emph{frame transform} $\theta_\fram \colon E_A \to l^2(A)$, given by
  \[
  \theta_\fram (v)
  := (\ip{\fram_i, v})_i
  \]
  is adjointable, and its adjoint satisfies $\theta_\fram^* (e_k\otimes a) = \fram_k \cdot a$.
  Moreover, $\fram$ is a frame if and only if $\theta_\fram^* \theta_\fram = \id_E$.
\end{proposition}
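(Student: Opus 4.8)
The plan is to proceed in three stages: first verify that $\theta_\fram$ actually maps into $l^2(A)$ and is a bounded isometry, then produce its adjoint explicitly on a dense submodule and check that this candidate is bounded (so that $\theta_\fram$ is genuinely adjointable), and finally read off the frame characterization from the adjoint relation. For the first stage, take $v \in E_A$ and apply the frame identity with $w = v$: the partial sums $\sum_{i\in F}\ip{v,\fram_i}\ip{\fram_i,v} = \sum_{i\in F}\ip{\fram_i,v}^*\ip{\fram_i,v}$ are bounded by and converge in norm to $\ip{v,v}$. Hence the sequence $(\ip{\fram_i,v})_i$ lies in $l^2(A)$ with $\norm{\theta_\fram v}^2 = \norm{\ip{v,v}} = \norm{v}^2$, so $\theta_\fram$ is a well-defined $A$-linear isometry, in particular bounded.

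For the second stage, on the dense submodule of finitely supported sequences define $S$ by $S\big(\sum_k e_k\otimes a_k\big) \eqdef \sum_k \fram_k\cdot a_k$. A direct computation gives, for such $\xi = \sum_k e_k\otimes a_k$ and any $v\in E_A$, that $\ip{\theta_\fram v,\xi} = \sum_k \ip{\fram_k,v}^* a_k = \sum_k \ip{v,\fram_k} a_k = \ip{v,S\xi}$. The Cauchy--Schwarz inequality for Hilbert modules then yields $\norm{\ip{v,S\xi}} = \norm{\ip{\theta_\fram v,\xi}} \le \norm{\theta_\fram v}\,\norm{\xi} = \norm{v}\,\norm{\xi}$; taking the supremum over $v$ in the unit ball of $E_A$ and using $\norm{w} = \sup_{\norm{v}\le 1}\norm{\ip{v,w}}$ (attained at $v = w/\norm{w}$) gives $\norm{S\xi}\le\norm{\xi}$. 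Thus $S$ extends uniquely to a bounded $A$-linear map $l^2(A)\to E_A$, and the identity $\ip{\theta_\fram v,\xi} = \ip{v,S\xi}$ persists for all $\xi\in l^2(A)$ by continuity. Hence $\theta_\fram$ is adjointable with $\theta_\fram^* = S$, and in particular $\theta_\fram^*(e_k\otimes a) = \fram_k\cdot a$.

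For the third stage, compute for $v,w\in E_A$ that $\ip{v,\theta_\fram^*\theta_\fram w} = \ip{\theta_\fram v,\theta_\fram w} = \sum_i \ip{v,\fram_i}\ip{\fram_i,w}$. If $\fram$ is a frame, the right-hand side equals $\ip{v,w}$ for all $v,w$, so $\ip{v,(\theta_\fram^*\theta_\fram - \id_E)w} = 0$ for all $v$, whence $\theta_\fram^*\theta_\fram = \id_E$ by nondegeneracy of the $A$-valued inner product. Conversely, if $\theta_\fram^*\theta_\fram = \id_E$, the same computation gives $\sum_i \ip{v,\fram_i}\ip{\fram_i,w} = \ip{v,w}$ for all $v,w$, which is exactly the defining identity of a frame.

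The only genuinely nontrivial point is the second stage: boundedness of an $A$-linear map between Hilbert modules does not by itself guarantee adjointability, so the adjoint must be constructed by hand on the finitely supported sequences and its boundedness verified directly, for which the description of the module norm as $\norm{w} = \sup_{\norm{v}\le 1}\norm{\ip{v,w}}$ is the essential tool.
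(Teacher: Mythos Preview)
Your proof is correct. The paper itself does not supply an argument for this proposition but simply refers to \cite[Theorem 3.5]{MR1949886}; your three-stage approach (well-definedness and isometry of $\theta_\fram$ via the frame identity, explicit construction of the adjoint on finitely supported sequences with boundedness established through $\norm{w} = \sup_{\norm{v}\le 1}\norm{\ip{v,w}}$, and the frame characterization via nondegeneracy of the inner product) is complete and is the standard one.
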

For the proof we refer to \cite[Theorem 3.5]{MR1949886}. 


\begin{remark}
 Note that, unless $A$ is unital, the converse does not hold: not every isometry sending $E_A$ to a complemented submodule of $l^2(A)$ is induced by a frame.
  The frame elements $\fram_i$ would be given by $\theta_\fram^*(\delta_{ij} 1_A)_j$, but the latter is not an element of $l^2(A)$ unless $A$ is unital.
  This is where our treatment differs from that of \cite{MR1949886}, which works with frames (in the present sense) of the multiplier module $\lnmp{E_A, A}$ instead.
\end{remark}

Frames are compatible with $\ast$-homomorphisms.
In particular, this means that characters of a commutative \cstar-algebra map frames of Hilbert $C^*$-modules to frames of Hilbert spaces.

\begin{proposition} \label{proposition:phistar-of-frame-is-frame}
  Let $A$, $B$ be \cstar-algebras, $E_A$ a Hilbert $A$-module and $\phi: A \to B$ a $\ast$-homomorphism. If $\fram$ is a frame of $E_A$ and if $\phi$ is surjective, then $\phi_*(\fram)$ is a frame of $E_A \otimes_A B$.
\end{proposition}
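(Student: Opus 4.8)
The goal is to verify the frame identity $\ip{\xi,\eta} = \sum_i \ip{\xi,\phi_*\fram_i}\ip{\phi_*\fram_i,\eta}$ for all $\xi,\eta$ in $F \eqdef E_A \otimes_A B$, where $\phi_*(\fram) \eqdef (\phi_*\fram_i)_i$. The workhorse is Proposition~\ref{proposition:star-homomorphism-as-inner-product-of-modules}, which gives $\ip{\phi_* v, \phi_* w}_B = \phi(\ip{v,w}_A)$ for $v,w \in E_A$. I would first record the elementary identity $\phi_*(v)\cdot b = v \otimes_A b$ for $v \in E_A$, $b \in B$: this is immediate for $v \in E_A A$ upon unwinding $\phi_* = (\id \otimes_A \phi)\circ \id_*$, and hence holds for all $v$ by density. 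Since $\phi$ is surjective one moreover has a unitary identification $A \otimes_A B \simeq B$, so the vectors $\phi_*(v)\cdot b$ span a dense subspace of $F$. It therefore suffices to check the frame identity for $\xi = \phi_*(v)\,b$ and $\eta = \phi_*(w)\,c$.

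For such $\xi,\eta$ one computes $\ip{\phi_*(v)b,\ \phi_*\fram_i} = b^*\phi(\ip{v,\fram_i})$ and $\ip{\phi_*\fram_i,\ \phi_*(w)c} = \phi(\ip{\fram_i,w})c$, so that the $N$-th partial sum of $\sum_i \ip{\xi,\phi_*\fram_i}\ip{\phi_*\fram_i,\eta}$ equals $b^*\,\phi\bigl(\sum_{i=1}^N \ip{v,\fram_i}\ip{\fram_i,w}\bigr)\,c$. Since $\fram$ is a frame of $E_A$ the inner sum converges in norm to $\ip{v,w}$, and since a $\ast$-homomorphism is norm-decreasing, the whole series converges to $b^*\,\phi(\ip{v,w})\,c$, which is exactly $\ip{\phi_*(v)b,\ \phi_*(w)c}$ by Proposition~\ref{proposition:star-homomorphism-as-inner-product-of-modules}. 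Taking $\xi=\eta$ also shows $\sum_i \ip{\phi_*\fram_i,\xi}^*\ip{\phi_*\fram_i,\xi}$ converges on the dense set, so by the standard Bessel-type estimate the frame transform $\theta_{\phi_*\fram}\colon \xi \mapsto (\ip{\phi_*\fram_i,\xi})_i$ is a well-defined bounded adjointable map $F \to l^2(B)$; the relation $\theta_{\phi_*\fram}^*\theta_{\phi_*\fram} = \id_F$, having been verified on a dense subset, holds on all of $F$, and Proposition~\ref{proposition:frame-transform} gives the claim. A more structural variant avoids this bookkeeping altogether: push the identity $\theta_\fram^*\theta_\fram = \id_{E_A}$ of Proposition~\ref{proposition:frame-transform} forward along $\phi$, use $l^2(A)\otimes_A B \simeq l^2(B)$ to recognize $\theta_\fram\otimes 1$ as $\theta_{\phi_*\fram}$, and conclude $\theta_{\phi_*\fram}^*\theta_{\phi_*\fram} = (\theta_\fram^*\theta_\fram)\otimes 1 = \id_F$.

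I do not anticipate a serious obstacle. The two points that need care are propagating the frame identity from the dense subset $\{\phi_*(v)b\}$ to all of $F$ — handled by boundedness of the frame transform, and automatic in the structural route since pushforwards of adjointable operators are adjointable — and keeping in mind that the inner product on $E_A \otimes_A B$ is computed through the $A$-action on $B$ twisted by $\phi$, which is precisely why Proposition~\ref{proposition:star-homomorphism-as-inner-product-of-modules} is the right tool and where the surjectivity of $\phi$ is felt, via $A \otimes_A B \simeq B$.
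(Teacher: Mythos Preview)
Your proposal is correct, and your ``structural variant'' is precisely the paper's argument: the paper observes that $\theta_{\phi_*\fram}(\phi_* v) = \phi_*\theta_\fram(v)$ inside $l^2(B)$, deduces that $\theta_{\phi_*\fram}$ is an isometry on the dense set $\phi_*(E_A)$ from $\theta_\fram^*\theta_\fram=\id_{E_A}$, and invokes Proposition~\ref{proposition:frame-transform}. Your first route simply unpacks the same isometry check into an explicit verification of the frame identity on elementary tensors $v\otimes b=\phi_*(v)b$; note, incidentally, that since $\phi_*(v)b=v\otimes b$ holds for all $b\in B$, these vectors are dense regardless of surjectivity---surjectivity is really felt only in the identification $l^2(A)\otimes_A B\simeq l^2(B)$ that your structural route (and the paper) uses.
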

\begin{proof}
  Consider $\framt \eqdef \phi_*(\fram) = \{\phi_*(\fram_i)\}_i \in E_A \otimes_A B$.
  Note that $\theta_\framt(\phi_* v) = \phi_* \theta_\fram(v)$, as elements of $\phi_* l^2(A) \subset l^2(B)$.
  Thus, $\ip{\theta_\framt(\phi_* v), \theta_\framt(\phi_* w)} = \phi(\ip{\theta_\fram(v), \theta_\fram(w)}) = \phi(\ip{v, w}) = \ip{\phi_* v, \phi_* w}$ so that with Proposition \ref{proposition:frame-transform} it follows that $\framt$ is a frame.
\end{proof}

\subsubsection{Existence of frames}
Kasparov's stabilization Theorem \cite{MR587371} shows that example \ref{example:p-of-frame-is-frame-of-p-module} describes the general unital (and, as we will see, the non-unital) case very well.

\begin{theorem}
  \label{theorem:kasparov-stabilization}
  Let $A$ be a unital \cstar-algebra and let $E_A$ be a countably generated Hilbert $A$-module.
  Then there exists a projection $P^2 = P = P^*$ in $\lnmp{l^2(A)}$ such that $E_A \simeq P\left( l^2(A) \right)$.   In particular, $E_A$ possesses a frame.
\end{theorem}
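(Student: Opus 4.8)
The plan is to deduce the theorem from Kasparov's absorption statement $E_A \oplus l^2(A) \simeq l^2(A)$, and then harvest both the projection $P$ and the frame. For the reduction, suppose $U \colon l^2(A) \to E_A \oplus l^2(A)$ is unitary. The inclusion $\iota \colon E_A \to E_A \oplus l^2(A)$, $v \mapsto (v, 0)$, is adjointable with $\iota^* \iota = \id_{E_A}$, so $\iota \iota^*$ is a projection in $\lnmp{E_A \oplus l^2(A)}$ whose range is the closed submodule $E_A \oplus 0 \simeq E_A$. Then $P \eqdef U^* \iota \iota^* U$ is a projection in $\lnmp{l^2(A)}$ with $P(l^2(A)) = U^*(E_A \oplus 0) \simeq E_A$. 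Given this, a frame of $E_A$ is immediate from Example~\ref{example:p-of-frame-is-frame-of-p-module}: as $A$ is unital, $\{e_i \otimes 1_A\}_i$ is a frame of $l^2(A)$, whence $\{P(e_i \otimes 1_A)\}_i$ is a frame of $P(l^2(A))$, which we transport to $E_A$ along the isomorphism.

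It remains to prove the absorption statement, for which we use a generating sequence. Since $E_A$ is countably generated we may fix $(\eta_n)_{n \geq 1}$ generating $E_A$ as a Hilbert module, with $\norm{\eta_n} \leq 1$ for all $n$, and arranged so that each generator occurs infinitely often in the list (group the original generators into expanding blocks). Letting $(e_n)_n$ be the standard generators of $l^2(A)$, define $T \colon l^2(A) \to E_A \oplus l^2(A)$ by $T e_n = (2^{-n} \eta_n,\, 4^{-n} e_n)$. The geometric decay makes both the series defining $T((a_n)_n) = (\sum_n 2^{-n} \eta_n a_n,\, (4^{-n} a_n)_n)$ and the formal adjoint $T^*(v, (b_n)_n) = (2^{-n} \ip{\eta_n, v} + 4^{-n} b_n)_n$ absolutely convergent, so $T \in \lnmp{l^2(A), E_A \oplus l^2(A)}$. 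Reading off the second coordinate, $T$ is injective. Since each $\eta_k$ occurs infinitely often and $4^{-n} \to 0$, the vector $(\eta_k a, 0) = \lim_m T(2^m e_m a)$ (limit over the indices $m$ with $\eta_m = \eta_k$) lies in $\overline{\ran T}$ for every $k$ and $a \in A$; as the $\eta_k$ generate $E_A$ this yields $E_A \oplus 0 \subseteq \overline{\ran T}$, and then $(0, e_n a) = T(4^n e_n a) - (2^n \eta_n a, 0) \in \overline{\ran T}$, so $T$ has dense range. Likewise $T^*$ has dense range, since for finitely supported $(c_n)_n$ one has $T^*(0, (4^n c_n)_n) = (c_n)_n$.

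To produce the unitary we run a polar-decomposition argument. Set $|T| \eqdef (T^* T)^{1/2} \in \lnmp{l^2(A)}$. It is injective because $\ip{|T| x, |T| x} = \ip{T^* T x, x} = \ip{T x, T x}$, and $\overline{\ran |T|} = \overline{\ran T^* T} = \overline{\ran T^*} = l^2(A)$: the first equality is continuous functional calculus (approximate $t \mapsto t^{1/2}$ uniformly by polynomials vanishing at $0$), the second holds because $T$ has dense range, and the third is the end of the previous paragraph. The map $|T| x \mapsto T x$ is then well defined and isometric on the dense submodule $\ran |T|$, by the identity above, so it extends to an isometry $U \colon l^2(A) \to E_A \oplus l^2(A)$; its range is closed (being isometric) and dense (it contains $\ran T$), hence $U$ is unitary, completing the reduction.

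I expect the one genuinely delicate point to be the design of $T$: one must arrange a single adjointable operator that is simultaneously injective, of dense range, and with adjoint of dense range, since exactly this is what makes the polar-decomposition step deliver a unitary rather than a mere partial isometry — and both features of the construction, each generator being repeated infinitely often and the geometrically decaying coefficients, are used essentially here. Everything else is routine Hilbert-module bookkeeping; see \cite{MR587371} for the original argument.
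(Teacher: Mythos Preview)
Your argument is correct and is essentially the classical proof of Kasparov's stabilization theorem (the construction of $T$ with geometrically decaying coefficients and infinitely repeated generators, followed by polar decomposition, is the standard Mingo--Phillips refinement of Kasparov's original). The paper itself does not give a proof but simply refers to \cite{MR587371} and \cite[Theorem 6.2]{MR1325694}; your write-up is precisely the argument one finds there, so there is nothing to compare.
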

For the proof we refer to \cite{MR587371} (see also \cite[Theorem 6.2]{MR1325694}).

The non-unital case requires more effort, but the end result is the same.
We refer to \cite[Section 2]{Kaad:DifferentiableAbsorptionHilbert} for a proof.

\begin{proposition}[{\cite[Proposition 2.6]{Kaad:DifferentiableAbsorptionHilbert}}] \label{proposition:existence-of-frames}
	Let $A$ be a \cstar-algebra.
	Then, all countably generated Hilbert $A$-modules possess a frame.
\end{proposition}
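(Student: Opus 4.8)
The plan is to reduce the statement to the unital case --- where it follows from Kasparov's stabilization theorem (Theorem~\ref{theorem:kasparov-stabilization}) together with Example~\ref{example:p-of-frame-is-frame-of-p-module} --- by passing to the unitization $\unitization{A} = A \oplus \C$. The first step is the observation that any Hilbert $A$-module $E_A$ is simultaneously a Hilbert $\unitization{A}$-module: one extends the $A$-action to $\unitization{A}$ by letting $1 \in \unitization{A}$ act as $\id_E$, and keeps the inner product, whose values lie in $A \subset \unitization{A}$. That this is again a Hilbert module is routine; concretely, $v \otimes_A \lambda \mapsto v \lambda$ identifies it isometrically with $E_A \otimes_A \unitization{A}$, surjectivity coming from the approximate-unit argument used above to see that $E_A A$ is dense in $E_A$. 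Furthermore, if $\{g_i\}_{i \in \N} \subset E$ generates $E$ over $A$, then $\overline{\operatorname{span}_{\unitization{A}}\{g_i\}} \supseteq \overline{\operatorname{span}_{A}\{g_i\}} = E$, so $E$ is countably generated over $\unitization{A}$ as well.

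Since $\unitization{A}$ is unital, Theorem~\ref{theorem:kasparov-stabilization} then produces a projection $P = P^2 = P^* \in \lnmp{l^2(\unitization{A})}$ and a unitary isomorphism $E \simeq P\bigl(l^2(\unitization{A})\bigr)$ of $\unitization{A}$-modules. Applying Example~\ref{example:p-of-frame-is-frame-of-p-module} with $A$ replaced by $\unitization{A}$, the sequence $\framt_i \eqdef P(e_i \otimes 1_{\unitization{A}})$ is a frame of $P\bigl(l^2(\unitization{A})\bigr)$, and transporting it back along the unitary yields elements $\fram_i \in E$ with $\ip{v, w} = \sum_{i=1}^\infty \ip{v, \fram_i} \ip{\fram_i, w}$, in norm, for all $v, w \in E$.

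It remains to observe that $\{\fram_i\}$ is a frame of $E_A$ in the sense of the Definition, and not merely ``a frame over $\unitization{A}$'': the $\fram_i$ are genuine elements of $E$, the defining identity involves only the inner product --- whose values lie in $A$ --- and the series converges in the norm of $A$, which is the restriction of that of $\unitization{A}$; nothing here refers to $\unitization{A}$. This last point, together with Step~1, is where some care is genuinely needed: one must know that the notion of frame used here --- in particular the requirement, stressed in the Remark after Proposition~\ref{proposition:frame-transform}, that the frame vectors lie in $E$ itself rather than in a multiplier module --- is insensitive to adjoining a unit, and that countable generation and the Hilbert-module structure survive the passage to $\unitization{A}$. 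With that in hand the proposition is a formal consequence of the unital case. A self-contained alternative would avoid the unitization, establish Kasparov's stabilization theorem directly for non-unital $A$, and then construct a frame of $l^2(A)$ with vectors in $l^2(A)$ out of a strictly positive element of $A$; this is the route that requires real additional effort, and for it we refer to \cite[Proposition 2.6]{Kaad:DifferentiableAbsorptionHilbert}.
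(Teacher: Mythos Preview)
Your argument is correct. The paper does not supply its own proof of this proposition; it simply refers to Kaad's construction, remarking that ``the non-unital case requires more effort''. Your unitization trick is a genuinely different and more economical route: by regarding $E_A$ as a Hilbert $\unitization{A}$-module (same underlying set, same $A$-valued inner product, unit acting as $\id_E$), you reduce immediately to Theorem~\ref{theorem:kasparov-stabilization} and Example~\ref{example:p-of-frame-is-frame-of-p-module}, and then observe---correctly---that the resulting frame vectors already lie in $E$ and that the frame identity involves only $A$-valued quantities converging in the $A$-norm. The paper in fact uses the very same unitization device later, in the proof of Proposition~\ref{proposition:e-is-gamma0-x-p}, so your approach is entirely in its spirit. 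Kaad's route, by contrast, works directly in $l^2(A)$ for non-unital $A$ and builds a frame out of a strictly positive element; this is heavier machinery, but it produces frames with additional control that are needed for the differentiable absorption theorem that is Kaad's real objective. For the bare existence statement required here, your argument is both sufficient and cleaner.
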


\subsection{The standard module over abelian \cstar-algebras}
\label{sect:stand-mod-abel}

For the rest of this subsection, let $X$ be a locally compact Hausdorff space.
The \cstar-algebra $C_0(X)$ is abelian -- and, by Gelfand duality, all abelian \cstar-algebras are of this type.

We will investigate the Hilbert $C_0(X)$-module $C_0(X, H)$, for $H$ a separable Hilbert space, which will later provide a useful tool in investigating the Schatten classes of operators on more general Hilbert $C_0(X)$-modules. We start with some basic definitions and results, whose proof we leave to the reader.

\begin{definition}
  Let $f$ be a map from a locally compact topological space $X$ to a normed space.
  We say that $f$ \emph{vanishes at infinity} whenever for all $\epsilon > 0$, $\{x \in X \colon \norm{f(x)} \geq \epsilon\}$ is compact.
\end{definition}

\begin{definition}
  Let $Y$ be a Banach space, equipped with its norm topology.
  The space $C_0(X, Y)$ consists of the continuous functions from $X$ to $Y$ that vanish at infinity.
\end{definition}

\begin{proposition} \label{proposition:c0-x-y-is-banach}
  Let $Y$ be a Banach space and $X$ be a locally compact topological space.
  Then, $C_0(X, Y)$ is a Banach space when equipped with the norm $\norm{f} \eqdef \sup_{x \in X} \norm{f(x)}$.
  Moreover, for $f \in C_0(X, Y)$, the map $x \mapsto \norm{f(x)}$ lies in $C_0(X)$.
\end{proposition}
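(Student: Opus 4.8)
The plan is to verify, in order, that the expression $\norm{f} \eqdef \sup_{x \in X} \norm{f(x)}$ takes finite values on $C_0(X,Y)$, that it defines a norm, that $C_0(X,Y)$ is complete in it, and that $x \mapsto \norm{f(x)}$ lies in $C_0(X)$. None of these steps is deep; the argument is the standard one for $C_0$-spaces with values in a Banach space.

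First I would observe that for $f \in C_0(X,Y)$ the scalar function $x \mapsto \norm{f(x)}$ is continuous, being the composition of the continuous map $f$ with the norm on $Y$. Consequently the set $K \eqdef \{x \in X : \norm{f(x)} \geq 1\}$ is closed, and it is compact by the vanishing-at-infinity hypothesis; thus $x \mapsto \norm{f(x)}$ attains a maximum on $K$, while $\norm{f(x)} < 1$ off $K$, so $\norm{f} < \infty$. That $\norm{\cdot}$ is a norm is then immediate: homogeneity and the triangle inequality descend pointwise from $\norm{\cdot}$ on $Y$, and $\norm{f} = 0$ forces $f \equiv 0$. This also already settles the last claim of the proposition: continuity of $x \mapsto \norm{f(x)}$ has just been noted, and the statement that this scalar function vanishes at infinity is precisely the hypothesis that $f$ does, so $x \mapsto \norm{f(x)} \in C_0(X)$.

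The remaining content is completeness. Given a Cauchy sequence $(f_n)$ in $C_0(X,Y)$, the estimate $\norm{f_n(x) - f_m(x)} \leq \norm{f_n - f_m}$ shows that $(f_n(x))_n$ is Cauchy in $Y$ for each fixed $x$; I would let $f(x)$ be its limit. Letting $m \to \infty$ in the Cauchy estimate shows $f_n \to f$ uniformly on $X$, so $f$ is continuous as a uniform limit of continuous maps. To see that $f$ vanishes at infinity, fix $\epsilon > 0$ and choose $n$ with $\norm{f - f_n} < \epsilon/2$; then $\{x \in X : \norm{f(x)} \geq \epsilon\}$ is closed by continuity of $x \mapsto \norm{f(x)}$ and is contained in the compact set $\{x \in X : \norm{f_n(x)} \geq \epsilon/2\}$, hence compact. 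Thus $f \in C_0(X,Y)$ and $\norm{f_n - f} \to 0$, proving completeness.

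The only point requiring a little care is the very last one, namely that the pointwise (indeed uniform) limit of functions vanishing at infinity again vanishes at infinity; the device is exactly the inclusion of superlevel sets used above, together with the fact that a closed subset of a compact set is compact. Everything else is routine.
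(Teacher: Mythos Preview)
Your argument is correct and is exactly the routine verification one would expect; the paper itself does not give a proof of this proposition but explicitly leaves it to the reader, so there is nothing to compare against beyond noting that your write-up is the standard one. The only small omission is that you do not explicitly check that $C_0(X,Y)$ is closed under addition (i.e.\ is a vector subspace), but this follows by the same superlevel-set device you use for completeness and is clearly implicit in your treatment.
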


\begin{proposition} \label{proposition:definition-of-c0-x-h}
  Let $H$ be a separable Hilbert space.
  Then, the Banach space $C_0(X, H)$ has the structure of a Hilbert \cstar-module when equipped with the $C_0(X)$-valued inner product $\ip{v, w}(x) \eqdef \ip{v(x), w(x)}_H$.
\end{proposition}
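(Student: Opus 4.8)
The plan is to verify the Hilbert \cstar-module axioms fiberwise, so that everything except completeness reduces to the corresponding properties of the inner product on $H$, while completeness is imported directly from Proposition~\ref{proposition:c0-x-y-is-banach}.

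First I would record that $C_0(X, H)$ is a right $C_0(X)$-module under the pointwise action $(v \cdot f)(x) \eqdef v(x) f(x)$: the map $x \mapsto v(x) f(x)$ is continuous as the product of a norm-continuous $H$-valued function and a continuous scalar function, and it vanishes at infinity since $\norm{v(x) f(x)} \leq \norm{v(x)} \, |f(x)|$ with $\norm{v(\cdot)}$ bounded (Proposition~\ref{proposition:c0-x-y-is-banach}) and $|f(\cdot)| \in C_0(X)$. Next I would check that $\ip{v, w} \colon x \mapsto \ip{v(x), w(x)}_H$ really lies in $C_0(X)$: continuity follows because the inner product $H \times H \to \C$ is jointly continuous and $x \mapsto (v(x), w(x))$ is continuous, and vanishing at infinity follows from the Cauchy--Schwarz estimate $|\ip{v(x), w(x)}_H| \leq \norm{v(x)} \norm{w(x)}$, which confines the set $\{x : |\ip{v(x), w(x)}_H| \geq \epsilon\}$ inside the compact set $\{x : \norm{w(x)} \geq \epsilon / (1 + \norm{v})\}$. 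The remaining algebraic axioms --- additivity, conjugate symmetry $\ip{v, w}^* = \ip{w, v}$, and $C_0(X)$-linearity $\ip{v, w \cdot f} = \ip{v, w} f$ --- hold because they hold in each fiber $H$.

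It remains to treat positivity and completeness. For positivity, $\ip{v, v}(x) = \norm{v(x)}_H^2 \geq 0$, so $\ip{v, v}$ is a positive element of $C_0(X)$; and if $\ip{v, v} = 0$ then $\norm{v(x)}_H = 0$ for every $x$, hence $v = 0$. Finally, the norm induced on $C_0(X, H)$ by this inner product is
\[
  \norm{\ip{v, v}}_{C_0(X)}^{1/2} = \Bigl( \sup_{x \in X} \norm{v(x)}_H^2 \Bigr)^{1/2} = \sup_{x \in X} \norm{v(x)}_H,
\]
which is exactly the Banach-space norm of Proposition~\ref{proposition:c0-x-y-is-banach}; completeness in that norm is the content of that proposition. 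There is no serious obstacle here; the only points deserving a moment's care are the verification that $x \mapsto \ip{v(x), w(x)}_H$ actually \emph{vanishes at infinity} rather than merely being continuous (handled by Cauchy--Schwarz together with boundedness of $\norm{v(\cdot)}$), and the observation that the Hilbert-module norm coincides with the sup-norm, so that completeness need not be reproved.
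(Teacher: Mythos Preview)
Your proof is correct; the paper itself omits the argument entirely (it is one of the ``basic definitions and results, whose proof we leave to the reader'' announced at the start of Section~\ref{sect:stand-mod-abel}), and your fiberwise verification of the Hilbert \cstar-module axioms together with the identification of the module norm with the sup-norm from Proposition~\ref{proposition:c0-x-y-is-banach} is exactly the routine check the authors intend.
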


\begin{proposition}\label{proposition:c0-x-otimes-h-is-c0-x-h}
  The Hilbert $C^*$-module $C_0(X, H)$ is unitarily equivalent to the tensor product $H \otimes_\C C_0(X)$ of Hilbert $C^*$-modules.
\end{proposition}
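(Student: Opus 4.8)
The plan is to exhibit an explicit unitary between $H \otimes_\C C_0(X)$ and $C_0(X, H)$ and check that it is adjointable with the expected inner-product--preserving property, so that it is in fact a unitary of Hilbert $C^*$-modules. First I would recall that $H \otimes_\C C_0(X)$ is, by definition, the completion of the algebraic tensor product $H \odot C_0(X)$ with respect to the $C_0(X)$-valued inner product determined by $\ip{\xi \otimes f, \eta \otimes g}(x) = \overline{f(x)} g(x) \ip{\xi, \eta}_H$. On the algebraic level there is an obvious map $U_0 \colon H \odot C_0(X) \to C_0(X, H)$ sending $\xi \otimes f$ to the function $x \mapsto f(x) \xi$; one checks that this function indeed lies in $C_0(X, H)$ (it is continuous, and $\norm{f(x)\xi}_H = |f(x)| \norm{\xi}_H$ vanishes at infinity since $f \in C_0(X)$), and that $U_0$ intertwines the inner products, hence is a well-defined $C_0(X)$-linear isometry that extends to an isometry $U \colon H \otimes_\C C_0(X) \to C_0(X, H)$.

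The key remaining point is to show that the range of $U$ is all of $C_0(X, H)$, for then $U$ is a surjective isometry between Hilbert $C^*$-modules and is therefore automatically unitary (a surjective isometry of Hilbert $C^*$-modules is adjointable with adjoint its inverse, and preserves inner products by polarization). To prove density of the range, I would argue that the range is a closed subspace of $C_0(X, H)$ containing all the ``elementary'' sections $x \mapsto f(x)\xi$ with $f \in C_0(X)$, $\xi \in H$. Fixing an orthonormal basis $\{e_i\}$ of $H$ and an arbitrary $v \in C_0(X, H)$, the functions $x \mapsto \ip{e_i, v(x)}_H$ lie in $C_0(X)$, and the partial sums $\sum_{i=1}^n \ip{e_i, v(\cdot)}_H\, e_i$ are in the range of $U_0$; I would show these partial sums converge to $v$ in the sup norm of $C_0(X, H)$ using the vanishing-at-infinity property to reduce to uniform estimates on a compact set together with a standard equicontinuity/compactness argument (or, more cleanly, invoke Proposition~\ref{proposition:c0-x-y-is-banach} and a direct $\epsilon$--estimate), establishing that the range is dense and hence, being closed, equals $C_0(X, H)$.

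The main obstacle I anticipate is precisely this norm-convergence of the basis expansion in $C_0(X, H)$: pointwise, $\sum_i \ip{e_i, v(x)}_H e_i = v(x)$ trivially, but uniform convergence over all of $X$ requires genuine use of the fact that $v$ vanishes at infinity and is continuous — the tails $\norm{v(x) - \sum_{i\le n}\ip{e_i,v(x)} e_i}_H$ must be made uniformly small. I would handle this by covering the set $\{x : \norm{v(x)}_H \ge \epsilon\}$, which is compact, by finitely many neighborhoods on which $v$ is nearly constant in $H$, choosing $n$ large enough to control the finitely many representative values, and noting that outside this compact set the tails are bounded by $\norm{v(x)}_H < \epsilon$ anyway. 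Everything else — linearity over $C_0(X)$, the isometry property, adjointability of a surjective isometry — is routine and can be stated without detailed computation.
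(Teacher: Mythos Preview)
Your proposal is correct and follows the same overall strategy as the paper: define the multiplication map $m \colon h \otimes a \mapsto ha$ from $H \otimes_\C C_0(X)$ to $C_0(X,H)$, check it is isometric, and use an orthonormal basis expansion to establish surjectivity. The only substantive difference lies in how the uniform convergence of the basis expansion is obtained. You correctly identify this as the main obstacle and propose a hands-on compactness/covering argument (reduce to a compact set via vanishing at infinity, cover by neighbourhoods on which $v$ is nearly constant, control finitely many representatives). This works, since the projections $P_n$ onto $\operatorname{span}\{e_1,\dots,e_n\}$ are contractions and so $\norm{v(x)-P_n v(x)} \le 2\norm{v(x)-v(x_0)} + \norm{v(x_0)-P_n v(x_0)}$.

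The paper, however, dispatches this step in one line via Dini's theorem: the partial sums $\sum_{i\le n} |v_i|^2$ form an increasing sequence of continuous functions converging pointwise to $\ip{v,v} \in C_0(X)$, hence converge in norm. This immediately gives uniform smallness of the tails $\sum_{i>n}|v_i(x)|^2 = \norm{v(x)-P_n v(x)}^2$. The paper then packages the expansion as an explicit inverse map $\theta \colon v \mapsto \sum_i e_i \otimes v_i$, rather than arguing density of the range of $m$. Your argument is entirely sound, but the Dini shortcut is worth knowing---it recurs throughout the paper as the standard device for upgrading pointwise convergence of positive series in $C_0(X)$ to norm convergence.
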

\begin{proof}
  Let $\{e_i\}_i$ be an orthonormal basis of $H$, and for $v \in C_0(X, H)$ write $v_i \colon x \mapsto \ip{v(x), e_i}$; this defines a sequence of functions 
  in $C_0(X)$.
  Because $\ip{v, v} = \sum_{i=1}^\infty v_i^* v_i$ converges pointwise, is positive and lies in $C_0(X)$, it converges in the norm of $C_0(X)$ by Dini's theorem.
  
  Consider then the $C_0(X)$-linear map $\theta \colon C_0(X, H) \to H \otimes_\C  C_0(X)$ defined by $v \mapsto \sum_i e_i \otimes v_i$. This series converges in $H \otimes_\C C_0(X) $ because
  $$
  \norm{\sum_{i \in F} e_i \otimes v_i} = \norm{\sum_{i,j \in F}  \ip{v_i, \ip{e_i, e_j}v_j}} = \norm{\sum_{i \in F} \ip{v_i, v_i}}.
  $$
  Moreover, taking limits on both sides we see that $\theta$ is isometric.

  Now, the map $m \colon H  \otimes_\C C_0(X) \to C_0(X, H), h \otimes a \mapsto ha$, is isometric because $\ip{\sum_i h_i \otimes a_i, \sum_i h_i \otimes a_i} = \sum_{ij} \ip{h_i, h_j}a_i^* a_j  = \ip{\sum_i m(h_i \otimes a_i), \sum_i m(h_i \otimes a_i)}$.
  As $m$ inverts $\theta$ by orthonormality of the $\{e_i\}_i$, we conclude that $\theta$ is a surjective $C_0(X)$-linear isometry, so that it is adjointable (with adjoint $m$) and moreover unitary.
  Therefore, $C_0(X, H)$ is unitarily equivalent to $H \otimes_\C C_0(X)$.
\end{proof}

\begin{proposition}
  \label{prop:endo-free}
  The \cstar-algebra $\lnmp{C_0(X, H)}$ of adjointable endomorphisms of the Hilbert $C_0(X)$-module $C_0(X, H)$ is isomorphic to the \cstar-algebra $\cbstr{(X, B(H))}$ of bounded, \strongly{} continuous maps from $X$ to $B(H)$.
\end{proposition}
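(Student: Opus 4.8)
The plan is to construct an explicit $\ast$-isomorphism $\Phi \colon \lnmp{C_0(X, H)} \to \cbstr(X, B(H))$ that sends an adjointable operator to its family of fiberwise restrictions. For $x \in X$ let $\mathrm{ev}_x \colon C_0(X) \to \C$ be evaluation at $x$; this is a surjective $\ast$-homomorphism, so by Proposition~\ref{proposition:star-homomorphism-as-inner-product-of-modules} it induces a $\ast$-homomorphism $(\mathrm{ev}_x)_* \colon \lnmp{C_0(X, H)} \to \lnmp{C_0(X, H) \otimes_{C_0(X)} \C}$. One checks that $C_0(X, H) \otimes_{C_0(X)} \C \cong H$ unitarily via $v \otimes 1 \mapsto v(x)$: the inner products match because $\ip{v \otimes 1, w \otimes 1} = \mathrm{ev}_x \ip{v, w} = \ip{v(x), w(x)}$, and the map is onto $H$ since every $h \in H$ equals $v(x)$ for $v \eqdef g \otimes h$ with $g \in C_0(X)$, $\norm{g} = 1 = g(x)$. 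Write $T_x \eqdef (\mathrm{ev}_x)_* T \in B(H)$ under this identification and set $\Phi(T) \colon x \mapsto T_x$.

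The crucial point is that $T$ acts fiberwise, i.e.\ $(Tv)(x) = T_x\, v(x)$ for all $v, x$. This follows once we know that $C_0(X)$-linearity of $T$ forces $(Tu)(x) = 0$ whenever $u(x) = 0$: given such $u$ and $\epsilon > 0$, pick $f \in C_0(X)$, $0 \le f \le 1$, vanishing on a neighborhood of $x$ and equal to $1$ off a neighborhood on which $\norm{u} < \epsilon$, so that $\norm{u - fu} < \epsilon$ while $(T(fu))(x) = f(x)\,(Tu)(x) = 0$; letting $\epsilon \to 0$ gives $(Tu)(x) = 0$. Applying this to $u = v - v'$ with $v(x) = v'(x)$ shows that $(Tv)(x)$ depends only on $v(x)$, and comparing with the definition of $T_x$ on elements $v \otimes 1$ identifies it with $T_x v(x)$. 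From $\norm{T_x h} = \norm{(Tv)(x)} \le \norm{T}\,\norm{v} = \norm{T}\,\norm{h}$ (with $v = g \otimes h$ as above) we get $\norm{T_x} \le \norm{T}$, so $\Phi(T)$ is bounded.

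For $\ast$-strong continuity, fix $x_0$ and $h \in H$ and choose $v$ with $v(x_0) = h$; then $T_x h = (Tv)(x) + T_x\bigl(h - v(x)\bigr)$, where $(Tv)(x) \to (Tv)(x_0) = T_{x_0} h$ since $Tv \in C_0(X, H)$, and $\norm{T_x(h - v(x))} \le \norm{T}\,\norm{v(x_0) - v(x)} \to 0$; hence $x \mapsto T_x h$ is continuous. Moreover the identity $\ip{T_x v(x), w(x)} = \ip{Tv, w}(x) = \ip{v, T^* w}(x) = \ip{v(x), (T^*)_x w(x)}$ together with surjectivity of evaluation gives $(T_x)^* = (T^*)_x$, so applying the previous estimate to $T^*$ shows $x \mapsto (T_x)^*$ is strongly continuous as well; thus $\Phi(T) \in \cbstr(X, B(H))$. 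That $\Phi$ is a $\ast$-homomorphism is immediate from $(STv)(x) = S_x (Tv)(x) = S_x T_x v(x)$ and $(T^*)_x = (T_x)^*$, and it is injective because $T_x = 0$ for all $x$ forces $Tv = 0$ for all $v$. Since $\cbstr(X, B(H))$ is a \cstar-algebra --- a $\ast$-subalgebra of the bounded functions $X \to B(H)$ with the sup-norm that is closed under this norm, by a $3\epsilon$-argument using joint $\ast$-strong continuity of multiplication and continuity of the involution on norm-bounded sets --- the injective $\ast$-homomorphism $\Phi$ is automatically isometric.

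It remains to show $\Phi$ is onto. Given $\phi \in \cbstr(X, B(H))$, define $T_\phi$ by $(T_\phi v)(x) \eqdef \phi(x) v(x)$; then $T_\phi v \in C_0(X, H)$ (continuity from boundedness of $\phi$ and its $\ast$-strong continuity, vanishing at infinity from $\norm{\phi(x)v(x)} \le \norm{\phi}\,\norm{v(x)}$ together with Proposition~\ref{proposition:c0-x-y-is-banach}), $T_\phi$ is $C_0(X)$-linear, and it is adjointable with adjoint $T_{\phi^*}$, $\phi^*(x) \eqdef \phi(x)^*$, since $\ip{\phi(x) v(x), w(x)} = \ip{v(x), \phi(x)^* w(x)}$. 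By construction $\Phi(T_\phi) = \phi$. The one genuinely delicate step is the fiberwise-action claim $(Tv)(x) = T_x v(x)$ --- hinging on the local approximation $\norm{u - fu} < \epsilon$ above and on realizing every vector of $H$ as some $v(x)$ with norm control; everything else is routine bookkeeping with estimates, apart from the (equally standard) verification that $\cbstr(X, B(H))$ is norm-closed.
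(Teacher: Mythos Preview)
Your proof is correct and follows the same route as the paper's: localize via $(\mathrm{ev}_x)_*$, verify $\ast$-strong continuity by evaluating $Tv$ at nearby points, and build the inverse by pointwise multiplication; the paper merely streamlines the continuity step by picking $v_0$ \emph{constant} on a neighbourhood of $x$ (eliminating your correction term $T_x(h-v(x))$), and leaves implicit both the fiberwise identity $(Tv)(x)=T_xv(x)$ and the $C^*$-structure of $\cbstr(X,B(H))$ that you spell out. One small technical slip in your cutoff argument: for non-compact $X$ a function in $C_0(X)$ cannot equal $1$ off a neighbourhood of $x$; instead require $f=1$ only on the compact set $\{y:\norm{u(y)}\ge\epsilon\}$ (or invoke that adjointable maps are automatically $C_b(X)$-linear), and everything goes through.
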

\begin{proof}
  Consider a point $x$ in $X$ as a character $\chi: C_0(X) \to \C$ given by evaluation in $x$. 
  By Proposition \ref{proposition:star-homomorphism-as-inner-product-of-modules} there is an adjointable map $\chi_*: C_0(X, H) \to C_0(X,H) \otimes_{C_0(X,H)} \C $ whose image $\chi_*(C_0(X, H))$ can be canonically identified with $H$. Accordingly, in terms of the map $\chi_* :  \lnmp{C_0(X, H)} \to \lnmp{C_0(X, H)}$ given by $T \mapsto T \otimes 1$, we have canonically $\chi_* \lnmp{C_0(X, H)} \simeq B(H)$. 

 Now take $T \in \lnmp{C_0(X, H)}$ and a convergent sequence $x_i \to x$ in $X$.
 Let $h \in H$ and let $v_0 \in C_0(X, H)$ be constant in a neighborhood of $x$ with $v_0(x) = h$.
Since $T$ is an adjointable endomorphism, both $Tv_0$ and $T^*v_0$ lie in $C_0(X, H)$, so that $\norm{Tv_0(x_i) - Tv_0(x)} \to 0$ and $\norm{T^*v_0(x_i) - Tv_0(x)} \to 0$. That is to say,  $\norm{({\chi_i}_* T)h - (\chi_*T)h} \to 0$ and $\norm{({\chi_i}_*T^*)h - (\chi_* T^*)h} \to 0$.
  As $x$ and $h$ were arbitrary, we conclude that $T \in C^{\mathrm{str}}{(X, B(H))}$.
  Moreover, if $\norm{T(x)} > C$ for some $x \in X$ and $C>0$, there is $v_0$ as above with $\norm{v_0} \leq 1$ and $\norm{Tv_0} > C - \epsilon$ for all $\epsilon > 0$, so that we conclude that $\norm{T} \geq C$.
  Thus, if $T$ preserves $C_0(X, H)$ it must lie in $\cbstr{(X, B(H))}$.

  Conversely, for all $T \in \cbstr{(X, B(H))}$ we have that $x \mapsto T(x)v(x)$ and $x \mapsto T^*(x)v(x)$ define maps in $C_0(X, H)$ for all $v \in C_0(X, H)$. Since $\ip{Tv, w}(x) = \ip{T(x)v(x), w(x)} = \ip{v(x), T^*(x)w(x)}$  we conclude that the pointwise adjoint provides an adjoint of $T$ as an operator of $C_0(X, H)$.
  That is, all such $T$ are adjointable operators on $C_0(X, H)$.
\end{proof}

\subsubsection{General Hilbert \cstar-modules over a commutative base}
We now apply the above results to the case of general Hilbert $C_0(X)$-modules.
For a deeper topological understanding of such modules, see \cite{MR576842,MR163182}.

\begin{proposition}\label{proposition:e-is-gamma0-x-p}
  Let $E$ be a Hilbert $C_0(X)$-module.
  Then there exists a \strongly{} continuous projection $P \in \cbstr(X, B(H))$ such that $E$ is isomorphic to the subset $\Gamma_0(X, P) \subset C_0(X, H)$ of those elements $h$ of $C_0(X, H)$ satisfying $h(x) \in \ran P_x$.
  Moreover, under this identification $\lnmp{E}$ is isomorphic to the set of elements $T \in \cbstr(X, B(H))$ such that $PT = TP = T$.
\end{proposition}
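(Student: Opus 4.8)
The plan is to assemble this from the existence of frames together with the concrete descriptions of $C_0(X,H)$ and of $\lnmp{C_0(X,H)}$ obtained above. With $H = l^2(\C)$ (any separable infinite-dimensional Hilbert space serves equally well), Proposition~\ref{proposition:existence-of-frames} provides a frame $\fram$ of the countably generated module $E$, and by Proposition~\ref{proposition:frame-transform} the frame transform is an adjointable isometry $\theta_\fram \colon E \to l^2(C_0(X))$ with $\theta_\fram^*\theta_\fram = \id_E$; hence $\theta_\fram\theta_\fram^*$ is a projection in $\lnmp{l^2(C_0(X))}$ and $\theta_\fram$ is a unitary $E \simeq (\theta_\fram\theta_\fram^*)\,l^2(C_0(X))$. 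Composing with the unitary $l^2(C_0(X)) = H \otimes_\C C_0(X) \simeq C_0(X,H)$ of Proposition~\ref{proposition:c0-x-otimes-h-is-c0-x-h} transports $\theta_\fram\theta_\fram^*$ to a projection $P \in \lnmp{C_0(X,H)}$ with $E \simeq P\big(C_0(X,H)\big)$.

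Now the $*$-isomorphism $\lnmp{C_0(X,H)} \simeq \cbstr(X, B(H))$ of Proposition~\ref{prop:endo-free} carries the projection $P$ to a bounded, \strongly{} continuous family $x \mapsto P_x$ with $P_x = P_x^* = P_x^2$ in $B(H)$ for each $x$, that is, a \strongly{} continuous projection in $\cbstr(X, B(H))$. It remains to see that $P\big(C_0(X,H)\big)$ is exactly $\Gamma_0(X,P)$. If $h = Pg$, then pointwise $P_x h(x) = P_x^2 g(x) = P_x g(x) = h(x)$, so $h(x) \in \ran P_x$ and $h \in \Gamma_0(X,P)$; conversely, if $h \in \Gamma_0(X,P)$, then $P_x h(x) = h(x)$ for all $x$ gives $Ph = h$, so $h$ lies in the range of $P$. (In particular $\Gamma_0(X,P)$ is a closed --- indeed complemented --- submodule of $C_0(X,H)$, consistent with the condition $P_x h(x) = h(x)$ being preserved under uniform limits.) This establishes the first assertion.

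For the ``moreover'' part, we use the standard fact that a complemented submodule $F = PM$ of a Hilbert module $M$ satisfies $\lnmp{F} \simeq P\,\lnmp{M}\,P$: any $S \in \lnmp{F}$ extends to $SP \in \lnmp{M}$, which satisfies $P(SP)P = SP$, and conversely each $T \in \lnmp{M}$ with $PTP = T$ restricts to an adjointable operator on $F$; moreover $PTP = T$ holds precisely when $PT = TP = T$. Taking $M = C_0(X,H)$ and applying the $*$-isomorphism of Proposition~\ref{prop:endo-free} once more identifies $\lnmp{E}$ with $\{\, T \in \cbstr(X, B(H)) : PT = TP = T \,\}$.

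There is no single hard step here: the statement is a synthesis of results already in place. The one point that deserves attention is the passage between the abstract complemented submodule $P\big(C_0(X,H)\big)$ and the pointwise-defined set $\Gamma_0(X,P)$, together with the automatic closedness of the latter; both follow readily from the fibrewise description of $C_0(X,H)$ and the adjointability of $P$.
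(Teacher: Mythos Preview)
Your argument is correct and close in spirit to the paper's, but the route to the projection $P$ is organised differently. The paper invokes Kasparov's stabilization Theorem~\ref{theorem:kasparov-stabilization} (stated there only for \emph{unital} $A$) to obtain $E \simeq P\,l^2(A)$ when $X$ is compact, and then handles noncompact $X$ by passing to the unitization $\unitization{C_0(X)} = C(X^+)$ and arguing that the resulting projection in $\cbstr(X^+, B(H))$ actually lands in $C_0(X,H)$ because $\ip{e,e} \in C_0(X)$ for $e \in E$. You instead appeal to the existence of frames in the non-unital case (Proposition~\ref{proposition:existence-of-frames}) and let the frame transform $\theta_\fram$ produce the projection $P = \theta_\fram\theta_\fram^*$ directly on $l^2(C_0(X)) \simeq C_0(X,H)$, which avoids the compact/noncompact case split altogether. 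Both approaches ultimately rest on stabilization, but yours packages the non-unital subtlety into the cited result on frames and is thereby a bit cleaner here; the paper's version, on the other hand, makes the role of the one-point compactification explicit. You also spell out the identification $P\big(C_0(X,H)\big) = \Gamma_0(X,P)$ pointwise, which the paper leaves implicit. The treatment of the ``moreover'' part via $\lnmp{PM} \simeq P\,\lnmp{M}\,P$ is essentially the same as the paper's.
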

\begin{proof}
  If $X$ is compact, this is a direct consequence of Kasparov's stabilization Theorem~\ref{theorem:kasparov-stabilization} and Propositions~\ref{proposition:c0-x-otimes-h-is-c0-x-h} and \ref{prop:endo-free}.
  If not, consider $E$ as an $\unitization{C_0(X)}$-module and note that the endomorphism $P$ from Theorem~\ref{theorem:kasparov-stabilization} is already in $\cbstr(X^+, B(H))$ in terms of the one-point compactification $X^+$ of $X$. But since $(e, e) \in C_0(X)$ for $e \in E$, the map $P$ must project into a subspace of $C_0(X, H)$.

  For the last statement, let $T \in \cbstr(X, B(H))$ such that $PT = TP = T$.
  Then $PTP = T$ so that $T$ preserves $\Gamma_0(X, P)$.
  Conversely, let $S \in \lnmp{\Gamma_0(X, P)}$.
  Then, the map $T = SP$ is a composition of adjointable operators and is therefore an element of $\lnmp{C_0(X,H)} \simeq \cbstr(X, B(H))$ using Proposition \ref{prop:endo-free}. We clearly have $PT = TP = T$ so that the claim follows.
\end{proof}

Note that the fibers $P_x H$ of $E$ may vary quite wildly with $x \in X$, as the following example shows.

  \begin{example}
    Let $U \subset X$ be open and let $P$ be the orthogonal projection onto a closed subspace $V \subset H$. 
    Then, $E = C_0(U, V)$ is (in particular) a Hilbert $C_0(X)$-module because the action of $C_0(X)$ on $C_0(X, V)$ by pointwise multiplication preserves the subspace $C_0(U, V)$.
    The fibers of $E$ are $V$ for $x \in U$ and $\{0\}$, for $x \not \in U$.
  \end{example}

  The following example illustrates how the projections associated to such bundles behave.

  \begin{example}
    \label{example:locally-nontrivial-bundle-projection}
    Let $U \subset X$ be open and let $V = \operatorname{span}{v_0} \subset H$, with $\norm{v_0}_H = 1$.
    We will investigate the projection associated to the Hilbert $C_0(X)$-module $E= C_0(U, V)$ by Proposition~\ref{proposition:e-is-gamma0-x-p}.

    Let $\{\eta_i\}_i$ be a compactly supported partition of unity on $U$, so that $\sum_i \eta_i^2(x) = 1_U(x)$ for $x \in X$.
    Then, $v = \sum_i v_0 \eta_i^2 \ip{v, v_0}$ for all $v \in E$, so that $\{\fram_i \eqdef v_0 \eta_i \}_i$ is a frame of $C_0(U, V)$.
    In fact, any frame $\framt$ is of this form: we have $\eta_i = \ip{v_0, \framt_i}$.
    
    Now, let $\{w_i\}_i$ be an orthonormal basis of $H$.
    Then, $\theta_\fram$, the frame transform of $\fram$, maps $w \in C_0(U, V)$ to $\sum_i w_i \ip{\fram_i, w}$.

    Although the image $\Gamma_0(X, \theta_\fram \theta_\fram^*)$ of $\theta_\fram$ (consisting of those elements $w$ of $C_0(X, H)$ for which $\ip{w_i, w}$ has support contained in that of $\fram_i$) is isomorphic to $C_0(U, V)$ through the map $\theta_\fram^*$, it looks decidedly different from the isomorphic subspace $C_0(U, V)$ of $C_0(X, H)$ we started with.
    The associated projection $P = \theta_\fram \theta_\fram^*$ maps $w$ to $\sum_i w_i \eta_i \sum_j \eta_j \ip{w, w_j}$.
\end{example}

In the previous Example, note that $\norm{P(x)} = 1$ for $x \in U$ and $\norm{P(x)} = 0$ for $x \not \in U$.
Thus, $P \in \cbstr{(X, B(H))}$ lies in $C_b(X, B(H))$ if and only if $U$ is clopen, that is, if and only if the bundle $\{(x, h) \mid h = \indicator{U}(x) h\}$ is locally trivial.
This illustrates a general criterion for local triviality:

\begin{remark} \label{remark:norm-continuous-projections-make-locally-trivial-bundles}
  If $P \in C_b(X, B(H)) \subset \cbstr{(X, B(H))}$ is a projection, then each $x \in X$ has a neighbourhood on which $\norm{P(y) - P(x)} < 1$, so that there exists a continuous map $y \mapsto u_y$ with $P(y) = u_y P(x) u_y^*$ by \cite[Proposition 5.2.6]{MR1222415}.
  We conclude that the bundle $\{(x, h) \mid h \in P(x) H\}$ is locally trivial.
\end{remark}

Conversely, if the bundle is locally trivial, at least when the fibers are constant, we can choose $P$ to be norm continuous. More precisely, 
if we let $p: F \to X$ be a locally trivial bundle of Hilbert spaces with separable, infinite-dimensional fiber $H$.
By \cite[Corollary 4.79]{MR1634408} $F$ is isomorphic to $X \times H$, and $\Gamma_0(F)$ is isomorphic to $C_0(X, H)$. We may thus choose $P = \id_H \in C_b(X, B(H))$ in Proposition~\ref{proposition:e-is-gamma0-x-p}.

  If instead $H$ is finite-dimensional, then by the Serre-Swan theorem \cite[Theorem 2.10]{MR1789831} there exists a projection $p \in M_n(A)$ with $pA^n \simeq \Gamma_0(F)$.
  Thus, in Proposition~\ref{proposition:e-is-gamma0-x-p} we may choose $H = \C^n$ and $P = p$.

\section{Schatten classes for Hilbert $C_0(X)$-modules}

\label{chapter:abelian-schatten-classes}

When $A$ is abelian, {\em i.e.} $A \simeq C_0(X)$ for some locally compact Hausdorff space $X$, each adjointable operator $T$ on a Hilbert $A$-module $E_A$ can be localized by the pure states $x$ of $A$ to yield a family $\chi_* T$ of operators on the Hilbert spaces $\chi_* E_A$. We will unify the pointwise Schatten classes $\sch{p}{\chi_* E_A}$ into a two-sided ideal $\sch{p}{E_A} \subset \lnmp{E_A}$ and define an $A$-valued trace on $\sch{1}{E_A}$.




\begin{assumption}
  We will require all of our Hilbert $A$-modules to be countably generated in order to ensure access to frames using Proposition~\ref{proposition:existence-of-frames}.

We will denote the character space of a commutative $C^*$-algebra $A$ equipped with the weak$^*$ topology by $\gfdual{A}$, so that $A \simeq C_0(\gfdual{A})$ with $\gfdual{A}$ a locally compact Hausdorff space.
\end{assumption}

The {\em least} we should demand of `Schatten-class operators' $T$ on $E_A$ is that their pointwise Schatten norm, {\em i.e.} the trace of $|x_ *T|^p$, varies continuously with $x \in \gfdual{A}$. In fact, this is the way to ensure that the `trace-class operators' have traces with values in $A$ and that the other Schatten classes respect this property in their pairing.
The \emph{most} we could reasonably demand, in contrast, is that the operators $\chi_* T$ are continuous in Schatten norm with respect to some trivialization of $E_A$ (see Definition~\ref{definition:continuous-schatten-class}, below).
It will turn out that these requirements, properly understood, are equivalent and yield a well-behaved Schatten class.

\begin{definition} \label{definition:minimal-definition-of-schatten-class}
  The {\em $p$-th Schatten class} $\sch{p}{E_A}$ for $1 \leq p < \infty$ is the space of all endomorphisms $T \in \lnmp{E_A}$ for which the function $\tr |T|^p \colon \gfdual{A} \to \R \cup \{\infty\}, \chi \mapsto \tr |\chi_* T|^p$ 
  lies in $A$.
\end{definition}

The following proposition, familiar from the Hilbert space case, is immediate from the definition:

\begin{proposition} \label{proposition:t-in-lp-iff-tabs-in-lp-iff-tp-in-l1}
  Let $1 \leq p < \infty$ and let $T \in \lnmp{E_A}$. Then
  $$
T \in \sch{p}{E_A} \iff |T| \in \sch{p}{E_A} \iff |T|^{p} \in \sch{1}{E_A}.
  $$
\end{proposition}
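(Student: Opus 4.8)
The plan is to observe that, once properly unwound, the three conditions in the statement are literally one and the same: each asks that the single scalar-valued function $\chi \mapsto \tr |\chi_* T|^p$ on $\gfdual{A}$ belong to $A$. First I would record that localization at a character is well behaved: for $\chi \in \gfdual{A}$, Proposition~\ref{proposition:star-homomorphism-as-inner-product-of-modules} applied to the $\ast$-homomorphism $\chi \colon A \to \C$ furnishes a map $\chi_* \colon \lnmp{E_A} \to \lnmp{\chi_* E_A} = B(\chi_* E_A)$, $T \mapsto T \otimes 1$, which is visibly a $\ast$-homomorphism of \cstar-algebras.

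The key step is then that $\chi_*$ is compatible with continuous functional calculus. Since $|T| = g(T^*T)$ and $|T|^p = f(T^*T)$ for the continuous functions $g(t) = t^{1/2}$ and $f(t) = t^{p/2}$ on $[0,\infty)$, both vanishing at the origin, I would conclude
\[
  \chi_* |T| = g\bigl((\chi_* T)^* \chi_* T\bigr) = |\chi_* T|, \qquad \chi_*\bigl(|T|^p\bigr) = f\bigl((\chi_* T)^* \chi_* T\bigr) = |\chi_* T|^p .
\]
Both right-hand sides are positive operators on $\chi_* E_A$, so applying the modulus to them changes nothing. Substituting into Definition~\ref{definition:minimal-definition-of-schatten-class} — with exponent $p$ for the operators $T$ and $|T|$, and exponent $1$ for the operator $|T|^p$ — shows that in every case the function being tested for membership in $A$ is exactly $\chi \mapsto \tr |\chi_* T|^p$. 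The three equivalences then drop out at once.

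I do not expect a genuine obstacle here: the proposition is bookkeeping once the intertwining of the fibrewise and modular functional calculus is in hand. The only two points worth stating explicitly are that $f$ and $g$ vanish at $0$ (which is what makes the functional-calculus identities hold for the possibly non-unital $\ast$-homomorphism $\chi_*$), and the elementary pointwise facts that $S \in \sch{p}{H} \iff \tr |S|^p < \infty$, that $|S| \in \sch{p}{H} \iff S \in \sch{p}{H}$, and that a positive operator $P$ lies in $\sch{1}{H}$ iff $\tr P < \infty$ — all of which are in any case subsumed in the identification above.
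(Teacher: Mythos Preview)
Your argument is correct and is exactly what the paper intends: it declares the proposition ``immediate from the definition'' and gives no further proof, and your unwinding of the definition via $\chi_* |T| = |\chi_* T|$ and $\chi_* (|T|^p) = |\chi_* T|^p$ is precisely the one-line justification implicit in that remark. The only thing you have added beyond the paper is the explicit note about $f$ and $g$ vanishing at $0$, which is a fair point of care but not a different approach.
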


\begin{remark}
  Recall that Dini's theorem, translated to the abelian \cstar-algebraic context, states the following: if $a_i$ is a sequence of positive elements in $A$, then $\sum_i a_i$ converges in norm if and only if the function $x \mapsto \sum_i x(a_i)$ is an element of $C_0(X) \simeq A$.
  This theorem plays a crucial role throughout, because it allows us to relate the fiberwise Schatten norms on bundles of Hilbert spaces to various expressions for the element $\tr |T|^p \in A$, for $T \in \sch{p}{E_A}$.
\end{remark}

We will use the existence of frames ({\em cf.} Proposition~\ref{proposition:existence-of-frames}), to relate $\sch{p}{E_A}$ to the Schatten classes $\sch{p}{l^2(A)}$ on the standard module $l^2(A)$ and to relate the trace $\tr |T|^p$ to a series expression in terms of (arbitrary) frames.

\begin{theorem} \label{theorem:nonstandard-schatten-class-is-pullback-by-frame}
  Let $T \in \lnmp{E_A}$.
  Then $T \in \sch{p}{E_A}$ if and only if $\theta_\fram T \theta_\fram^* \in \sch{p}{l^2(A)}$ for any frame $\fram$ of $E_A$ with frame transform $\theta_\fram$.
  Equivalently, $T \in \sch{p}{E_A}$ if and only if the series $\sum_{i=1}^\infty \ip{\fram_i, |T|^p \fram_i}$ converges in norm; the limit equals $\tr |T|^p = \tr \theta_\fram |T|^p \theta_\fram^*$.
\end{theorem}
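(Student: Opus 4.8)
The plan is to reduce every assertion, character by character, to the Hilbert space statements of Corollary~\ref{corollary:frame-trace-is-trace} and Lemma~\ref{lemma:l2-fram-framt-bound}, and then glue the fibers back together with Dini's theorem. As a preliminary I would record the operator identity $|\theta_\fram T\theta_\fram^*|^p = \theta_\fram |T|^p \theta_\fram^*$ in $\lnmp{l^2(A)}$. Since $\theta_\fram^* \theta_\fram = \id_E$ by Proposition~\ref{proposition:frame-transform}, one has $(\theta_\fram S \theta_\fram^*)^n = \theta_\fram S^n \theta_\fram^*$ for all $n$ and all $S \in \lnmp{E_A}$, hence $g(\theta_\fram S \theta_\fram^*) = \theta_\fram\, g(S)\, \theta_\fram^*$ for every continuous $g$ with $g(0)=0$, by uniform approximation by polynomials without constant term. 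Taking $S = T^*T$ and $g(t) = \sqrt{t}$ gives $|\theta_\fram T \theta_\fram^*| = (\theta_\fram T^*\theta_\fram^*\theta_\fram T\theta_\fram^*)^{1/2} = (\theta_\fram T^*T\theta_\fram^*)^{1/2} = \theta_\fram |T|\theta_\fram^*$, and then $g(t) = t^p$ yields the claimed identity. In particular $\theta_\fram |T|^p \theta_\fram^*$ is positive and, by Definition~\ref{definition:minimal-definition-of-schatten-class} on $l^2(A)$, the element ``$\tr \theta_\fram |T|^p \theta_\fram^*$'' is by definition the function $\chi \mapsto \tr\, |\chi_*(\theta_\fram T \theta_\fram^*)|^p$ on $\gfdual{A}$.

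Next I would carry out the fiberwise analysis. Fix a character $\chi \in \gfdual{A}$ and write $\chi_*$ for the localization; it is a $\ast$-homomorphism on adjointables (Proposition~\ref{proposition:star-homomorphism-as-inner-product-of-modules}), so it commutes with continuous functional calculus, whence $\chi_*(|T|^p) = |\chi_* T|^p$. By Proposition~\ref{proposition:phistar-of-frame-is-frame} the sequence $\chi_* \fram$ is a frame of the separable Hilbert space $\chi_* E_A$, with frame transform $\chi_* \theta_\fram$, an isometry $\chi_* E_A \to \chi_* l^2(A)$ satisfying $(\chi_*\theta_\fram)^*(\chi_*\theta_\fram) = \id$. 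Two consequences: \emph{(i)} if $\{f_k\}$ is an orthonormal basis of $\chi_* l^2(A)$, then $\{(\chi_*\theta_\fram)^* f_k\}$ is again a frame of $\chi_* E_A$ (direct check using $(\chi_*\theta_\fram)^*(\chi_*\theta_\fram)=\id$), so Corollary~\ref{corollary:frame-trace-is-trace} together with Lemma~\ref{lemma:l2-fram-framt-bound}, which upgrades the ``if'' of the Corollary to an ``iff'', gives for every positive $X \in \lnmp{\chi_* E_A}$ the equality $\tr\,(\chi_*\theta_\fram)\,X\,(\chi_*\theta_\fram)^* = \tr X$ in $[0,\infty]$; applied to $X = \chi_*(|T|^p)$ this reads $\tr\,|\chi_*(\theta_\fram T \theta_\fram^*)|^p = \tr\,\chi_*(\theta_\fram |T|^p \theta_\fram^*) = \tr\,|\chi_* T|^p$. \emph{(ii)} Applying Corollary~\ref{corollary:frame-trace-is-trace} and Lemma~\ref{lemma:l2-fram-framt-bound} to the frame $\chi_*\fram$ and the positive operator $\chi_*(|T|^p)$ gives $\tr\,|\chi_* T|^p = \sum_{i=1}^\infty \ip{\chi_*\fram_i,\, \chi_*(|T|^p)\,\chi_*\fram_i} = \sum_{i=1}^\infty \chi\big(\ip{\fram_i, |T|^p \fram_i}\big)$ in $[0,\infty]$, since $\chi_*$ respects inner products and the module action. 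Put $a_i \eqdef \ip{\fram_i, |T|^p \fram_i} \in A_+$.

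Finally I would assemble these. By \emph{(ii)} the function $\tr|T|^p \colon \chi \mapsto \tr|\chi_* T|^p$ coincides with $\chi \mapsto \sum_i \chi(a_i)$; by Dini's theorem in the abelian form recalled above this lies in $A$ if and only if $\sum_i a_i = \sum_i \ip{\fram_i, |T|^p \fram_i}$ converges in norm, which by Definition~\ref{definition:minimal-definition-of-schatten-class} is exactly $T \in \sch{p}{E_A}$; and when it converges, evaluating at an arbitrary $\chi$ shows $\sum_i \ip{\fram_i, |T|^p \fram_i} = \tr|T|^p$ in $A$. By \emph{(i)} the function $\tr|T|^p$ agrees pointwise with $\chi \mapsto \tr\,|\chi_*(\theta_\fram T \theta_\fram^*)|^p$, which by Definition~\ref{definition:minimal-definition-of-schatten-class} on $l^2(A)$ lies in $A$ precisely when $\theta_\fram T \theta_\fram^* \in \sch{p}{l^2(A)}$; hence $T \in \sch{p}{E_A} \iff \theta_\fram T \theta_\fram^* \in \sch{p}{l^2(A)}$, and the two elements $\tr \theta_\fram |T|^p \theta_\fram^*$ and $\tr|T|^p$ of $A$ are equal. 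I expect the only genuinely delicate point to be the bookkeeping in \emph{(i)}--\emph{(ii)}: the series identities must be read as equalities in the extended half-line $[0,\infty]$ — so that Dini's theorem can be applied without first knowing summability fiberwise — which is exactly why Lemma~\ref{lemma:l2-fram-framt-bound} is invoked to supply the converse direction missing from Corollary~\ref{corollary:frame-trace-is-trace}. The remaining ingredients, functoriality of localization and continuous functional calculus on the corner $\theta_\fram \theta_\fram^* l^2(A)$, are routine.
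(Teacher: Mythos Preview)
Your proposal is correct and follows essentially the same route as the paper: both arguments hinge on the identity $|\theta_\fram T \theta_\fram^*|^p = \theta_\fram |T|^p \theta_\fram^*$ (from $\theta_\fram^*\theta_\fram = \id$), the fact that $\chi_*\fram$ is a frame of the fiber $\chi_* E_A$ so that Corollary~\ref{corollary:frame-trace-is-trace} gives $\tr|\chi_* T|^p = \sum_i \chi(\ip{\fram_i,|T|^p\fram_i})$, and then Dini's theorem to pass between pointwise and norm convergence. The only cosmetic difference is that the paper handles the first equivalence by computing directly with the standard basis $e_i\otimes 1$ of $l^2(A)$ (so that $\theta_\fram^*(e_i\otimes 1)=\fram_i$), whereas you go through the auxiliary frame $\{(\chi_*\theta_\fram)^* f_k\}$; your extra care about reading the series identities in $[0,\infty]$ and invoking Lemma~\ref{lemma:l2-fram-framt-bound} for the converse is a welcome clarification of a point the paper leaves implicit.
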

\begin{proof}
  We start with the second part.
  As $\chi_*(\fram)$ is a frame of the Hilbert space $\chi_* E_A$, one has $\tr |\chi_*T|^p = \sum_{i=1}^\infty \ip{\chi_*(\fram_i), |\chi_*T|^p \chi_*(\fram_i)} = \sum_{i=1}^\infty x(\ip{\fram_i, |T|^p \fram_i})$. Hence if $T \in \sch{p}{E_A}$, the positive series $\sum_{i=1}^\infty \ip{\fram_i, |T|^p \fram_i}$ converges in norm to an element $\tr |T|^p$ of $A$ by Dini's theorem.
  
  Conversely, if the series $\sum_{i=1}^\infty \ip{\fram_i, |T|^p \fram_i}$ converges in norm, then the limit provides an element $\tr |T|^p \in A$ such that $\chi(\tr |T|^p) = \tr |\chi_* T|^p$ for all characters $\chi$ of $A$, so that $T \in \sch{p}{E_A}$.

  For the first part, let $\{e_i\}_i$ be the standard orthonormal basis of $l^2$, so that $\theta_\fram^*(e_i \otimes 1) = \fram_i$.
  Because $\theta_\fram^* \theta_\fram = \id_{E_A}$, we have $|\theta_\fram T \theta_\fram^*|^p = \theta_\fram |T|^p \theta_\fram^*$.  Then, for any $\chi \in \gfdual{A}$ we have
  $$
  \tr \chi_* \theta_\fram |T|^p \theta_\fram^* = \sum_{i=1}^\infty \ip{e_i, \chi_* \theta_\fram |T|^p \theta_\fram^* e_i} = \sum_{i=1}^\infty  \chi\left(\ip{\fram_i, |T|^p \fram_i}\right).
  $$
  Thus, if $\sum_{i=1}^\infty \ip{\fram_i, |T|^p \fram_i}$ converges to an element of $A$, we have $\theta_\fram |T|^p \theta_\fram^* \in \sch{p}{l^2(A)}$. Conversely, if $\theta_\fram |T|^p \theta_\fram^* \in \sch{p}{l^2(A)}$ then the function on $\gfdual{A}$ defined by $\chi \mapsto \sum_{i=1}^\infty \chi\left(\ip{\fram_i, |T|^p \fram_i}\right)$ lies in $C_0(X) \simeq A$.
  The series must then converge in norm by Dini's theorem.
\end{proof}

\begin{corollary} \label{corolllary:s-in-Lp-if-sp-leq-tp-in-Lp}
  Let $S \in \lnmp{E_A}$ and $T \in \sch{p}{E_A}$.
  If $|S|^p \leq |T|^p$, then $S \in \sch{p}{E_A}$ and in particular $\tr |S|^p \leq \tr |T|^p$.
\end{corollary}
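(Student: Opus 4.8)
The plan is to reduce everything to the series characterization of $\sch{p}{E_A}$ furnished by Theorem~\ref{theorem:nonstandard-schatten-class-is-pullback-by-frame}, using only two elementary facts: that $0 \le R' \le R$ in $\lnmp{E_A}$ forces $\ip{v, R' v} \le \ip{v, R v}$ in $A$ for every $v \in E_A$ (write $R - R' = B^* B$), and that $0 \le a \le b$ in the $C^*$-algebra $A$ implies $\norm{a} \le \norm{b}$.

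First I would fix, by Proposition~\ref{proposition:existence-of-frames}, a frame $\fram$ of $E_A$. Since $T \in \sch{p}{E_A}$, Theorem~\ref{theorem:nonstandard-schatten-class-is-pullback-by-frame} shows that the positive series $\sum_{i=1}^\infty \ip{\fram_i, |T|^p \fram_i}$ converges in the norm of $A$, with limit $\tr |T|^p$. From $0 \le |S|^p \le |T|^p$ I get the termwise inequalities $0 \le \ip{\fram_i, |S|^p \fram_i} \le \ip{\fram_i, |T|^p \fram_i}$ in $A$, hence the norm bound $\norm{\sum_{i=N+1}^M \ip{\fram_i, |S|^p \fram_i}} \le \norm{\sum_{i=N+1}^M \ip{\fram_i, |T|^p \fram_i}}$ for all $M > N$. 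As the right-hand side tends to $0$ when $N, M \to \infty$, the partial sums of $\sum_i \ip{\fram_i, |S|^p \fram_i}$ are Cauchy, so this series converges in norm; the converse direction of Theorem~\ref{theorem:nonstandard-schatten-class-is-pullback-by-frame} then gives $S \in \sch{p}{E_A}$ together with $\tr |S|^p = \sum_{i=1}^\infty \ip{\fram_i, |S|^p \fram_i}$. Finally, $\sum_{i=1}^N \ip{\fram_i, |S|^p \fram_i} \le \sum_{i=1}^N \ip{\fram_i, |T|^p \fram_i} \le \tr |T|^p$ for every $N$, and passing to the norm limit in $N$ — using that the order interval $\{a \in A : a \le \tr |T|^p\}$ is norm-closed — yields $\tr |S|^p \le \tr |T|^p$.

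I do not expect a genuine obstacle. The only point needing a little care is the passage from termwise domination to norm convergence of $\sum_i \ip{\fram_i, |S|^p \fram_i}$; unlike in Theorem~\ref{theorem:nonstandard-schatten-class-is-pullback-by-frame} this does not require Dini's theorem, precisely because the dominating series is already known to converge in norm, so the Cauchy criterion in the Banach space $A$ suffices. One could alternatively use Proposition~\ref{proposition:t-in-lp-iff-tabs-in-lp-iff-tp-in-l1} to reduce to the case $p = 1$ with $S$ and $T$ positive beforehand, but this shortens nothing.
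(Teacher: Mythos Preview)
Your proof is correct and follows essentially the same route as the paper: fix a frame, use the termwise inequality $\ip{\fram_i, |S|^p \fram_i} \le \ip{\fram_i, |T|^p \fram_i}$ on finite index sets to transfer the Cauchy property from the $T$-series to the $S$-series, and invoke Theorem~\ref{theorem:nonstandard-schatten-class-is-pullback-by-frame}. You are simply more explicit than the paper about the norm estimate on the tails and about passing to the limit to obtain $\tr |S|^p \le \tr |T|^p$.
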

\begin{proof}
  Let $\fram$ be a frame of $E_A$.
  Then $\sum_{i \in F} \ip{\fram_i, |S|^p \fram_i} \leq \sum_{i \in F} \ip{\fram_i, |T|^p \fram_i}$ for all finite $F \subset \N$; in particular, the left-hand side is Cauchy whenever the right-hand side is.
  By Theorem~\ref{theorem:nonstandard-schatten-class-is-pullback-by-frame}, this will suffice.
\end{proof}

\begin{remark}
  The above Corollary is weaker than the Hilbert space version ({\em cf.} Lemma~\ref{lemma:schatten-norm-monotone}(\ref{lemma:schatten-norm-monotone:item:3}) below.
 Instead, Corollary~\ref{corollary:lp-order-ideal-in-compacts} below gives a stronger result but an additional assumption on $S$ is required.
  Note that this is the only point in the treatment of this chapter where such a difference between the Hilbert module and Hilbert space Schatten classes appears.
\end{remark}

The most straightforward road to analyzing the structure and properties of $\sch{p}{E_A}$ now lies open: we will investigate $\sch{p}{l^2(A)}$ and use the pullback by the frame transforms to transfer its properties to $\sch{p}{E_A}$.
It will turn out that $\sch{p}{l^2(A)}$ is indeed very well-behaved, so that this allows us to recover many of the familiar properties of the Schatten classes of operators on Hilbert spaces.

\subsection{The Schatten class on the standard module}
\label{section:schatten-class-standard-module}
Let $H$ be a separable Hilbert space and let $A=C_0(X)$ be an abelian $C^*$-algebra. Recall from Section \ref{sect:stand-mod-abel} that the Hilbert $A$-module $H \otimes_\C A$ is isomorphic to $C_0(X,H)$ through the canonical isomorphism $\chi_*(H \otimes_\C A) \simeq H$ (with a character $\chi\in \widehat{A}$ always corresponding via the Gelfand transform to a point $x \in X$). Recall, moreover, that its endomorphism space is given by $\cbstr(X, H)$ with the fiberwise action given simply by $T(h)(x) \eqdef T(x) (h(x))$ for $x \in X$.

Because all fibers are identified canonically with $H$, we may canonically compare the localizations of an operator $T \in \lnmp{H \otimes_\C A}$ between \emph{different} fibers.
This technically useful difference between $H \otimes_\C A$ and other Hilbert $A$-modules will allow us to use topologies on $B(H)$ to define particular subsets of $\lnmp{H \otimes_\C A}$.
Most importantly,

\begin{definition} \label{definition:continuous-schatten-class}
  The space of {\em continuous Schatten-class operators} on $C_0(X,H) \simeq H \otimes_\C A$ is the subspace $C_0(X, \sch{p}{H})$ of $\cbstr(X, B(H))$.
\end{definition}

Note that where the requirement that $\tr |T|^p \in A$ is the least restrictive
among reasonable criteria for a `Schatten-class operator' $T$, as
discussed above Definition~\ref{definition:minimal-definition-of-schatten-class},
the condition of Definition~\ref{definition:continuous-schatten-class}
is arguably the \emph{most} restrictive.

However, we will prove that the demands are, in fact, equivalent, so that the space $\sch{p}{H \otimes_\C A}$ of Schatten-class operators can be identified with the Banach space $C_0(X, \sch{p}{H})$ of continuous Schatten-class operators. 
This will later ---specifically in Theorem ~\ref{theorem:schp-is-ideal-and-norm-has-properties}---allow us to combine the properties that follow straightforwardly from either of the two definitions.

\begin{remark} \label{remark:topological-schatten-class-in-operator-schatten-class}
  Clearly, one has $C_0(X, \sch{p}{H}) \subset \sch{p}{C_0(X,H)}$ because, for $T \in C_0(X, \sch{p}{H})$ and $x, y \in X$, we have $\norm{T(x) - T(y)}_p \geq | \norm{T(x)}_p - \norm{T(y)}_p |$ so that $x \mapsto \norm{T(x)}_p \in A$. 
\end{remark}


The ostensibly more restrictive definition of the continuous Schatten class has some advantages to that of $\sch{p}{C_0(X,H)}$.
For instance, it is immediate from the definition that $C_0(X, \sch{p}{H})$ is closed under addition, as we have \emph{not} yet shown for $\sch{p}{C_0(X,H)}$.
Moreover, we can easily obtain a continuous version of the H\"older--von Neumann inequality:

\begin{proposition} \label{proposition:continuous-hoelder-von-neumann-inequality}
 Let $p, q, r \geq 1$ such that $\frac1{p} + \frac1{q} = \frac1{r}$, and let $S \in C_0(X, \sch{p}{H})$ and $T \in C_0(X, \sch{q}{H})$. Then $ST \in C_0(X, \sch{r}{H})$ and $\norm{ST}_r \leq \norm{S}_p \norm{T}_q$.
\end{proposition}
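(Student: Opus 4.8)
The plan is to reduce the statement to the pointwise Hölder–von Neumann inequality on the Hilbert space $H$, which is classical, and then to verify the two extra properties that upgrade a pointwise statement to a statement about $C_0(X,\sch{r}{H})$: the correct norm bound (uniform in $x$) and membership in $C_0(X,\sch{r}{H})$, that is, norm-continuity of $x \mapsto S(x)T(x)$ in the $r$-Schatten norm together with vanishing at infinity.

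First I would record the pointwise facts. For each $x \in X$, $S(x) \in \sch{p}{H}$ and $T(x) \in \sch{q}{H}$, so the classical Hölder–von Neumann inequality gives $S(x)T(x) \in \sch{r}{H}$ with $\norm{S(x)T(x)}_r \leq \norm{S(x)}_p \norm{T(x)}_q$. Taking the supremum over $x$ and using that $x \mapsto \norm{S(x)}_p$ and $x \mapsto \norm{T(x)}_q$ are bounded functions (Proposition~\ref{proposition:c0-x-y-is-banach}) already yields $\norm{ST}_r \leq \norm{S}_p \norm{T}_q$, \emph{provided} we know $ST \in C_0(X,\sch{r}{H})$; the norm bound is then automatic once membership is established.

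The substance is therefore to show $x \mapsto S(x)T(x)$ lies in $C_0(X,\sch{r}{H})$. For continuity, fix $x$ and a sequence (or net) $x_i \to x$; write
\begin{align*}
  S(x_i)T(x_i) - S(x)T(x) &= \bigl(S(x_i) - S(x)\bigr)T(x_i) + S(x)\bigl(T(x_i) - T(x)\bigr),
\end{align*}
and estimate each term in $\norm{\cdot}_r$ by the pointwise Hölder inequality: the first is at most $\norm{S(x_i) - S(x)}_p\,\norm{T(x_i)}_q$, the second at most $\norm{S(x)}_p\,\norm{T(x_i) - T(x)}_q$. Since $S \in C_0(X,\sch{p}{H})$ and $T \in C_0(X,\sch{q}{H})$, the factors $\norm{S(x_i) - S(x)}_p$ and $\norm{T(x_i) - T(x)}_q$ tend to $0$, while $\norm{T(x_i)}_q$ stays bounded; hence $\norm{S(x_i)T(x_i) - S(x)T(x)}_r \to 0$. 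For vanishing at infinity, given $\epsilon > 0$ pick a constant $M$ bounding $\norm{T}_q$ and note that $\{x : \norm{S(x)T(x)}_r \geq \epsilon\} \subseteq \{x : \norm{S(x)}_p \geq \epsilon/M\}$, which is compact because $S \in C_0(X,\sch{p}{H})$; one should check the degenerate case $M = 0$ separately (then $ST = 0$). This shows $ST \in C_0(X,\sch{r}{H})$, completing the argument.

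I expect no serious obstacle here: the only mild subtlety is that the product operation $\cbstr(X,B(H)) \times \cbstr(X,B(H)) \to \cbstr(X,B(H))$ used to define $ST$ must be checked to agree with the fiberwise product $x \mapsto S(x)T(x)$ (immediate from Proposition~\ref{prop:endo-free}, under which composition of endomorphisms is pointwise composition), so that all the pointwise estimates above genuinely bound the object named $ST$ in the statement.
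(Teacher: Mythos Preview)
Your proof is correct and follows essentially the same route as the paper: both use the telescoping decomposition $S(x')T(x') - S(x)T(x) = (S(x') - S(x))T(x') + S(x)(T(x') - T(x))$ together with the pointwise H\"older--von~Neumann inequality to get continuity in the $r$-norm, and the pointwise bound $\norm{S(x)T(x)}_r \leq \norm{S(x)}_p \norm{T(x)}_q$ for both the norm estimate and vanishing at infinity. Your treatment is slightly more detailed (explicit handling of vanishing at infinity and the degenerate case $M=0$), but the argument is the same.
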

\begin{proof}
  For any $x, y \in X$ we have
  \[
    \norm{ST(x) - ST(y)}_r \leq \norm{(S(x) - S(y)) T(x) + S(y) (T(x) - T(y))}_r,
  \]
  which is bounded by $\norm{S(x) - S(y)}_p \norm{T(x)}_q + \norm{S(y)}_p \norm{T(x) - T(y)}_q$ due to the H\"older--von Neumann inequality \cite[Theorem 2.8]{MR2154153}.
  We conclude that $ST$ is continuous as a map from $X$ to $\sch{r}{H}$. Moreover, since $\norm{ST(x)}_r \leq \norm{S(x)}_p \norm{T(x)}_q$ for all $x$, the statement on the norms follows as well as the claim that $ST \in C_0(X, \sch{r}{H})$.
\end{proof}

We now identify a subset of $C_0(X, \sch{p}{H}) \subseteq \sch{p}{H \otimes_\C A}$ ({\em cf.} Remark ~\ref{remark:topological-schatten-class-in-operator-schatten-class} for the latter inclusion) whose completion in the Banach norm of $C_0(X, \sch{p}{H})$ is all of $\sch{p}{H \otimes_\C A}$. This will show that $\sch{p}{H \otimes_\C A}$ coincides with the Banach space $C_0(X, \sch{p}{H})$. Moreover, the fact that this common subset consists of finite-rank operators, in the Hilbert module sense, allows us to show that $\sch{p}{E_A} \subset \cptmp{E_A}$ in Theorem~\ref{theorem:schp-is-ideal-and-norm-has-properties}.

\begin{proposition}
The finite-rank operators (in the Hilbert \cstar-module sense) on $C_0(X, H)$ lie in $C_0(X, \sch{p}{H})$.
\end{proposition}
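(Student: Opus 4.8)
The plan is to reduce to the case of a rank-one operator, since finite-rank operators on $C_0(X,H)$ are finite sums of such, and both $C_0(X,\sch{p}{H})$ and the finite-rank operators are closed under addition. A rank-one operator on $C_0(X,H)$ has the form $\theta_{v,w} \colon u \mapsto v \cdot \ip{w,u}$ for fixed $v,w \in C_0(X,H)$. First I would compute its fiberwise localization: for a character $\chi$ corresponding to $x \in X$, one has $\chi_*\theta_{v,w} = \ketbra{v(x)}{w(x)}$, the rank-one operator on $H$ sending $h \mapsto v(x) \ip{w(x),h}_H$. Since every rank-one operator on a Hilbert space lies in $\sch{p}{H}$ for every $p \geq 1$, each fiber operator is Schatten class, so the only thing to check is that $x \mapsto \ketbra{v(x)}{w(x)}$ is a norm-continuous map $X \to \sch{p}{H}$ vanishing at infinity.

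The key estimate is the standard one for rank-one operators: $\norm{\ketbra{a}{b}}_p = \norm{a}_H \norm{b}_H$ for any $a,b \in H$, independent of $p$ (the single nonzero singular value is $\norm{a}_H\norm{b}_H$). From this, for $x,y \in X$,
\[
  \norm{\ketbra{v(x)}{w(x)} - \ketbra{v(y)}{w(y)}}_p
  \leq \norm{\ketbra{v(x) - v(y)}{w(x)}}_p + \norm{\ketbra{v(y)}{w(x) - w(y)}}_p,
\]
which equals $\norm{v(x) - v(y)}_H \norm{w(x)}_H + \norm{v(y)}_H \norm{w(x) - w(y)}_H$. Since $v,w \in C_0(X,H)$, the maps $x \mapsto v(x)$, $x \mapsto w(x)$ are norm-continuous with bounded norm, so the right-hand side tends to $0$ as $y \to x$; hence $x \mapsto \ketbra{v(x)}{w(x)}$ is continuous into $\sch{p}{H}$. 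For vanishing at infinity, note $\norm{\ketbra{v(x)}{w(x)}}_p = \norm{v(x)}_H \norm{w(x)}_H$, and by Proposition~\ref{proposition:c0-x-y-is-banach} the functions $x \mapsto \norm{v(x)}_H$ and $x \mapsto \norm{w(x)}_H$ lie in $C_0(X)$, so their product does too. Therefore $\theta_{v,w} \in C_0(X, \sch{p}{H})$.

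Finally, a general finite-rank operator on $C_0(X,H)$ is a finite sum $\sum_{k} \theta_{v_k,w_k}$ with $v_k, w_k \in C_0(X,H)$, and since $C_0(X,\sch{p}{H})$ is a linear subspace it contains this sum as well. This completes the argument. I do not anticipate a genuine obstacle here; the only mild care needed is in recording that the rank-one Schatten norm is independent of $p$ and equals the product of the vector norms, which makes the continuity and decay estimates uniform across all $p \geq 1$ — this is exactly what lets the single computation serve for every Schatten class at once.
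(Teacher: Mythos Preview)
Your proof is correct and follows essentially the same approach as the paper: reduce to rank-one operators $\ketbra{v}{w}$, use the identity $\norm{\ketbra{\xi}{\eta}}_p = \norm{\xi}\norm{\eta}$, and apply a triangle-inequality splitting together with norm continuity of $v,w$. Your version is in fact slightly more complete than the paper's, since you also spell out the vanishing-at-infinity check and the passage to finite sums.
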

\begin{proof}
Let $T = \ket{v}\bra{w}$ with $v,w \in C_0(X,H)$. 
Then, for $x, y \in X$, $\norm{(T_x - T_y)}_p \leq \norm{ \ket{v_x - v_y} \bra{w_y} }_p + \norm{\ket{v_y} \bra{w_x - w_y} }_p$.
Pointwise in $H$, however, we have $\norm{ \ket{\xi} \bra{\eta}}_p = \norm{\xi} \norm{\eta}$, so that norm continuity of $v$ and $w$ finish the proof.
\end{proof}

\begin{lemma} \label{lemma:trivial-uniformly-finite-rank-so-schatten}
  Let $V \subset H$ be finite-dimensional and consider $C_0{(X, B(V))}$ as a subspace of $\cbstr{(X, B(H))}$ by the map $T \mapsto T \oplus 0 \in B(V) \oplus B(V^{\perp}) \subset B(H)$.
  Then, all elements of $C_0(X, B(V))$ are finite rank operators on the Hilbert modules $C_0(X, H)$.
In particular, we have $C_0{(X, B(V))} \subset C_0(X, \sch{p}{H})
  $ for all $1 \leq p < \infty$.
\end{lemma}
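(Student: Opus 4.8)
The plan is to prove the stronger statement first --- that every $T \in C_0(X, B(V))$ is a finite-rank operator on the Hilbert module $C_0(X,H)$, {\em i.e.} lies in $\frkmp{C_0(X,H)}$ --- and then to deduce membership in $C_0(X,\sch{p}{H})$ from the preceding Proposition. Reducing the Schatten-class claim to a finite-rank claim is harmless: once $T$ is exhibited as a finite sum of rank-one Hilbert-module operators $\ketbra{\phi}{\psi}$ with $\phi,\psi \in C_0(X,H)$, each such summand is pointwise a rank-one Hilbert-space operator with $\norm{\ketbra{\phi(x)}{\psi(x)}}_p = \norm{\phi(x)}\,\norm{\psi(x)} \in C_0(X)$ and $x \mapsto \ketbra{\phi(x)}{\psi(x)}$ norm-continuous into $\sch{p}{H}$, so it lies in $C_0(X,\sch{p}{H})$, and finite sums stay there.

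So let $n = \dim V$, fix an orthonormal basis $v_1,\dots,v_n$ of $V$, and for $T \in C_0(X, B(V))$ put $t_{ij}(x) \eqdef \ip{v_i, T(x) v_j}_H$. Since $S \mapsto \ip{v_i, S v_j}$ is a bounded functional on $B(V)$ and $T$ is a norm-continuous $B(V)$-valued map vanishing at infinity, each $t_{ij}$ lies in $C_0(X)$, and pointwise $T(x) = \sum_{i,j} t_{ij}(x)\,\ketbra{v_i}{v_j}$ as operators on $H$. The obvious obstruction to reading this off as a finite-rank Hilbert-module operator is that the ``constant section'' $x \mapsto v_i$ does \emph{not} belong to $C_0(X,H)$ when $X$ is noncompact, so we must distribute each coefficient over the ket and the bra. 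To this end, factor $t_{ij} = g_{ij}\,h_{ij}$ with $g_{ij}, h_{ij} \in C_0(X)$: take $h_{ij} = |t_{ij}|^{1/2}$ and let $g_{ij}$ be $t_{ij}\,|t_{ij}|^{-1/2}$ on $\{t_{ij}\neq 0\}$ and $0$ elsewhere. Then $g_{ij}$ is continuous (it is continuous off the zero set of $t_{ij}$, and at each zero $|g_{ij}| = |t_{ij}|^{1/2}\to 0$) and vanishes at infinity, and $g_{ij}h_{ij} = t_{ij}$; this is just the factorization $C_0(X) = C_0(X)^2$. Now set $\phi_{ij}(x) \eqdef g_{ij}(x)\,v_i$ and $\psi_{ij}(x) \eqdef \overline{h_{ij}(x)}\,v_j$, both of which lie in $C_0(X,H)$. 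A routine pointwise computation gives $\ketbra{\phi_{ij}}{\psi_{ij}}(\xi)(x) = g_{ij}(x)h_{ij}(x)\,\ketbra{v_i}{v_j}(\xi(x)) = t_{ij}(x)\,\ketbra{v_i}{v_j}(\xi(x))$, so the finite sum $\sum_{i,j=1}^n \ketbra{\phi_{ij}}{\psi_{ij}}$ equals $T$, and therefore $T \in \frkmp{C_0(X,H)}$.

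Applying the preceding Proposition --- $\frkmp{C_0(X,H)} \subset C_0(X,\sch{p}{H})$ for every $1 \le p < \infty$ --- now yields $C_0(X, B(V)) \subset C_0(X, \sch{p}{H})$. The only step carrying real content, and the one I would flag as the crux, is the factorization of the matrix coefficients $t_{ij}$ into two functions vanishing at infinity; this is exactly what makes the argument go through for noncompact $X$, whereas for compact $X$ one could bypass it entirely, since there the constant sections $x\mapsto v_i$ already lie in $C(X,H)$ and one writes $T = \sum_{i,j}\ketbra{t_{ij}\,v_i}{v_j}$ directly.
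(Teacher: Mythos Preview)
Your proof is correct. The paper takes a slightly different---and arguably cleaner---route to the same factorization idea: instead of decomposing $T$ into matrix entries and factoring each scalar $t_{ij} \in C_0(X)$ individually, it factors the operator itself via the weak polar decomposition $T = S\,|T|^{1/2}$ in $C_0(X,B(V))$, then inserts the resolution of identity $\sum_i \ketbra{e_i}{e_i} = \id_V$ to obtain $T = \sum_{i=1}^n \ketbra{S e_i}{|T|^{1/2} e_i}$. Since $S$ and $|T|^{1/2}$ lie in $C_0(X,B(V))$, the vectors $x \mapsto S(x) e_i$ and $x \mapsto |T(x)|^{1/2} e_i$ already lie in $C_0(X,H)$, and one gets $n$ rank-one terms rather than your $n^2$. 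The underlying trick is identical---distribute the $C_0$ decay over both legs of the ket--bra so that the constant sections become $C_0$ sections---but the paper performs the factorization once at the operator level, whereas you do it entry by entry. Both are perfectly valid; the paper's version is just a touch more economical.
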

\begin{proof}
  Let $T \in C_0(X, B(V))$ and decompose so that $T = S |T|^{\frac12}$.
  Then let $\{e_i\}_{i=1}^n$ be an orthonormal basis of $V$.
  Because $\sum_{i} \ket{e_i} \bra{e_i} = \id_V$, we have $T = S \sum_{i} \ket{e_i} \bra{e_i} |T|^{\frac12} = \sum_i \ket{S e_i} \bra{|T|^{\frac12} e_i}$, where we denote for $R \in C_0(X, B(V))$ by $R e_i$ the element $x \mapsto R(x) e_i$ of $C_0(X, V)$. We conclude that $T$ is of finite rank.
\end{proof}

\begin{remark}
By a theorem of Fell \cite[Theorem 4.1]{MR164248}, the compact operators on $C_0(X, H)$ that have bounded rank automatically have continuous trace.
See remark~\ref{remark:comparison-to-dixmier-continuous-trace}, below, for the link to the study of continuous-trace \cstar-algebras.
\end{remark}

\subsubsection{Some properties of the Schatten classes on Hilbert spaces}

We assemble here some more or less well-known properties of the ordinary Schatten classes on $B(H)$.
The purpose is to show that one can use the series that defines the trace of $|T|^p$ to control the rate at which certain finite-rank approximations of $T$ will converge to $T$ in Schatten norm.
\begin{lemma} \label{lemma:schatten-norm-monotone}
  Let $T \in \sch{p}{H}$.
  \begin{enumerate}
      \item  For $1\leq  p <2 $ one has $\norm{T}_p^p = \inf_{\{e_i\}_i} \sum_{i=1}^\infty \norm{T e_i}^p$, where the infimum is taken over orthonormal bases $\{e_i\}_i$ of $H$.
  \item For $p \geq 2$, one has $\norm{T}_p^p = \sup_{\{e_i\}_i} \sum_{i=1}^\infty \norm{T e_i}^{p}$, where the supremum is over orthonormal bases $\{e_i\}_i$ of $H$.

  \item \label{lemma:schatten-norm-monotone:item:3} Let $p \geq 2$. For $S \in \lnmp{H}$, if $|S|^2 \leq T$ for some $T \in \sch{p/2}{H}$, then $S \in \sch{p}{H}$ and $\norm{S}_{p}^p \leq \norm{T}_{p/2}^{p/2}$.
  \end{enumerate}
\end{lemma}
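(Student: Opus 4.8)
The plan is to express everything in terms of one quantity: for an orthonormal basis $\{e_i\}$ of $H$ one has $\norm{Te_i}^2 = \ip{e_i, |T|^2 e_i}$, so that $\sum_i \norm{Te_i}^p = \sum_i d_i^{p/2}$, where $(d_i)$ is the diagonal of the positive compact operator $|T|^2$ in the basis $\{e_i\}$. The decisive input is the Ky Fan maximum principle: for every finite $F \subset \N$ one has $\sum_{i \in F} d_i \leq \sum_{n=1}^{|F|} \mu_n^2$, where $\mu_1 \geq \mu_2 \geq \cdots$ are the singular values of $T$ (the eigenvalues of $|T|$); equivalently, the non-increasing rearrangement of $(d_i)$ is weakly majorized by $(\mu_n^2)$. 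Since $T$ is compact I can also choose an orthonormal basis $\{f_n\}$ diagonalizing $|T|$, for which $\norm{Tf_n} = \mu_n$ and hence $\sum_n \norm{Tf_n}^p = \sum_n \mu_n^p = \norm{T}_p^p$; this basis realizes the claimed value, so in parts 1 and 2 it only remains to prove the corresponding inequality for an arbitrary basis.

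For part 2 ($p \geq 2$) the function $\phi(t) = t^{p/2}$ is convex, non-decreasing, and vanishes at $0$; such a function maps weak majorization to weak majorization, whence $\sum_i d_i^{p/2} \leq \sum_n (\mu_n^2)^{p/2} = \norm{T}_p^p$ for every orthonormal basis, and the supremum equals $\norm{T}_p^p$. No integrability of $|T|^2$ is needed, which matters because for $p > 2$ the operator $|T|^2$ need not be trace class. For part 1 ($1 \leq p < 2$) I would first invoke $\sch{p}{H} \subseteq \sch{2}{H}$: then $|T|^2$ is trace class, so $\sum_i d_i = \tr |T|^2 = \sum_n \mu_n^2$, and together with the partial-sum bounds this promotes the weak majorization of $(d_i)$ by $(\mu_n^2)$ to genuine majorization. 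Now $\phi(t) = t^{p/2}$ is \emph{concave} (as $p/2 < 1$), so the inequality reverses: $\sum_i d_i^{p/2} \geq \sum_n \mu_n^p$ for every basis, and the infimum equals $\norm{T}_p^p$. Both majorization facts are classical and I would cite a standard reference such as \cite{MR2154153} rather than reprove them.

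For part 3 the core estimate is that for \emph{every} orthonormal basis $\{e_i\}$,
\[
\sum_i \norm{Se_i}^p \;=\; \sum_i \ip{e_i, |S|^2 e_i}^{p/2} \;\leq\; \sum_i \ip{e_i, T e_i}^{p/2} \;=\; \sum_i \norm{T^{1/2} e_i}^p,
\]
using $|S|^2 \leq T$ and monotonicity of $t \mapsto t^{p/2}$. Since $T \in \sch{p/2}{H}$ with $p/2 \geq 1$, we have $T^{1/2} \in \sch{p}{H}$ and $\norm{T^{1/2}}_p^p = \tr T^{p/2} = \norm{T}_{p/2}^{p/2}$, so part 2 applied to $T^{1/2}$ bounds the right-hand side by $\norm{T}_{p/2}^{p/2}$ uniformly over all bases. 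Finally, $S$ is compact: $0 \leq |S|^2 \leq T$ with $T$ compact forces $|S| = W T^{1/2}$ for the contraction $W \colon T^{1/2}\xi \mapsto |S|\xi$, so $|S|$ and hence $S$ are compact (alternatively, the displayed bound already forces $\norm{Se_i} \to 0$ along every basis). Evaluating the displayed inequality on a basis diagonalizing $|S|$ then gives $\sum_n \mu_n(S)^p \leq \norm{T}_{p/2}^{p/2}$, i.e. $S \in \sch{p}{H}$ and $\norm{S}_p^p \leq \norm{T}_{p/2}^{p/2}$.

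I expect the only genuine subtlety to be the bookkeeping between weak and full majorization in infinite dimensions: part 1 really needs the extra hypothesis $\sch{p}{H} \subseteq \sch{2}{H}$ to obtain equality of the total sums (and thus the concave direction), whereas parts 2 and 3, which rest only on weak majorization and the convexity of $t \mapsto t^{p/2}$, go through with no integrability assumption on $|T|^2$. Everything else is a routine application of the diagonalization of $|T|$ together with standard majorization inequalities.
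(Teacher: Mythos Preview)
Your proof is correct, but the paper takes a more elementary route for parts 1 and 2 that avoids majorization theory entirely. The paper writes, for an eigenbasis $\{e_j\}$ of $|T|^2$ with eigenvalues $\lambda_j$ and an arbitrary orthonormal basis $\{f_i\}$,
\[
\norm{Tf_i}^2 = \sum_j \lambda_j\,|\ip{e_j,f_i}|^2,
\]
a convex combination of the $\lambda_j$. A single application of Jensen's inequality to $t\mapsto t^{p/2}$ gives $\norm{Tf_i}^p \gtrless \sum_j \lambda_j^{p/2}|\ip{e_j,f_i}|^2 = \ip{f_i,|T|^p f_i}$ (with $\geq$ for $p<2$, $\leq$ for $p\geq 2$), and summing over $i$ yields $\tr|T|^p$ on the right directly. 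This sidesteps the Ky~Fan principle, the weak/full majorization distinction, and the auxiliary inclusion $\sch{p}{H}\subset\sch{2}{H}$ that you needed for part~1. Your approach, by contrast, places the result inside the standard majorization framework and makes the role of convexity more transparent, at the cost of invoking heavier machinery as a black box; in effect the paper's doubly-stochastic-plus-Jensen computation is a self-contained proof of the special case of Schur convexity you cite.

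For part 3 the two arguments are essentially identical: both compare $\norm{Se_i}^p\leq\norm{T^{1/2}e_i}^p$ and invoke part~2 for $T^{1/2}$. You are more explicit than the paper about the compactness of $S$ (needed to produce an eigenbasis of $|S|$ on which to evaluate the bound), which is a point the paper leaves implicit.
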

\begin{proof}
  Let $\{e_i\}_i$ be an eigenbasis of (the compact, normal operator) $T^*T$, ordered by decreasing of the corresponding eigenvalues $\{\lambda_i\}$. 
  
  First note that $\tr |T|^p = \sum_{i=1}^\infty \ip{T e_i, T e_i}^{p/2}$.
  Any other orthonormal basis $\{f_i =U e_i\} $ of $H$ is related to $\{ e_i\}$ by some unitary operator $U \in B(H)$.

  For $1 \leq p < 2$, the function $x \mapsto x^{p/2}$ is concave on $\R_+$. Thus, since $\ip{T f_i, T f_i} = \sum_j \ip{f_i, T^* T e_j} \ip{e_j, f_i} = \sum_j \lambda_j \left| \ip{e_j, f_i} \right|^2$ we find that $\norm{T f_i}^{p} \geq \sum_j \lambda_j^{p/2} \left|\ip{e_j, f_i} \right|^2 = \ip{f_i, |T|^p f_i}$. We conclude that
  $$\sum_{i=1}^\infty \norm{T f_i}^{p} \geq \sum_{i=1}^\infty \ip{f_i, |T|^p f_i} = \tr U^* |T|^p U = \tr |T|^p.$$
  
  For $p \geq 2$ the function $x \mapsto x^{p/2}$ is convex on $\R_+$ and we find, {\em  mutatis mutandis} in the argument as above that now 
$$\sum_{i=1}^\infty \norm{T f_i}^{p} \leq \sum_{i=1}^\infty \ip{f_i, |T|^p f_i} = \tr U^* |T|^p U = \tr |T|^p.$$
  

  For the final claim, if $|S|^2 \leq R^* R$, one has $\norm{Se_i}^p \leq \norm{R e_i}^p$ so that $\norm{S}_p^p \leq \norm{R}_p^p = \norm{R^* R}_{p/2}^{p/2}$.
\end{proof}

We will need the following Corollary in the proof of Lemma~\ref{lemma:lph-uniform-completion-of-matc}.

\begin{corollary} \label{corollary:schatten-p-norm-of-truncated-operator-using-truncated-trace}
Let $T \in B(H)$ and let $e$ be a finite-rank projection. For any $p \geq 2$ we have $Te \in \sch{p}{H}$ and, in fact,
\[
\norm{Te}_p^p  \leq \tr e |T|^p e.
\]
\end{corollary}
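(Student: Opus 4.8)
The plan is to reduce the statement to a statement about the positive operator $|T|^p$ and then compare $\norm{Te}_p^p$ with a truncated trace using the eigenbasis that diagonalizes $e$. First I would note that $Te$ is finite rank, hence certainly in $\sch{p}{H}$, so the content is the inequality. Write $\norm{Te}_p^p = \tr |Te|^p = \tr \bigl( e T^* T e \bigr)^{p/2} = \tr \bigl( e |T|^2 e \bigr)^{p/2}$, using that $|Te|^2 = e T^* T e$. So it suffices to show $\tr (e |T|^2 e)^{p/2} \leq \tr e |T|^p e$.

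The key step is an operator inequality: for $p \geq 2$ and a projection $e$, one has $(e |T|^2 e)^{p/2} \leq e |T|^p e$ in the order of positive operators, after which taking traces gives the claim. To see this, set $R = e |T|^2 e$, a positive operator supported on $eH$. On $eH$ we have $R = e |T|^2 e \leq \norm{|T|^2}\, e$, but more usefully, by the operator Jensen inequality applied to the convex function $t \mapsto t^{p/2}$ on $[0,\infty)$ (valid since $p/2 \geq 1$) and the unital completely positive map $\Phi(X) = e X e$ on $B(eH)$, we get $\Phi(|T|^2)^{p/2} = (e|T|^2 e)^{p/2} \leq \Phi\bigl( (|T|^2)^{p/2} \bigr) = e |T|^p e$, where the right-hand side is again read as an operator on $eH$. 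Then $\tr (e|T|^2 e)^{p/2} \leq \tr e |T|^p e$, with the trace on $eH$ agreeing with the trace on $H$ of the operators extended by zero. Alternatively, and perhaps more in the spirit of the surrounding text, I would pick an orthonormal basis $\{f_i\}$ of $H$ adapted to $e$, i.e.\ spanning $eH$ on the first $\rank e$ vectors, and argue directly: $\tr e|T|^p e = \sum_i \ip{f_i, e|T|^p e f_i} = \sum_{i : ef_i = f_i} \ip{f_i, |T|^p f_i}$, while $\tr(e|T|^2 e)^{p/2}$ is a sum of $p/2$-powers of the eigenvalues of $e|T|^2 e$, and convexity of $t \mapsto t^{p/2}$ together with the fact that the diagonal of $e|T|^2 e$ in this basis majorizes its eigenvalue sequence yields the bound (this is the same majorization/convexity mechanism used in Lemma~\ref{lemma:schatten-norm-monotone}).

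The main obstacle is getting the operator inequality $(e|T|^2e)^{p/2} \le e|T|^pe$ cleanly; one must be careful that this is an inequality between operators on the subspace $eH$ (it is false on all of $H$ if one does not compress), and that the function $t \mapsto t^{p/2}$ is operator convex only for the relevant exponent range — but here $p/2 \geq 1$, and operator convexity of $t^r$ on $[0,\infty)$ holds precisely for $r \in [1,2]$, so for large $p$ one cannot invoke operator convexity directly. This is why the safer route is the \emph{Jensen operator inequality for completely positive maps} (which only requires ordinary convexity of $t \mapsto t^{p/2}$, i.e.\ $p \geq 2$), or equivalently the scalar majorization argument, both of which sidestep operator convexity entirely. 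Once the compressed inequality is in hand, taking the (finite) trace over $eH$ is immediate and completes the proof.
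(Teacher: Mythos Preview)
Your reduction to the inequality $\tr(e|T|^2e)^{p/2} \leq \tr e|T|^p e$ is correct and is the heart of the matter. The problem is with both of your proposed routes to it. First, there is no ``Jensen operator inequality for completely positive maps'' that yields the \emph{operator} inequality $(e|T|^2e)^{p/2} \leq e|T|^p e$ from ordinary convexity alone: the Choi--Davis/Hansen--Pedersen inequality $f(\Phi(A)) \leq \Phi(f(A))$ for a unital positive map $\Phi$ genuinely requires $f$ to be operator convex, and $t \mapsto t^{p/2}$ is operator convex only for $p/2 \in [1,2]$. What ordinary convexity \emph{does} buy you is the inequality \emph{after taking the trace}, which is all you need---but then you must argue at the trace level directly rather than pass through an operator inequality that is false for $p>4$. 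Second, your majorization claim is backwards: by Schur--Horn, the eigenvalue sequence of $e|T|^2e$ majorizes its diagonal in any orthonormal basis of $eH$, not the other way around, so that route as written does not close either.

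The clean fix is to diagonalize $e|T|^2 e$ on $eH$: if $\{g_i\}$ is an orthonormal eigenbasis with eigenvalues $\mu_i = \langle g_i, |T|^2 g_i \rangle$, then scalar Jensen (for the convex function $t \mapsto t^{p/2}$, applied to the spectral measure of $|T|^2$ in the state $\langle g_i, \cdot\, g_i\rangle$) gives $\mu_i^{p/2} \leq \langle g_i, |T|^p g_i\rangle$, and summing over $i$ yields the desired trace inequality. This is essentially the mechanism of the paper's proof: there one instead diagonalizes $T^*T$ on all of $H$, obtains the pointwise bound $\langle Tev, Tev\rangle^{p/2} \leq \norm{ev}^{p-2} \langle ev, |T|^p ev\rangle$ by the same scalar convexity, and then feeds this into the $\sup$-characterization $\norm{Te}_p^p = \sup_{\{f_i\}} \sum_i \norm{Te f_i}^p$ from Lemma~\ref{lemma:schatten-norm-monotone}. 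Your corrected route avoids the appeal to that lemma and is arguably a little more direct.
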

\begin{proof}
As in Lemma~\ref{lemma:schatten-norm-monotone}, let $T^*T$ have eigenbasis $\{g_i\}$ with eigenvalues $\{\lambda_i\}$.
Then, for any $v \in H$, we have $\ip{T e v, T e v} = \ip{|T|^2 e v, e v} = \sum_j \lambda_j |\ip{ev, g_j}|^2$. 
In particular,
$$
\ip{T ev , T ev}^{p/2} =\norm{e v}^{p} \left( \sum_j  \lambda_j \frac{| \ip{ev, g_j} |^2}{\ip{ev, ev}} \right)^{p/2}.
$$
By convexity of $x \mapsto x^{p/2}$ on $\R_+$ for $p \geq 2$ we have
$$
\ip{T e v, T ev}^{p/2} \leq \norm{ev}^{p - 2} \sum_j \lambda_j^{p/2} | \ip{ev, g_j} |^2 = \norm{ev}^{p-2} \ip{ev, |T|^p ev}.
$$
But from the Lemma it then follows that
\[
\norm{Te}_p^p = \sup_{\{f_i\}} \sum_i \norm{Te f_i}^p \leq \sup_{\{f_i\}} \sum_i \norm{e f_i}^{p-2} \ip{ef_i, |T|^p ef_i} \leq \tr e |T|^p e.\qedhere
\]
\end{proof}

With respect to a choice of orthonormal basis on $H$, we can view $\sch{p}{H}$ as a completion of the direct limit $\mat \C$ of finite matrix algebras $\mat_n(\C)$ in the Schatten norm.
This can be done `uniformly', where the convergence of the limit is controlled by the trace, as we will show in Proposition~\ref{proposition:lph-uniform-completion-of-matc} below.

Given $T \in \lnmp{H}$ and a sequence of increasing finite-dimensional subspaces $PH \subset H$, the operator $P T P$ converges to $T$ \strongly{} as $P \to \id_H$.
The following Lemma allows us to control the $p$-norm of the difference when increasing the rank of $P$ by one, such that $P T P \to T$ in $p$-norm precisely when $T \in \sch{p}{H}$.

\begin{lemma} \label{lemma:lph-uniform-completion-of-matc}
  Let $p \in [1, \infty)$ and let $T \in \sch{p}{H}$.
  Let $e$ be a finite-rank projection in $H$, let $P$ be a finite-rank projection with $Pe = eP = 0$ and let $Q = P + e$.
  Then,
  \[
	\norm{QTQ - PTP}_p \leq \|T\|_p^{1/2} \left( \left( \tr e |T^*|^p e \right)^{1/2p} + \left( \tr e |T|^p e \right)^{1/2p} \right)
  \]
\end{lemma}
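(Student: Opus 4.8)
The plan is to begin by expanding the difference. Using $Q = P + e$ together with the orthogonality relations $Pe = eP = 0$ (which in particular make $Q$ a finite-rank projection), one gets $QTQ - PTP = PTe + eTP + eTe = QTe + eTP$, since $PTe + eTe = (P+e)Te = QTe$. By the triangle inequality for $\norm{\cdot}_p$ it then suffices to prove the two estimates $\norm{QTe}_p \le \|T\|_p^{1/2}(\tr e\lvert T\rvert^p e)^{1/2p}$ and $\norm{eTP}_p \le \|T\|_p^{1/2}(\tr e\lvert T^*\rvert^p e)^{1/2p}$; adding them gives the Lemma (here $T \in \sch{p}{H}$ is only used to know $\|T\|_p < \infty$).

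For the first estimate I would introduce the polar decomposition $T = V\lvert T\rvert$ and factor $QTe = (QV\lvert T\rvert^{1/2})(\lvert T\rvert^{1/2}e)$. The H\"older--von Neumann inequality with conjugate exponents $2p,2p$ (note $\tfrac{1}{2p}+\tfrac{1}{2p}=\tfrac{1}{p}$) gives $\norm{QTe}_p \le \norm{QV\lvert T\rvert^{1/2}}_{2p}\,\norm{\lvert T\rvert^{1/2}e}_{2p}$. Since $2p \ge 2$, both of these $2p$-norms are now amenable to Corollary~\ref{corollary:schatten-p-norm-of-truncated-operator-using-truncated-trace}: applied directly it gives $\norm{\lvert T\rvert^{1/2}e}_{2p}^{2p} \le \tr e\lvert T\rvert^p e$, and applied after passing to the adjoint it gives $\norm{QV\lvert T\rvert^{1/2}}_{2p}^{2p} = \norm{\lvert T\rvert^{1/2}V^*Q}_{2p}^{2p} \le \tr Q\,\lvert \lvert T\rvert^{1/2}V^*\rvert^{2p}Q$. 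Using $V^*V\lvert T\rvert^{1/2}=\lvert T\rvert^{1/2}$ one computes $\lvert \lvert T\rvert^{1/2}V^*\rvert^{2}=V\lvert T\rvert V^*=\lvert T^*\rvert$, so the right-hand side is $\tr Q\lvert T^*\rvert^pQ \le \tr\lvert T^*\rvert^p = \tr\lvert T\rvert^p = \|T\|_p^p$. Multiplying the two bounds yields the first estimate.

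The second estimate is entirely parallel: factoring $eTP = (eV\lvert T\rvert^{1/2})(\lvert T\rvert^{1/2}P)$ and again applying H\"older--von Neumann at the exponents $2p,2p$, the second factor gives $\norm{\lvert T\rvert^{1/2}P}_{2p}^{2p} \le \tr P\lvert T\rvert^pP \le \|T\|_p^p$ by Corollary~\ref{corollary:schatten-p-norm-of-truncated-operator-using-truncated-trace}, while the first factor gives, via the same adjoint manoeuvre, $\norm{eV\lvert T\rvert^{1/2}}_{2p}^{2p} = \norm{\lvert T\rvert^{1/2}V^*e}_{2p}^{2p} \le \tr e\lvert T^*\rvert^pe$. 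Collecting everything produces the stated inequality.

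I expect the only genuinely delicate point to be the bookkeeping with the partial isometry $V$ from the polar decomposition: since $V$ need not be unitary, identities such as $\lvert \lvert T\rvert^{1/2}V^*\rvert^2 = \lvert T^*\rvert$ must be justified using that the initial space of $V$ contains the range of $\lvert T\rvert$, i.e. $V^*V\lvert T\rvert = \lvert T\rvert$. Beyond that, the argument is a routine combination of Corollary~\ref{corollary:schatten-p-norm-of-truncated-operator-using-truncated-trace} with the H\"older--von Neumann inequality at the conjugate exponents $2p$ and $2p$, and the resulting bound is not sharper simply because each of these two ingredients loses a little.
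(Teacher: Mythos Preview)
Your proof is correct and follows essentially the same route as the paper: decompose $QTQ-PTP$ into two pieces each carrying a factor of $e$, split $T$ via the polar decomposition as $(V|T|^{1/2})\,|T|^{1/2}$ (the paper writes this as $T=S|T|^{1/2}$ with $|S|^2=|T|$), apply H\"older--von Neumann at exponents $2p,2p$, and finish with Corollary~\ref{corollary:schatten-p-norm-of-truncated-operator-using-truncated-trace}. The only cosmetic difference is the grouping of the three cross terms (you take $QTe+eTP$, the paper the mirror version) and your explicit bookkeeping with the partial isometry $V$, which is handled correctly.
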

\begin{proof}
	One has $QTQ - PTP = PTe + eTe + eTP = PTQ + eTP$, so that $\| QTQ - PTP \|_p \leq \|eTQ\|_p + \|PTe\|_p$ by cyclicity of the Schatten norms.
	Now compose $T$ as $T = S |T|^{\frac12}$ with $|S|^2 = |T|$.
	Then, by the Hölder-von Neumann inequality, we have $\|eTQ\|_p \leq \|eS\|_{2p} \||T|^{1/2} Q\|_{2p}$ and similarly $\|PTe\|_p \leq \|PS\|_{2p} \||T|^{1/2} e\|_{2p}$.
	Moreover, $\|PS\|_{2p} \leq \|T\|_p^{1/2}$ and $\||T|^{1/2} Q\|_{2p} \leq \|T\|_{p}^{1/2}$, and as $\|eS\|_{2p} = \|S^* e\|_{2p}$, we may now apply Corollary~\ref{corollary:schatten-p-norm-of-truncated-operator-using-truncated-trace} directly (to $\|S^* e\|_{2p}$ and $\||T|^{1/2} e\|_{2p}$) in order to finish the proof.
\end{proof}

\subsubsection{Identification of $\sch{p}{H \otimes_\C A}$ with $C_0(X, \sch{p}{H})$}

It is a well-known fact that the finite-rank operators are dense in
$\sch{p}{H}$ for any $p$, as can easily be seen from the spectral theorem for compact self-adjoint operators. 
This argument, however, does not extend uniformly to self-adjoint $T \in C_0(X, \sch{p}{H})$ unless $T$ is continuously diagonalizable.
The explicit, albeit apparently somewhat clumsy, result of
Lemma~\ref{lemma:lph-uniform-completion-of-matc} presents a solution to this problem. 
Namely, orthogonal projections $P \in B(H)$ can be lifted to constant projections $\cbstr{(X, B(H))}$. 
This allows the Lemma to be applied uniformly to all of $\cbstr{(X, B(H))}$, as we do in Proposition~\ref{proposition:lph-uniform-completion-of-matc} below.
This will then provide the main ingredient of main result of this section, to wit, the identification of $\sch{p}{H \otimes_\C A}$ with $C_0(X, \sch{p}{H})$ (Theorem~\ref{theorem:lp-iff-tr-tp-in-c0x}).

\begin{proposition} \label{proposition:lph-uniform-completion-of-matc}
  Let $T \in \cbstr{(X, B(H))}$, let $p \in [1, \infty)$ and assume that the function $\tr |T|^p \colon x \in X\mapsto \tr |T(x)|^p$ is defined everywhere and lies in $A= C_0(X)$.
  
  Let $\{e_i\}_i$ be an orthonormal basis of $H$ and let, for any $n \geq 0$, $P_n \eqdef \sum_{i=1}^n \ket{e_i} \bra{e_i}$ be the corresponding spectral projections.
  Then the operators $T_n \eqdef P_n T P_n$ (as in Lemma~\ref{lemma:lph-uniform-completion-of-matc}) are elements in $C_0(X, \sch{p}{H})$ and, for $m \geq n$,
\begin{align*}
  \norm{T_m - T_n}_p^{2p} &\leq 2^{2p-1} \sup_{x \in X} \tr |T(x)|^p \sum_{i=n+1}^m \ip{e_i, |T(x)|^p + |T^*(x)|^p e_i}
\end{align*}
\end{proposition}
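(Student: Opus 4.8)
The plan is to prove the two assertions in turn; the quantitative estimate will be a pointwise application of Lemma~\ref{lemma:lph-uniform-completion-of-matc}, followed by taking a supremum over $X$.

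\emph{Membership $T_n \in C_0(X,\sch{p}{H})$.} Since $\tr|T(x)|^p < \infty$ for each $x$, every $T(x)$ lies in $\sch{p}{H}$ with $\norm{T(x)}_p = (\tr|T(x)|^p)^{1/p} \in A = C_0(X)$ by hypothesis. As Schatten norms do not increase under compression, $\norm{T_n(x)}_p = \norm{P_n T(x) P_n}_p \le \norm{T(x)}_p$, so $x \mapsto \norm{T_n(x)}_p$ is dominated by a function in $C_0(X)$ and hence vanishes at infinity once it is known to be continuous. Continuity I would get from the observation that the matrix entries $x \mapsto \ip{e_i, T(x) e_j}$ ($1 \le i,j \le n$) are continuous by strong continuity of $T$, so $x \mapsto T_n(x)$ is continuous as a map into the finite-dimensional corner $B(P_n H) \subset \sch{p}{H}$, on which all norms are equivalent. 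Thus $T_n \in C_0(X,\sch{p}{H})$, and in particular so is $T_m - T_n$.

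\emph{The estimate.} Fix $m \ge n$ and set $e \eqdef P_m - P_n = \sum_{i=n+1}^m \ketbra{e_i}{e_i}$, a finite-rank projection with $P_n e = e P_n = 0$ and $P_n + e = P_m$. For fixed $x \in X$, $T(x) \in \sch{p}{H}$, so Lemma~\ref{lemma:lph-uniform-completion-of-matc}, applied with $P = P_n$ and $Q = P_m$, gives
\[
  \norm{T_m(x) - T_n(x)}_p \le (\tr|T(x)|^p)^{1/2p}\,\bigl( (\tr e|T^*(x)|^p e)^{1/2p} + (\tr e|T(x)|^p e)^{1/2p} \bigr),
\]
where $\tr e|T(x)|^p e = \sum_{i=n+1}^m \ip{e_i, |T(x)|^p e_i}$, and similarly with $|T^*(x)|^p$. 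Raising to the power $2p$ and using the convexity bound $(u+v)^{2p} \le 2^{2p-1}(u^{2p}+v^{2p})$ turns this into
\[
  \norm{T_m(x) - T_n(x)}_p^{2p} \le 2^{2p-1}\,\tr|T(x)|^p \sum_{i=n+1}^m \ip{e_i, (|T(x)|^p + |T^*(x)|^p)e_i}.
\]
Since $T_m - T_n \in C_0(X,\sch{p}{H})$, its norm equals $\sup_{x\in X}\norm{T_m(x) - T_n(x)}_p$, so taking the supremum over $x$ on both sides yields the stated inequality.

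I do not expect a genuine obstacle here: the analytic content is already packaged in Lemma~\ref{lemma:lph-uniform-completion-of-matc} (and, behind it, Corollary~\ref{corollary:schatten-p-norm-of-truncated-operator-using-truncated-trace}). The one point needing care is the membership step: one must really use the hypothesis $\tr|T|^p \in C_0(X)$ --- plain boundedness of $T \in \cbstr(X,B(H))$ would not force $\norm{T_n(x)}_p$ to decay at infinity --- together with the fact that on a fixed finite-dimensional corner the strong continuity of $T$ upgrades to $\sch{p}{H}$-norm continuity of $P_n T P_n$.
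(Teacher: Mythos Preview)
Your proof is correct and follows essentially the same route as the paper: pointwise application of Lemma~\ref{lemma:lph-uniform-completion-of-matc} with $P=P_n$, $Q=P_m$, $e=P_m-P_n$, followed by the convexity bound $(u+v)^{2p}\le 2^{2p-1}(u^{2p}+v^{2p})$ and a supremum over $x$. The only cosmetic difference is in the membership step: the paper simply cites Lemma~\ref{lemma:trivial-uniformly-finite-rank-so-schatten} (since $T_n$ takes values in the finite-dimensional corner $B(P_nH)$), whereas you spell out the continuity of matrix entries and use the hypothesis $\tr|T|^p\in C_0(X)$ to obtain decay at infinity---which is in fact the verification one needs anyway to see that $T_n\in C_0(X,B(P_nH))$ before invoking that lemma.
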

\begin{proof}
	Note that $T_n$ is a finite-rank operator and in particular $T_n \in C_0(X, \sch{p}{H})$, by Lemma~\ref{lemma:trivial-uniformly-finite-rank-so-schatten}.
	Moreover, by Lemma~\ref{lemma:lph-uniform-completion-of-matc} applied to the projections $P = P_n$, $Q = P_m$ and $e = P_m - P_n$, and by convexity of $x \mapsto x^{2p}$, we have $\norm{T_m - T_n}_p^{2p} \leq 2^{2p-1} \|T\|_{p}^{p} \sum_{i=n+1}^{m} \ip{e_i, |T  |^p + |T^*|^p e_i}$.
\end{proof}

  

\begin{theorem} \label{theorem:lp-iff-tr-tp-in-c0x}
  Let $T \in \cbstr{(X, B(H))}$.
  Then $T \in C_0(X, \sch{p}{H})$ if and only if $T \in \sch{p}{C_0(X,H)}$.
\end{theorem}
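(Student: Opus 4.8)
The plan is to treat the two implications separately. That $T \in C_0(X, \sch{p}{H})$ implies $T \in \sch{p}{C_0(X,H)}$ is already recorded in Remark~\ref{remark:topological-schatten-class-in-operator-schatten-class}, so all of the content is in the converse: assuming $f \eqdef \tr|T|^p$ lies in $A = C_0(X)$, one must show that $T$, viewed through the identification $\lnmp{C_0(X,H)} \simeq \cbstr(X, B(H))$, is a norm-continuous map $X \to \sch{p}{H}$ vanishing at infinity. Observe that this hypothesis is exactly what licenses the use of Proposition~\ref{proposition:lph-uniform-completion-of-matc}: the localisation of $T$ at the character corresponding to $x \in X$ is $T(x)$, so $\tr|\chi_* T|^p = \tr|T(x)|^p = f(x)$ is finite everywhere and defines an element of $A$.

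For the converse I would fix an orthonormal basis $\{e_i\}_i$ of $H$ with spectral projections $P_n = \sum_{i=1}^n \ketbra{e_i}{e_i}$, and set $T_n \eqdef P_n T P_n$. By Proposition~\ref{proposition:lph-uniform-completion-of-matc} each $T_n$ lies in $C_0(X, \sch{p}{H})$, and $\norm{T_m - T_n}_p^{2p}$ is bounded, uniformly in $m \geq n$, by $2^{2p-1}$ times $\sup_x \tr|T(x)|^p$ (finite, since $f \in C_0(X)$) times the tail $\delta_n \eqdef \sup_x \sum_{i > n} \ip{e_i, (|T(x)|^p + |T^*(x)|^p) e_i}$. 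The crux is that $\delta_n \to 0$, and for this I would invoke Dini's theorem in the abelian $C^*$-form recalled just after Proposition~\ref{proposition:t-in-lp-iff-tabs-in-lp-iff-tp-in-l1}. The functions $a_i \colon x \mapsto \ip{e_i, |T(x)|^p e_i}$ are positive, pointwise dominated by $f$, and satisfy $\sum_i a_i(x) = \tr|T(x)|^p = f(x) \in A$; the same holds for $b_i \colon x \mapsto \ip{e_i, |T^*(x)|^p e_i}$, using $\tr|T^*(x)|^p = \tr|T(x)|^p$. Granting that $a_i, b_i \in A$, Dini's theorem forces $\sum_i a_i$ and $\sum_i b_i$ to converge in norm, which is precisely the uniform vanishing of their tails, so $\delta_n \to 0$. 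Hence $(T_n)_n$ is Cauchy in the Banach space $C_0(X, \sch{p}{H})$ (Proposition~\ref{proposition:c0-x-y-is-banach}) and converges there to some $T_\infty$. Since $T_n(x) = P_n T(x) P_n \to T(x)$ in the strong operator topology for each $x$, while $T_n(x) \to T_\infty(x)$ in $\sch{p}{H}$-norm and hence in operator norm, uniqueness of strong limits gives $T_\infty = T$, so $T \in C_0(X, \sch{p}{H})$.

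The step that needs genuine care --- and the one I expect to be the main obstacle --- is checking that $a_i, b_i \in C_0(X)$. Vanishing at infinity is automatic from $0 \leq a_i \leq f \in C_0(X)$, so the real point is continuity, which --- because $T$ is only assumed \strongly{} continuous, not norm continuous --- reduces to strong-operator continuity of $x \mapsto |T(x)|^p = g(T(x)^*T(x))$, where $g(t) = t^{p/2}$. I would deduce this from two standard facts: multiplication is jointly strongly continuous on norm-bounded subsets of $B(H)$, so $x \mapsto T(x)^*T(x)$ is strongly continuous (here one uses that $T$ is \strongly{}, not merely strongly, continuous, to control the adjoint); and the continuous functional calculus is strongly continuous on a uniformly norm-bounded family of positive operators, which for $g$ on $[0, M]$ with $M \eqdef \sup_x \norm{T(x)}^2 < \infty$ follows from uniform approximation of $g$ by polynomials on $[0,M]$ together with the estimate $\norm{g(S)} \leq \sup_{[0,M]}|g|$ for positive $S$ of norm $\leq M$. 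Combining the two yields strong continuity of $x \mapsto |T(x)|^p$, hence of each $a_i$ (and of each $b_i$, applying the same to $T^*$), which closes the argument.
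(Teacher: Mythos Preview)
Your proof is correct and follows essentially the same route as the paper's: both invoke Proposition~\ref{proposition:lph-uniform-completion-of-matc} and then Dini's theorem to show that $(T_n)$ is Cauchy in $C_0(X,\sch{p}{H})$, with limit $T$. Your argument is in fact more careful on two points the paper glosses over---the strong-operator continuity of $x \mapsto |T(x)|^p$ (needed to place the diagonal functions $a_i,b_i$ in $C_0(X)$ before Dini applies) and the explicit identification of the Cauchy limit with $T$ via pointwise strong convergence.
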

\begin{proof}
  The implication $\Rightarrow$ was already established in Remark ~\ref{remark:topological-schatten-class-in-operator-schatten-class}. 
  For the converse, assume that $T \in \cbstr{(X, B(H))}$ and that $x \mapsto \tr |T(x)|^p$ lies in $C_0(X)$.
  Then, pick an orthonormal basis $\{e_i\}_i$ of $H$ and write again $T_n = P_n T P_n$ as in Proposition~\ref{proposition:lph-uniform-completion-of-matc}. 
  Since $x \mapsto \tr |T(x)|^p = \tr |T^*(x)|^p$ is in $C_0(X)$, the series $\sum_{i=1}^\infty \ip{e_i, |T(x)|^p e_i}$ and $\sum_{i=1}^\infty \ip{e_i, |T^*(x)|^p e_i}$ must converge uniformly on compact subsets of $X$ by Dini's theorem.
  That, in turn, implies that 
  $$
  \sup_{\mathclap{x \in X}} \norm{T_{n+k}(x) - T_n(x)}_p < \left[ 2^{2p-1} \sup_{\mathclap{x \in X}} \tr |T(x)|^p \sum_{\mathclap{i=n+1}}^{\mathclap{n+k}} \ip{e_i, \left(|T(x)|^p + |T^*(x)|^p\right) e_i} \right]^{\frac{1}{2p}}
  $$
goes to zero for large $n$. 
  Since $C_0(X, \sch{p}{H})$ is a Banach space, the sequence $T_n$ thus converges (to $T$) in Schatten $p$-norm, so that $T \in C_0(X, \sch{p}{H})$.
\end{proof}

\begin{remark}
  In terms of tensor products of Banach spaces, Theorem~\ref{theorem:lp-iff-tr-tp-in-c0x} translates to the statement that $\sch{p}{H \otimes_\C A} \simeq \sch{p}{H} \otimes_{\epsilon} A$, the injective tensor product.
\end{remark}

\begin{corollary} \label{corollary:standard-continous-schatten-classes-form-ideal}
The continuous Schatten class $C_0(X, \sch{p}{H})$ forms a two-sided ideal in $\cbstr{(X, B(H))}$.
\end{corollary}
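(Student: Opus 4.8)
The plan is to combine two facts already at hand: that finite-rank operators (in the Hilbert \cstar-module sense) are dense in $C_0(X, \sch{p}{H})$ for the Banach norm $\norm{\cdot}_p = \sup_x \norm{\cdot(x)}_p$, and that the multiplier algebra $\cbstr(X, B(H)) \simeq \lnmp{C_0(X, H)}$ maps finite-rank operators to finite-rank operators. For the first, the operators $T_n = P_n T P_n$ appearing in the proof of Theorem~\ref{theorem:lp-iff-tr-tp-in-c0x} are of finite rank (Lemma~\ref{lemma:trivial-uniformly-finite-rank-so-schatten}) and converge to $T$ in $\norm{\cdot}_p$, so finite-rank operators are $\norm{\cdot}_p$-dense in $C_0(X, \sch{p}{H})$. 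For the second, given $S \in \cbstr(X, B(H))$ and $v, w \in C_0(X, H)$ one has $S \ketbra{v}{w} = \ketbra{Sv}{w}$ and $\ketbra{v}{w} S = \ketbra{v}{S^* w}$, with $Sv, S^* w \in C_0(X, H)$ since $S$ and $S^*$ are adjointable endomorphisms; hence left and right multiplication by $S$ preserve the finite-rank operators, which lie in $C_0(X, \sch{p}{H})$ as recorded above.

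Next I would invoke the pointwise bound $\norm{ST}_p = \sup_x \norm{S(x) T(x)}_p \leq \norm{S}\,\sup_x \norm{T(x)}_p = \norm{S}\,\norm{T}_p$ (and symmetrically $\norm{TS}_p \leq \norm{S}\,\norm{T}_p$), valid because the Schatten classes on $H$ are ideals in $B(H)$. Thus $F \mapsto SF$ is a $\norm{\cdot}_p$-bounded linear map from the dense subspace of finite-rank operators into the Banach space $C_0(X, \sch{p}{H})$, so it extends uniquely to a bounded operator $\widehat{S}$ on $C_0(X, \sch{p}{H})$. It remains to identify $\widehat{S}T$ with the pointwise product: choosing finite-rank $F_k \to T$ in $\norm{\cdot}_p$, one has $SF_k \to \widehat{S}T$ in $\norm{\cdot}_p$ and hence $S(x) F_k(x) \to (\widehat{S}T)(x)$ in $\sch{p}{H}$ for each $x$, while the same ideal bound gives $S(x) F_k(x) \to S(x) T(x)$ in $\sch{p}{H}$; uniqueness of limits forces $(\widehat{S}T)(x) = S(x) T(x)$, so the map $x \mapsto S(x) T(x)$ genuinely lies in $C_0(X, \sch{p}{H})$. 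The argument for $TS$ is the mirror image, using right multiplication, and the displayed bounds show the ideal is even contractive in the multiplier norm.

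An equivalent, more hands-on route avoids the density/extension step by working with nets: for $x_i \to x_0$ in $X$ one splits $\norm{S(x_i) T(x_i) - S(x_0) T(x_0)}_p \leq \norm{S}\,\norm{T(x_i) - T(x_0)}_p + \norm{(S(x_i) - S(x_0)) T(x_0)}_p$; the first term vanishes by continuity of $T$, and the second by approximating the compact operator $T(x_0)$ in $\norm{\cdot}_p$ by a finite-rank $F = \sum_k \ketbra{\xi_k}{\eta_k}$ and estimating $\norm{(S(x_i) - S(x_0))F}_p \leq \sum_k \norm{(S(x_i) - S(x_0))\xi_k}\,\norm{\eta_k} \to 0$, using the uniform bound $\norm{S(x_i)} \leq \norm{S}$. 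Continuity of $x \mapsto S(x) T(x)$ plus $\norm{S(x) T(x)}_p \leq \norm{S}\,\norm{T(x)}_p$ with $\norm{T(\cdot)}_p \in C_0(X)$ then gives membership in $C_0(X, \sch{p}{H})$; $TS$ is handled by passing to adjoints, which are isometric for $\norm{\cdot}_p$ and for which $S^*$ is again \strongly{} continuous.

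The main obstacle — and the only point that is not pure bookkeeping — is that $S \in \cbstr(X, B(H))$ is merely \strongly{} continuous, so the naive estimate $\norm{(S(x_i) - S(x_0)) T(x_0)}_p \leq \norm{S(x_i) - S(x_0)}\,\norm{T(x_0)}_p$ is worthless. What rescues the argument is that a bounded, \strongly{} convergent net, composed with a \emph{fixed} compact (here Schatten-class) operator on either side, converges in norm, and indeed in every Schatten norm; the finite-rank approximation of $T(x_0)$ is exactly what converts this heuristic into the estimate above, and everything else follows from the pointwise H\"older--von Neumann and ideal inequalities already in hand.
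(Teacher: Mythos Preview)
Your first approach is essentially the paper's own proof: approximate $T$ in $\norm{\cdot}_p$ by the finite-rank truncations $T_n = P_n T P_n$ from Proposition~\ref{proposition:lph-uniform-completion-of-matc}, observe that left and right multiplication by an element of $\cbstr(X, B(H)) \simeq \lnmp{C_0(X,H)}$ preserves finite-rank operators (hence lands back in $C_0(X, \sch{p}{H})$), and use the pointwise ideal bound together with completeness of $C_0(X, \sch{p}{H})$ to pass to the limit. You in fact spell out more than the paper does --- notably the identification of the abstract limit $\widehat{S}T$ with the pointwise product $x \mapsto S(x)T(x)$, which the paper leaves implicit.

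Your second route (the net argument with a finite-rank approximation of $T(x_0)$ at a fixed base point) is a genuinely different, more self-contained argument not in the paper. It trades the global finite-rank approximation of $T$ for a local one at each fiber, and thereby bypasses Proposition~\ref{proposition:lph-uniform-completion-of-matc} entirely; what it buys is a direct explanation of \emph{why} mere $*$-strong continuity of $S$ suffices --- namely that a bounded strongly convergent net composed with a fixed Schatten-class operator converges in Schatten norm --- which is exactly the point you isolate in your final paragraph. Both arguments are correct.
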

\begin{proof}
  Let $T \in C_0(X,\sch{p}{H})$ and let $T' \in \cbstr{(X, B(H))}$.
  Then, for any basis $\{e_i\}_i$ of $H$ the operators $T_n \eqdef P_n T P_n$ ($p \geq 2$) or $T_n \eqdef P_n S P_n |T|^{\frac12} P_n$ ($p < 2$), as in Proposition~\ref{proposition:lph-uniform-completion-of-matc}, converge to $T$ in the continuous Schatten $p$-norm.
  Now, note that $\norm{T' (T_m - T_n)}_p \leq \norm{T'} \norm{T_m - T_n}_p$ and $\norm{(T_m - T_n)T'}_p \leq \norm{T'} \norm{T_m - T_n}_p$; thus, $T' T_n$ and $T_n T'$ converge to $T' T$ and $T T'$, respectively, in the norm of $C_0(X, \sch{p}{H})$.
\end{proof}

\begin{remark}
  This result does not follow directly from the fact that $\sch{p}{H}$ is an ideal of $B(H)$ equipped with a Banach norm such that the inclusion into $B(H)$ is continuous: each such ideal induces a two-sided ideal $C_b(X, I) \subset C_b(X, B(H))$, but that is not necessarily an ideal of $\cbstr(X, B(H))$.
  An easy counterexample is given by $I = B(H)$ itself.
\end{remark}

\begin{corollary} \label{corollary:c0x-schp-compact}
  The continuous Schatten class $C_0(X, \sch{p}{H})$ is contained in the compact operators on the Hilbert $C^*$-module $C_0(X,H)$.
\end{corollary}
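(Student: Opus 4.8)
The plan is to exhibit any $T \in C_0(X, \sch{p}{H})$ as the operator-norm limit of finite-rank operators on the Hilbert module $C_0(X,H)$, using the finite-rank approximations already constructed above. First I would record the elementary observation that the inclusion $C_0(X, \sch{p}{H}) \hookrightarrow \cbstr{(X, B(H))} \simeq \lnmp{C_0(X,H)}$ is norm-decreasing: since the operator norm is dominated fibrewise by the Schatten $p$-norm, one has $\norm{S}_{\lnmp{C_0(X,H)}} = \sup_{x \in X} \norm{S(x)}_{B(H)} \leq \sup_{x \in X} \norm{S(x)}_p = \norm{S}_p$ for every $S \in C_0(X, \sch{p}{H})$. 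Hence convergence in the continuous Schatten $p$-norm entails convergence in $\lnmp{C_0(X,H)}$.

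Next, for $T \in C_0(X, \sch{p}{H})$ pick an orthonormal basis $\{e_i\}_i$ of $H$ and form the truncations $T_n \eqdef P_n T P_n$ as in Proposition~\ref{proposition:lph-uniform-completion-of-matc}, whose hypothesis is satisfied by Remark~\ref{remark:topological-schatten-class-in-operator-schatten-class}. By that Proposition---exactly as in the proof of Theorem~\ref{theorem:lp-iff-tr-tp-in-c0x}---the sequence $(T_n)$ is Cauchy, hence convergent, in the Banach space $C_0(X, \sch{p}{H})$, and since it converges \strongly{} fibrewise to $T$ its limit is $T$. (For $p < 2$ one may instead use the approximations $P_n S P_n |T|^{1/2} P_n$ employed in Corollary~\ref{corollary:standard-continous-schatten-classes-form-ideal}; either choice works.)

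Finally, each $T_n$ has finite-dimensional range and so, by Lemma~\ref{lemma:trivial-uniformly-finite-rank-so-schatten}, is a finite-rank operator on the Hilbert module $C_0(X,H)$; in particular $T_n \in \cptmp{C_0(X,H)}$. By the first step, $T_n \to T$ in $\lnmp{C_0(X,H)}$, and since $\cptmp{C_0(X,H)}$ is norm-closed in $\lnmp{C_0(X,H)}$ we conclude $T \in \cptmp{C_0(X,H)}$, as claimed.

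There is no serious obstacle here: all the analytic content is already contained in Proposition~\ref{proposition:lph-uniform-completion-of-matc} and Lemma~\ref{lemma:trivial-uniformly-finite-rank-so-schatten}. The only point requiring attention is the norm comparison in the first step, which is what transfers the finite-rank approximation---valid a priori only in Schatten norm---into one valid in the operator norm; once that is in place, closedness of the compacts finishes the argument immediately.
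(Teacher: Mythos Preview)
Your proof is correct and follows essentially the same route as the paper's: use the truncations $T_n = P_n T P_n$ from Proposition~\ref{proposition:lph-uniform-completion-of-matc}, invoke Lemma~\ref{lemma:trivial-uniformly-finite-rank-so-schatten} to see that each $T_n$ is finite rank in the Hilbert-module sense, and then pass from Schatten-norm convergence to operator-norm convergence to conclude compactness. You are simply more explicit than the paper about the norm comparison $\norm{\cdot} \leq \norm{\cdot}_p$ and about why the limit of $(T_n)$ is $T$; the parenthetical about an alternative approximation for $p<2$ is unnecessary, since Proposition~\ref{proposition:lph-uniform-completion-of-matc} already covers all $p \in [1,\infty)$ with $T_n = P_n T P_n$.
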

\begin{proof} The operators $T_n$ of Proposition~\ref{proposition:lph-uniform-completion-of-matc} are of finite rank in the Hilbert module sense, because they are contained in $C_0(X, B(V))$ for some finite-dimensional $V \subset H$ so that we can apply Lemma~\ref{lemma:trivial-uniformly-finite-rank-so-schatten}.
  As $T_n \to T$ for $T \in C_0(X, \sch{p}{H})$ in the Schatten $p$-norm, $T_n \to T$ in operator norm as well.
  We conclude that $T$ is compact in the Hilbert module sense.
\end{proof}

The following slight strengthening of Corollary~\ref{corolllary:s-in-Lp-if-sp-leq-tp-in-Lp} to $C_0(X, \sch{p}{H})$ now translates to $\sch{p}{C_0(X,H)}$.

\begin{corollary} \label{corollary:lp-order-ideal-in-compacts}
  If $0 \leq S \leq T \in \cbstr{(X,B(H))}$ and $T \in \sch{p}{C_0(X,H)}$ and additionally we have $S \in C_b^{\textup{norm}}(X, B(H))$, then $S \in \sch{p}{C_0(X,H)}$ and $\tr |S|^p \leq \tr |T|^p$.
\end{corollary}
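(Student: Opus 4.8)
The plan is to prove the corresponding statement inside the Banach space $C_0(X, \sch{p}{H})$ and then transport it to $\sch{p}{C_0(X,H)}$ through the identification of Theorem~\ref{theorem:lp-iff-tr-tp-in-c0x}. So I assume $0 \le S \le T$ in $\cbstr(X, B(H))$ with $T \in \sch{p}{C_0(X,H)}$ (hence $T \in C_0(X, \sch{p}{H})$ by Theorem~\ref{theorem:lp-iff-tr-tp-in-c0x}) and $S$ norm-continuous. The pointwise Schatten estimate is immediate from Lemma~\ref{lemma:schatten-norm-monotone}(\ref{lemma:schatten-norm-monotone:item:3}): applied to $S(x)^{1/2}$ with $2p$ in the rôle of $p$ --- legitimate because $|S(x)^{1/2}|^2 = S(x) \le T(x) \in \sch{p}{H}$ --- it gives $\tr S(x)^p = \norm{S(x)^{1/2}}_{2p}^{2p} \le \norm{T(x)}_p^p = \tr T(x)^p$ for every $x$, hence $\norm{S(x)}_p \le \norm{T(x)}_p$, and once $S \in \sch{p}{C_0(X,H)}$ is in hand this gives $\tr |S|^p \le \tr |T|^p$ in $C_0(X)$. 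It thus remains to show $S \in C_0(X, \sch{p}{H})$.

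For that I fix an orthonormal basis $\{e_i\}$ of $H$, set $P_m = \sum_{i \le m}\ket{e_i}\bra{e_i}$ and $S_m := P_m S P_m$. Each $S_m$ is norm-continuous with finite-rank values and vanishes at infinity, since $\norm{S_m(x)} \le \norm{S(x)} \le \norm{S(x)}_p \le \norm{T(x)}_p \in C_0(X)$; hence $S_m \in C_0(X, \sch{p}{H})$ by Lemma~\ref{lemma:trivial-uniformly-finite-rank-so-schatten}. The decisive point --- and, I expect, the only real obstacle --- is that $S_m \to S$ \emph{uniformly} in Schatten $p$-norm. Over a single point this would follow by diagonalizing $S(x)$; but $S$ need not be continuously diagonalizable, Dini's theorem is unavailable for the not-yet-continuous function $\tr S(x)^p$, and (unlike in the Hilbert-space case) $\ip{e_i, S(x)^p e_i}$ is \emph{not} dominated termwise by $\ip{e_i, T(x)^p e_i}$. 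The resolution is to import uniform finite-rank approximability from $T$: given $\epsilon > 0$, Proposition~\ref{proposition:lph-uniform-completion-of-matc} together with Dini (exactly as in the proof of Theorem~\ref{theorem:lp-iff-tr-tp-in-c0x}) yields $n$ with $\sup_x \norm{T(x) - P_n T(x) P_n}_p < \epsilon$. Put $R_n(x) := |T(x) - P_n T(x) P_n| \ge 0$, so that $\norm{R_n(x)}_p < \epsilon$ and $0 \le S(x) \le T(x) \le P_n T(x) P_n + R_n(x)$; for $m \ge n$, compression by $P_m^\perp$ annihilates the finite-rank summand ($P_m^\perp P_n = 0$), leaving $0 \le P_m^\perp S(x) P_m^\perp \le P_m^\perp R_n(x) P_m^\perp$, whose Schatten norm is $< \epsilon$.

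Since $|S(x)^{1/2} P_m^\perp|^2 = P_m^\perp S(x) P_m^\perp$, a second use of Lemma~\ref{lemma:schatten-norm-monotone}(\ref{lemma:schatten-norm-monotone:item:3}) (again with $2p$ for $p$) gives $\norm{S(x)^{1/2} P_m^\perp}_{2p}^{2p} \le \norm{P_m^\perp R_n(x) P_m^\perp}_p^p < \epsilon^p$, and the Hölder--von Neumann inequality then yields, using $S(x) - S_m(x) = S(x) P_m^\perp + P_m^\perp S(x) P_m$ and $\norm{S(x)^{1/2}}_{2p} = (\tr S(x)^p)^{1/2p}$,
\[
\norm{S(x) - S_m(x)}_p \le 2\norm{S(x) P_m^\perp}_p \le 2\norm{S(x)^{1/2}}_{2p}\, \norm{S(x)^{1/2} P_m^\perp}_{2p} \le 2\Big(\sup_y \tr T(y)^p\Big)^{1/2p} \epsilon^{1/2}
\]
for all $m \ge n$, uniformly in $x$ (and $\sup_y \tr T(y)^p < \infty$ because $\tr T^p \in C_0(X)$). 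As $\epsilon$ was arbitrary, $(S_m)$ is Cauchy in $C_0(X, \sch{p}{H})$, which is complete by Proposition~\ref{proposition:c0-x-y-is-banach}, and its limit agrees pointwise with $S$; hence $S \in C_0(X, \sch{p}{H})$. Finally $C_0(X, \sch{p}{H}) \subseteq \sch{p}{C_0(X,H)}$ by Remark~\ref{remark:topological-schatten-class-in-operator-schatten-class}, so $S \in \sch{p}{C_0(X,H)}$ and, with the pointwise estimate above, $\tr|S|^p \le \tr|T|^p$.
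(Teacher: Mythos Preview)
Your proof is correct and takes a genuinely different route from the paper's. The paper argues via the continuity of individual eigenvalues (Kato's perturbation theory, cf.\ Lemma~\ref{lemma:eigenvalues-individually-continuous}): since both $S$ and $T$ are pointwise compact and norm-continuous, the ordered eigenvalue functions $\lambda_k(S)$, $\lambda_k(T)$ lie in $C_0(X)$; the min-max theorem gives $\lambda_k(S) \le \lambda_k(T)$ pointwise, and Dini applied to the series $\sum_k \lambda_k(T)^p$ (which converges in $C_0(X)$) forces $\sum_k \lambda_k(S)^p$ to be Cauchy in $C_0(X)$ as well, whence $\tr S^p \in C_0(X)$. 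Your argument instead stays entirely inside the finite-rank approximation framework of Proposition~\ref{proposition:lph-uniform-completion-of-matc}: the key new idea is the operator inequality $P_m^\perp S P_m^\perp \le P_m^\perp\,|T - P_nTP_n|\,P_m^\perp$ for $m \ge n$ (obtained from $T \le P_nTP_n + |T - P_nTP_n|$, valid since $T - P_nTP_n$ is self-adjoint), which imports the uniform $p$-norm approximability of $T$ to control the tails of $S$, bypassing eigenvalue continuity altogether.

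The paper's approach is shorter once the eigenvalue-continuity lemma is available, while yours reuses only the machinery already built in Section~\ref{section:schatten-class-standard-module}. It is also worth noting that your argument does not essentially use the norm-continuity hypothesis on $S$: the truncation $S_m = P_m S P_m$ is already norm-continuous from $*$-strong continuity of $S$ alone (the matrix entries $\ip{e_i, S(\cdot) e_j}$ are continuous and $P_m$ has finite rank), and the hypothesis is invoked nowhere else. With that small observation your method in fact dispenses with the extra assumption on $S$ --- a slight strengthening of the Corollary, and in mild tension with the Remark that follows it in the paper.
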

\begin{proof}
  The operators $S, T$ are pointwise compact, norm continuous (this is where we use the additional assumption on $S$) and positive, so that their individual eigenvalues $\lambda_k$ (ordered decreasingly) are continuous (\cite[IV.3.5]{MR1335452}, see also Lemma~\ref{lemma:eigenvalues-individually-continuous} below).

  Then, by the min-max theorem, the $k$'th singular value of $\chi_* S$ is bounded by the $k$'th singular value of $\chi_* T$, so that the same holds for their $p$'th powers. As the Schatten norm of $\chi_* T$ is the sum of those $p$'th powers, which converges to a continuous function, the convergence must be uniform by Dini's theorem.
  Thus, the series $\sum_k \lambda_k(T)^p$ of elements of $A$ is Cauchy, so that the series $\sum_k \lambda_k(S)^p$ must be Cauchy as well.
  We conclude that $x \mapsto \norm{\chi_* S}_p$ lies in $A$.
\end{proof}

\begin{remark}
  The additional assumption in Corollary~\ref{corollary:lp-order-ideal-in-compacts} is necessary because the positive compact operators on a Hilbert $C^*$-module, in contrast to those on a Hilbert space, may not necessarily form an order ideal: there is an additional continuity requirement on the (pointwise compact) localizations.
  In contrast, as in Corollary~\ref{corolllary:s-in-Lp-if-sp-leq-tp-in-Lp}, the positive trace-class operators on a Hilbert $C^*$-module \emph{do} form an order ideal.
\end{remark}

\subsection{Properties of the Schatten classes on Hilbert modules}
We now return to the general setup of countably generated Hilbert $C^*$-modules over commutative $C^*$-algebras. For the case of the standard module $l^2(A)$ with $A= C_0(X)$ Theorem~\ref{theorem:lp-iff-tr-tp-in-c0x} shows that $\sch{p}{l^2(A)}$ is a Banach space and a two-sided ideal of $\lnmp{l^2(A)}$ that is moreover contained in $\cptmp{l^2(A)}$.
These are very desirable properties for general, countably generated Hilbert $A$-modules. 
Fortuitously, the existence of frames (Proposition~\ref{proposition:existence-of-frames}) and the pull-back criterion of Theorem~\ref{theorem:nonstandard-schatten-class-is-pullback-by-frame} allows us to easily establish equivalent properties of $\sch{p}{E_A}$ for all countably generated Hilbert $A$-modules $E_A$.

\begin{theorem} \label{theorem:schp-is-ideal-and-norm-has-properties}
  The space $\sch{p}{E_A}$ is a two-sided ideal of $\lnmp{E_A}$ that is contained in $\cptmp{E_A}$.
\end{theorem}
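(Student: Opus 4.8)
The plan is to fix a frame of $E_A$, use the associated frame transform to embed $E_A$ as a complemented submodule of the standard module $l^2(A)$, and then transport the ideal and compactness properties of $\sch{p}{l^2(A)}$ — which were established in Section~\ref{section:schatten-class-standard-module} — back to $E_A$.

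Pick a frame $\fram$ of $E_A$ (Proposition~\ref{proposition:existence-of-frames}) with frame transform $\theta_\fram \colon E_A \to l^2(A)$; by Proposition~\ref{proposition:frame-transform} it is adjointable with $\theta_\fram^* \theta_\fram = \id_{E_A}$, so that $P \eqdef \theta_\fram \theta_\fram^* \in \lnmp{l^2(A)}$ is a projection and $\theta_\fram$ restricts to a unitary $E_A \simeq P\,l^2(A)$. Consider the linear maps
\[
  \Phi \colon \lnmp{E_A} \to \lnmp{l^2(A)}, \ T \mapsto \theta_\fram T \theta_\fram^*, \qquad
  \Psi \colon \lnmp{l^2(A)} \to \lnmp{E_A}, \ S \mapsto \theta_\fram^* S \theta_\fram .
\]
From $\theta_\fram^* \theta_\fram = \id_{E_A}$ one gets $\Psi \circ \Phi = \id$, so $\Phi$ is injective, and Theorem~\ref{theorem:nonstandard-schatten-class-is-pullback-by-frame} says exactly that $\sch{p}{E_A} = \Phi^{-1}\bigl(\sch{p}{l^2(A)}\bigr)$. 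In particular $\sch{p}{E_A}$ is a linear subspace of $\lnmp{E_A}$. The only nontrivial inputs are now that $\sch{p}{l^2(A)} = \sch{p}{C_0(X, l^2)}$ is a two-sided ideal of $\lnmp{l^2(A)}$ (Theorem~\ref{theorem:lp-iff-tr-tp-in-c0x} and Corollary~\ref{corollary:standard-continous-schatten-classes-form-ideal}, via Proposition~\ref{prop:endo-free}) and that it is contained in $\cptmp{l^2(A)}$ (Corollary~\ref{corollary:c0x-schp-compact}).

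For the ideal property, let $T \in \sch{p}{E_A}$ and $S \in \lnmp{E_A}$. Inserting $\id_{E_A} = \theta_\fram^* \theta_\fram$ yields $\Phi(ST) = \Phi(S)\,\Phi(T)$ and $\Phi(TS) = \Phi(T)\,\Phi(S)$. Since $\Phi(S) \in \lnmp{l^2(A)}$, $\Phi(T) \in \sch{p}{l^2(A)}$ and the latter is a two-sided ideal, both products lie in $\sch{p}{l^2(A)}$, so $ST, TS \in \Phi^{-1}(\sch{p}{l^2(A)}) = \sch{p}{E_A}$. For compactness, let $T \in \sch{p}{E_A}$; then $\Phi(T) \in \sch{p}{l^2(A)} \subseteq \cptmp{l^2(A)}$ and $T = \Psi(\Phi(T)) = \theta_\fram^* \Phi(T) \theta_\fram$. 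For a rank-one operator one computes $\theta_\fram^* \ketbra{v}{w} \theta_\fram = \ketbra{\theta_\fram^* v}{\theta_\fram^* w}$, again a rank-one (hence compact) operator on $E_A$; since $\Psi$ is contractive and $\cptmp{l^2(A)}$ is the closed linear span of such rank-one operators, $\Psi$ maps $\cptmp{l^2(A)}$ into $\cptmp{E_A}$, whence $T \in \cptmp{E_A}$.

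There is no real obstacle in this argument: all of the analytic content — the identification of $\sch{p}{l^2(A)}$ with the continuous Schatten class $C_0(X, \sch{p}{l^2})$, its ideal property, and its containment in the compacts — was already established for the standard module, and what remains is the purely formal transfer along $\theta_\fram$. The one point deserving attention is that $\sch{p}{E_A}$ is introduced in Definition~\ref{definition:minimal-definition-of-schatten-class} merely as a \emph{set} of operators; its linearity and the frame-independence of the manipulations above both rest on the pullback characterization of Theorem~\ref{theorem:nonstandard-schatten-class-is-pullback-by-frame}.
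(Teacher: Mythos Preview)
Your proof is correct and follows essentially the same strategy as the paper: fix a frame, use the frame transform to identify $\sch{p}{E_A}$ with the preimage of $\sch{p}{l^2(A)}$ under the $\ast$-homomorphism $T \mapsto \theta_\fram T \theta_\fram^*$, and then import linearity, the ideal property, and containment in the compacts from the standard-module results (Theorem~\ref{theorem:lp-iff-tr-tp-in-c0x}, Corollaries~\ref{corollary:standard-continous-schatten-classes-form-ideal} and~\ref{corollary:c0x-schp-compact}). Your treatment of the compactness transfer via $\theta_\fram^* \ketbra{v}{w} \theta_\fram = \ketbra{\theta_\fram^* v}{\theta_\fram^* w}$ is exactly the paper's observation $\theta_\fram^* (\ketbra{e_i}{e_j}) \theta_\fram = \ketbra{\fram_i}{\fram_j}$, stated in slightly greater generality.
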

\begin{proof}
  Choose a frame $\fram$ of $E_A$ and let $\phi_\fram$ be the $\ast$-homomorphism induced by the frame transform: $\phi_\fram \colon \lnmp{E_A} \to \lnmp{l^2(A)}, T \mapsto \theta_\fram T \theta_\fram^*$.
  By Theorem~\ref{theorem:nonstandard-schatten-class-is-pullback-by-frame}, we have $T \in \sch{p}{E_A}$ if and only if $\phi_\fram (T) \in \sch{p}{l^2(A)}$.
  By Theorem~\ref{theorem:lp-iff-tr-tp-in-c0x}, then, $\sch{p}{E_A}$ is closed under finite linear combinations.
  Moreover, for $S \in \lnmp{E_A}, T \in \sch{p}{E_A}$, we have $\phi_\fram(ST) \in \sch{p}{l^2(A)}$ and $\phi_\fram(TS) \in \sch{p}{l^2(A)}$ by Corollary~\ref{corollary:standard-continous-schatten-classes-form-ideal}.
  We conclude that $\sch{p}{E_A}$ is a two-sided ideal.

  Moreover, as in the previous paragraph, $T \in \sch{p}{E_A}$ iff $\theta_\fram T \theta_\fram^* \in \sch{p}{l^2(A)} \subset \cptmp{l^2(A)}$ by Corollary~\ref{corollary:c0x-schp-compact}. But if $\theta_\fram T \theta_\fram^*$ is compact, then the operator $T = \theta_\fram^* \theta_\fram T \theta_\fram^* \theta_\fram$ is compact as well (essentially because $\theta_\fram^*( \ketbra{e_i}{e_j})\theta_\fram = \ketbra{\fram_i}{\fram_j}$ for any $i,j \in \N$). 
\end{proof}

\begin{remark}
\label{remark:comparison-to-dixmier-continuous-trace}
Dixmier's definition of \emph{continuous-trace} \cstar-algebras \cite[Chapter 4.5]{MR0458185} applies to $\cptmp{E_A}$.
By Theorem~\ref{theorem:schp-is-ideal-and-norm-has-properties}, the corresponding trace and Hilbert-Schmidt classes agree with our $\sch{1}{E_A}$ and $\sch{2}{E_A}$ respectively.
The projections satisfying Fell's criterion are then the \emph{compact} finite-rank ones, that is, those that correspond to finitely generated projective modules.
\end{remark}

\begin{remark}
  In the light of Theorem~\ref{theorem:schp-is-ideal-and-norm-has-properties}, we have obtained an \emph{a fortiori} method of determine whether a positive operator $T$ on a Hilbert \cstar-module over an abelian $C^*$-algebra is compact: it suffices that $\tr T^p$ lies in $A$ for some $p \geq 1$.
  Compare e.g. the proof of \cite[Proposition 7]{kaad2017factorization} and [ibid., Remark 6] to see that showing such compactness directly can be a nontrivial undertaking.
\end{remark}

Using Theorem~\ref{theorem:nonstandard-schatten-class-is-pullback-by-frame}, we can pull back the Banach norm on $C_0(X, \sch{p}{H}) \simeq \sch{p}{H \otimes_\C A}$ to $\sch{p}{E_A}$, and this turns out rather well:

\begin{theorem}
  \label{thm:norm-prop-Lp}
  The function $\norm{\cdot}_p \colon T \mapsto \norm{\tr |T|^p}_A^{1/p}$ is a norm that turns $\sch{p}{E_A}$ into a normed vector space.
  Moreover, for all $T \in \sch{p}{E_A}$,
  \begin{enumerate}
  \item $\norm{T}_p = \sup_{\chi \in \gfdual{A}} \norm{\chi_* T}_p$
  \item $\norm{T^*}_p = \norm{T}_p$
  \item $\norm{ST}_p \leq \norm{S} \norm{T}_p$ for all $S \in \lnmp{E_A}$ \label{inenum:ideal-norm}
  \item $\norm{T} \leq \norm{T}_p$
  \item For $p, q, r \geq 1$, if $S \in \sch{q}{E_A}$ and $T \in \sch{p}{E_A}$ with $\frac1{p} + \frac1{q} = 1/r$ then $ST \in \sch{r}{E_A}$ and $\norm{ST}_{r} \leq \norm{S}_q \norm{T}_p$. \label{inenum:hoelder}
  \end{enumerate}
Moreover, $\sch{p}{E_A}$ is a Banach space.
\end{theorem}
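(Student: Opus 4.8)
The plan is to transport the whole statement to the standard module $l^2(A)$ via a frame transform, exploiting that Theorem~\ref{theorem:lp-iff-tr-tp-in-c0x} has already identified $\sch{p}{l^2(A)}$ with the Banach space of continuous Schatten-class families on $\gfdual{A}$, and that this space is known to be a two-sided ideal behaving well under products. Concretely, I would fix a frame $\fram$ of $E_A$ (Proposition~\ref{proposition:existence-of-frames}) with frame transform $\theta_\fram$ and set $\phi_\fram\colon \lnmp{E_A}\to\lnmp{l^2(A)}$, $T\mapsto \theta_\fram T\theta_\fram^*$, as in the proof of Theorem~\ref{theorem:schp-is-ideal-and-norm-has-properties}. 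Since $\theta_\fram^*\theta_\fram=\id_{E_A}$ (Proposition~\ref{proposition:frame-transform}) one has $|\phi_\fram(T)|^p=\theta_\fram|T|^p\theta_\fram^*$, so Theorem~\ref{theorem:nonstandard-schatten-class-is-pullback-by-frame} gives both $T\in\sch{p}{E_A}\iff\phi_\fram(T)\in\sch{p}{l^2(A)}$ and the norm identity $\norm{T}_p=\norm{\phi_\fram(T)}_p$. Moreover $\phi_\fram$ is injective (as $T=\theta_\fram^*\phi_\fram(T)\theta_\fram$), and the map $\psi\colon R\mapsto\theta_\fram^*R\theta_\fram$ sends $\sch{p}{l^2(A)}$ back into $\sch{p}{E_A}$, because $\phi_\fram\psi(R)=(\theta_\fram\theta_\fram^*)R(\theta_\fram\theta_\fram^*)$ is the product of $R\in\sch{p}{l^2(A)}$ with the projection $\theta_\fram\theta_\fram^*$ and hence lies in $\sch{p}{l^2(A)}$; thus $\psi\circ\phi_\fram=\id$ on $\sch{p}{E_A}$, and $\norm{\psi(R)}_p=\norm{\phi_\fram\psi(R)}_p\leq\norm{R}_p$ by the fiberwise ideal estimate for Hilbert-space Schatten norms.

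First I would record (1): by Definition~\ref{definition:minimal-definition-of-schatten-class} the element $\tr|T|^p\in A$ has Gelfand transform $\chi\mapsto\tr|\chi_*T|^p=\norm{\chi_*T}_p^p$, so $\norm{T}_p^p=\norm{\tr|T|^p}_A=\sup_\chi\norm{\chi_*T}_p^p$, which is (1) after taking $p$-th roots. Because $\sch{p}{E_A}$ is closed under finite linear combinations (Theorem~\ref{theorem:schp-is-ideal-and-norm-has-properties}), (1) together with subadditivity and homogeneity of the Hilbert-space Schatten norms makes $\norm{\cdot}_p$ a seminorm, and it is a norm since $\norm{T}_p=0$ forces $\chi_*T=0$ for all $\chi$, hence $T=0$. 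Properties (2) and (4) then reduce to pointwise facts: $\chi_*$ is a $*$-homomorphism (Proposition~\ref{proposition:star-homomorphism-as-inner-product-of-modules}), so $\norm{\chi_*(T^*)}_p=\norm{(\chi_*T)^*}_p=\norm{\chi_*T}_p$ gives (2); and $\norm{T}_{\lnmp{E_A}}=\sup_\chi\norm{\chi_*T}$ (a standard fact for Hilbert $C_0(X)$-modules, also obtainable from Proposition~\ref{prop:endo-free} upon realizing $\lnmp{E_A}$ as a hereditary subalgebra of $\lnmp{C_0(\gfdual{A},H)}$ via Proposition~\ref{proposition:e-is-gamma0-x-p}) together with the fiberwise bound $\norm{\chi_*T}\leq\norm{\chi_*T}_p$ gives (4). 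For (3), $ST\in\sch{p}{E_A}$ by the ideal property (Theorem~\ref{theorem:schp-is-ideal-and-norm-has-properties}), and (1) plus the fact that $\chi_*$ is norm-decreasing gives $\norm{ST}_p=\sup_\chi\norm{(\chi_*S)(\chi_*T)}_p\leq\sup_\chi\norm{\chi_*S}\,\norm{\chi_*T}_p\leq\norm{S}\,\norm{T}_p$.

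For the Hölder--von Neumann inequality (5) a purely pointwise argument is awkward, because continuity in $\chi$ of $\tr|\chi_*(ST)|^r$ is not evident; so I would again route through $l^2(A)$. Since $\theta_\fram^*\theta_\fram=\id$ one has $\phi_\fram(ST)=\phi_\fram(S)\phi_\fram(T)$, with $\phi_\fram(S)\in\sch{q}{l^2(A)}$ and $\phi_\fram(T)\in\sch{p}{l^2(A)}$; under the identification of Theorem~\ref{theorem:lp-iff-tr-tp-in-c0x} these are continuous Schatten-class families on $\gfdual{A}$, so the continuous Hölder--von Neumann inequality (Proposition~\ref{proposition:continuous-hoelder-von-neumann-inequality}) yields $\phi_\fram(ST)\in\sch{r}{l^2(A)}$ with $\norm{\phi_\fram(ST)}_r\leq\norm{\phi_\fram(S)}_q\norm{\phi_\fram(T)}_p$. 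Pulling this back through Theorem~\ref{theorem:nonstandard-schatten-class-is-pullback-by-frame} and the norm identity $\norm{\cdot}_p=\norm{\phi_\fram(\cdot)}_p$ gives $ST\in\sch{r}{E_A}$ and $\norm{ST}_r\leq\norm{S}_q\norm{T}_p$.

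Finally, completeness: $\phi_\fram$ is a linear isometry of $(\sch{p}{E_A},\norm{\cdot}_p)$ into the Banach space $\sch{p}{l^2(A)}$ (Theorem~\ref{theorem:lp-iff-tr-tp-in-c0x}, Proposition~\ref{proposition:c0-x-y-is-banach}) with bounded left inverse $\psi$, so any $\norm{\cdot}_p$-Cauchy sequence $(T_n)$ in $\sch{p}{E_A}$ has $(\phi_\fram T_n)$ Cauchy, hence convergent to some $R\in\sch{p}{l^2(A)}$, and then $T_n=\psi\phi_\fram T_n\to\psi R\in\sch{p}{E_A}$ in $\norm{\cdot}_p$. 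I expect the main obstacle to be precisely this last paragraph, or equivalently the verification that $\psi$ maps the standard-module Schatten class back into $\sch{p}{E_A}$: this is the point where the privileged status of $l^2(A)$ is used, and once it is in place the completeness and ideal structure of $\sch{p}{E_A}$ are imported wholesale from Theorems~\ref{theorem:nonstandard-schatten-class-is-pullback-by-frame} and~\ref{theorem:lp-iff-tr-tp-in-c0x} rather than proved from scratch; everything else is bookkeeping with the frame-transform dictionary set up in the first paragraph.
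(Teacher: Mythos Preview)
Your proposal is correct and follows essentially the same approach as the paper: both use the frame transform $\phi_\fram$ to identify $\sch{p}{E_A}$ isometrically with $P_\fram\sch{p}{l^2(A)}P_\fram$, deduce properties (1)--(4) from the corresponding fiberwise Hilbert-space facts via $\chi(\tr|T|^p)=\norm{\chi_*T}_p^p$, and route property (5) through $l^2(A)$ and Proposition~\ref{proposition:continuous-hoelder-von-neumann-inequality}. The only cosmetic difference is in the completeness argument: the paper shows $P_\fram\sch{p}{l^2(A)}P_\fram$ is a closed subspace of $\sch{p}{l^2(A)}$ (using exactly your inequality $\norm{P_\fram R P_\fram}_p\leq\norm{R}_p$), whereas you phrase the same estimate as ``$\psi$ is a bounded left inverse to the isometry $\phi_\fram$''; these are equivalent formulations of the same idea.
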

\begin{proof}
  Let $\fram$ be a frame of $E_A$ and consider the projection
  $P_\fram \eqdef \theta_\fram \theta_\fram^*$ in $\lnmp{l^2(A)}$.
  Recall that the invertible $\ast$-homomorphism
  $\phi_\fram \colon T \mapsto \theta_\fram T \theta_\fram^*$ maps
  $\sch{p}{E_A}$ into $P_\fram \sch{p}{l^2(A)} P_\fram$ by
  Theorem~\ref{theorem:nonstandard-schatten-class-is-pullback-by-frame}.
  Moreover, elementary calculation shows that $\phi_\fram \mid_{\sch{p}{E_A}}$ is an isomorphism of normed spaces.

  For the properties of the norm, recall that $\chi(\tr |T|^p) = \tr |\chi_* T|^p = \norm{\chi_* T}_p^p$.
  Thus, $\norm{\tr |T|^p}_A^{1/p} = \sup_{\chi \in \gfdual{A}} \norm{\chi_* T}_p^{1/p}$.
  Therefore, by the analogous properties of $\sch{p}{H}$, we see that $\norm{T^*}_p = \norm{T}_p$, $\norm{T}_p \geq \norm{T} = \sup_{\chi \in \gfdual{A}} \norm{\chi_* T}$ and $\norm{ST}_p \leq \sup_{\chi \in \gfdual{A}} \norm{\chi_*(S)} \norm{\chi_* T}_p \leq \norm{S} \norm{T}_p$.

  For property \ref{inenum:hoelder}, recall that $\theta_\fram ST \theta_\fram^* = \theta_\fram S \theta_\fram^* \theta_\fram T \theta_\fram^* \in \sch{r}{l^2(A)}$ by Proposition~\ref{proposition:continuous-hoelder-von-neumann-inequality}, and so $ST \in \sch{r}{E_A}$ by Theorem~\ref{theorem:nonstandard-schatten-class-is-pullback-by-frame}.
  The norm inequality then follows from the H\"older--von Neumann inequality for operators on Hilbert spaces.

  Finally, to establish completeness of $\sch{p}{E_A}$ it is enough to prove that its pullback $P_\fram \sch{p}{l^2(A)} P_\fram$ is a closed subspace of $\sch{p}{l^2(A)}$. But if $P_\fram T_n P_\fram \to T$ in $\sch{p}{l^2(A)}$ then
  $$
  \| P_\fram T_n P_\fram  - P_\fram T P_\fram  \|_p =   \| P_\fram (P_\fram T_n P_\fram  - T)P_\fram \|_p \leq \| P_\fram\|^2  \| (P_\fram T_n P_\fram  - T)\|_p \to 0
  $$
  as $n \to \infty$, in virtue of the just-proved inequality \ref{inenum:ideal-norm}. Hence, $P_\fram T_n P_\fram$ converges to $P_\fram T P_\fram \in P_\fram \sch{p}{l^2(A)} P_\fram$ as desired.
\end{proof}

\subsection{The Hilbert module of Hilbert--Schmidt operators}
\label{section:hilbert-schmidt}
The Hilbert--\linebreak[4]Schmidt class $\sch{2}{E_A}$ is a somewhat special case among the Schatten classes, because the map $T \mapsto \tr T^*T$ is a positive definite quadratic form.
That is, it induces an inner product as we will now explore.

\begin{definition}\label{definition:hilbert-schmidt-inner-product}
  The pairing $\ip{\cdot, \cdot}_2 \colon \sch{2}{E_A} \times \sch{2}{E_A} \to A$ is given by
  \[
    \ip{S, T}_2 \eqdef \frac 14 \sum_{k \in \Z / 4 \Z} i^k \tr |T + i^k S|^2
  \]
\end{definition}

When viewed fiberwise, this is just the ordinary Hilbert--Schmidt inner product:

\begin{proposition} \label{proposition:hilbert-schmidt-inner-product-is-fiberwise}
  For $S, T \in \sch{2}{E_A}$ and a character $\chi$ of $A$ the pairing $\ip{S, T}_2$ satisfies $\chi \left(\ip{S, T}_2 \right) = \tr( 
  (\chi_* S)^* \chi_* T)$. 
  Moreover, if $\fram$ is a frame of $E_A$, then the series $\sum_{i=1}^\infty \ip{S \fram_i, T \fram_i}$ converges in norm to $\ip{S, T}_2$.
\end{proposition}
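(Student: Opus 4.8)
The plan is to reduce both assertions to the corresponding facts on the Hilbert-space fibers by means of the polarization identity, which is legitimate even though the inner products are $A$-valued, since the scalar field is still $\C$.

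For the first assertion I would simply apply the character $\chi$ to Definition~\ref{definition:hilbert-schmidt-inner-product}. Recall from (the proof of) Theorem~\ref{theorem:nonstandard-schatten-class-is-pullback-by-frame} that $\chi(\tr |R|^2) = \tr |\chi_* R|^2$ for every $R \in \sch{2}{E_A}$, and that $\chi_*$ is $\C$-linear by Proposition~\ref{proposition:star-homomorphism-as-inner-product-of-modules}; since $\chi$ itself is a $\C$-linear $\ast$-homomorphism, applying it to the defining sum yields
\[
\chi\left(\ip{S, T}_2\right) = \frac14 \sum_{k \in \Z / 4 \Z} i^k \tr \left| \chi_* T + i^k \chi_* S \right|^2 .
\]
The right-hand side is exactly the polarization expansion of the ordinary Hilbert--Schmidt inner product $\tr\!\left((\chi_* S)^* \chi_* T\right)$ on the Hilbert space $\chi_* E_A$; one verifies this by the elementary identities $\sum_k i^k = \sum_k i^{2k} = 0$ and $\sum_k |i^k|^2 = 4$. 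This gives the first claim.

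For the second assertion I would apply the same polarization identity, this time inside the $A$-valued inner product of $E_A$: for each index $i$,
\[
\ip{S \fram_i, T \fram_i} = \frac14 \sum_{k \in \Z / 4 \Z} i^k \ip{(T + i^k S)\fram_i,\, (T + i^k S)\fram_i},
\]
using conjugate-linearity in the first slot together with the same three identities for the powers of $i$. Summing over $i \leq N$ and interchanging the finite sum over $k$ with the sum over $i$, the $N$-th partial sum of $\sum_{i=1}^\infty \ip{S \fram_i, T \fram_i}$ becomes a fixed $\C$-linear combination of the four partial sums $\sum_{i=1}^N \ip{(T + i^k S)\fram_i, (T + i^k S)\fram_i}$. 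Since $\sch{2}{E_A}$ is a vector space (Theorem~\ref{theorem:schp-is-ideal-and-norm-has-properties}), each $T + i^k S$ lies in $\sch{2}{E_A}$, and noting $\ip{(T+i^kS)\fram_i,(T+i^kS)\fram_i} = \ip{\fram_i, |T+i^kS|^2 \fram_i}$, Theorem~\ref{theorem:nonstandard-schatten-class-is-pullback-by-frame} tells us each of these four series converges in norm, with limit $\tr |T + i^k S|^2$. Hence the partial sums of $\sum_i \ip{S \fram_i, T \fram_i}$ converge in norm to $\frac14 \sum_{k} i^k \tr |T + i^k S|^2 = \ip{S, T}_2$.

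I do not expect a genuine analytic obstacle here; the only points requiring care are matching the polarization constants to the paper's convention for the module inner product (conjugate-linear in the first argument) and the trivial observation that a fixed $\C$-linear combination of finitely many norm-convergent series is norm-convergent to the combination of the limits. As an alternative route, the first assertion can also be recovered from the second by applying the (continuous) character $\chi$ termwise to the convergent series, using that $\chi_*(\fram)$ is a frame of $\chi_* E_A$ (Proposition~\ref{proposition:phistar-of-frame-is-frame}) together with Corollary~\ref{corollary:frame-trace-is-trace} applied to $(\chi_* S)^*(\chi_* T) \in \sch{1}{\chi_* E_A}$; but the direct polarization computation above seems cleanest.
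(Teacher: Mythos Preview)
Your proposal is correct and follows essentially the same route as the paper: both arguments apply the polarization identity to reduce to the positive operators $|T+i^kS|^2$, invoke $\chi(\tr|R|^2)=\tr|\chi_*R|^2$ for the first assertion, and use Theorem~\ref{theorem:nonstandard-schatten-class-is-pullback-by-frame} on each of the four series $\sum_i\ip{\fram_i,|T+i^kS|^2\fram_i}$ for the second. Your write-up is simply more explicit (noting, e.g., that $T+i^kS\in\sch{2}{E_A}$ by Theorem~\ref{theorem:schp-is-ideal-and-norm-has-properties}), but there is no substantive difference.
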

\begin{proof}
  Since $\chi_*$ is a homomorphism, the first part follows from the fact that $\tr \chi_*( |T + i^k S|^2 ) = \chi(\tr |T + i^k S|^2)$ by the polarization identity for the fiberwise Hilbert--Schmidt inner product.
  Since
  $$\ip{S \fram_i, T \fram_i} = \ip{\fram_i, S^* T \fram_i} = \frac 14\sum_{k \in \Z /4 \Z} i^k \ip{\fram_i, |T + i^k S|^2 \fram_i},
  $$
  the second part follows from Theorem~\ref{theorem:nonstandard-schatten-class-is-pullback-by-frame}.
\end{proof}

\begin{corollary}
  The pairing $\langle \cdot,\cdot \rangle_2$ on $\sch{2}{E_A}$ is non-degenerate and sesquilinear.
\end{corollary}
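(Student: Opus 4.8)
The plan is to reduce both claims to the corresponding facts about the ordinary Hilbert--Schmidt inner product on a Hilbert space, via Proposition~\ref{proposition:hilbert-schmidt-inner-product-is-fiberwise} and the fact that the characters of $A$ separate its points.

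For sesquilinearity, fix a character $\chi$ and use $\chi(\ip{S,T}_2) = \tr((\chi_* S)^* \chi_* T)$. Since $\chi_*$ is linear and the fiberwise pairing $(\sigma,\tau) \mapsto \tr(\sigma^*\tau)$ is sesquilinear, one reads off $\chi(\ip{S, T+T'}_2) = \chi(\ip{S,T}_2) + \chi(\ip{S,T'}_2)$, $\chi(\ip{S,\lambda T}_2) = \lambda\,\chi(\ip{S,T}_2)$ and $\chi(\ip{\lambda S, T}_2) = \bar\lambda\,\chi(\ip{S,T}_2)$ for $\lambda \in \C$, and --- using that $\chi$ is a $*$-homomorphism, so $\chi(a^*) = \overline{\chi(a)}$ --- also $\chi(\ip{T,S}_2) = \overline{\chi(\ip{S,T}_2)} = \chi(\ip{S,T}_2^*)$. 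Because $\chi$ was arbitrary and characters separate points of $A$, these identities hold in $A$. If one additionally wants the $A$-linearity $\ip{S,aT}_2 = a\,\ip{S,T}_2$ for $a \in A$ (acting on $\sch{2}{E_A}$ by right multiplication on $E_A$, which commutes with adjointable, hence $A$-linear, operators), the same fiberwise computation applies, since the localization of $aT$ is $\chi(a)\,\chi_* T$.

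For non-degeneracy, note first that $(S + i^k S)^*(S + i^k S) = |1 + i^k|^2\,|S|^2$, whence a short computation gives $\ip{S,S}_2 = \tr|S|^2$; by the definition of the norm in Theorem~\ref{thm:norm-prop-Lp} this is precisely the element of $A$ of norm $\norm{S}_2^2$. Thus, if $\ip{S,T}_2 = 0$ for all $T \in \sch{2}{E_A}$, taking $T = S$ yields $\norm{S}_2 = 0$, and since $\norm{\cdot}_2$ is a norm (Theorem~\ref{thm:norm-prop-Lp}) we conclude $S = 0$. Equivalently, $\chi(\ip{S,S}_2) = \norm{\chi_* S}_2^2 = 0$ for every $\chi$ forces $\chi_* S = 0$ for all $\chi$, hence $S = 0$.

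I do not anticipate a genuine obstacle: the statement is essentially a corollary of Proposition~\ref{proposition:hilbert-schmidt-inner-product-is-fiberwise}. The only points requiring mild care are bookkeeping the sesquilinearity convention (in which argument the form is $A$-linear and where the conjugation sits) and observing that, although non-degeneracy of an $A$-valued form is a priori the condition $\ip{S,\cdot}_2 \equiv 0 \Rightarrow S = 0$ rather than definiteness of $\ip{S,S}_2$, here the two coincide because $\ip{S,S}_2 = \tr|S|^2$ vanishes only for $S = 0$.
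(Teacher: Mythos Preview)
Your proof is correct and follows exactly the approach the paper intends: the corollary is stated without proof in the paper, as an immediate consequence of Proposition~\ref{proposition:hilbert-schmidt-inner-product-is-fiberwise} together with the fact that the characters of $A$ separate points. Your argument simply makes those details explicit, reducing both sesquilinearity and non-degeneracy to the corresponding Hilbert--Schmidt facts fiberwise.
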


Next, because $A$ is commutative $E_A$ is automatically an $A$-bimodule. In fact, there is a $*$-homomorphism $\rho \colon A \to \lnmp{E_A}$ given by $\rho(a)(v) \eqdef v \cdot a$. This makes $\lnmp{E_A}$ an $A$-bimodule with $a \cdot T \cdot b= \rho(a) \circ T \circ \rho(b)$ for all $a,b \in A$ and $T \in \lnmp{E_A}$. 
\begin{proposition}
  The $A$-bimodule structure of $\lnmp{E_A}$ restricts to $\sch{p}{E_A}$ for all $1 \leq p < \infty$ and satisfies $\norm{T \cdot a}_p \leq \norm{a} \norm{T}_p$ as well as $\norm{a \cdot T}_p \leq \norm{a} \norm{T}_p$.
\end{proposition}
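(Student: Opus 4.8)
The plan is to express one-sided multiplication by $a \in A$ as composition with the adjointable operator $\rho(a) \in \lnmp{E_A}$, and then combine the ideal property of $\sch{p}{E_A}$ with the norm estimates already established in Theorem~\ref{thm:norm-prop-Lp}.

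First I would record that $\norm{\rho(a)} \leq \norm{a}$: since $\rho \colon A \to \lnmp{E_A}$ is a $\ast$-homomorphism of \cstar-algebras it is automatically contractive (alternatively, $\norm{\rho(a) v}^2 = \norm{a^* \ip{v,v} a} \leq \norm{a}^2 \norm{v}^2$). Because $\sch{p}{E_A}$ is a two-sided ideal of $\lnmp{E_A}$ by Theorem~\ref{theorem:schp-is-ideal-and-norm-has-properties}, for any $T \in \sch{p}{E_A}$ both $a \cdot T = \rho(a) \circ T$ and $T \cdot a = T \circ \rho(a)$ again lie in $\sch{p}{E_A}$; this is precisely the assertion that the $A$-bimodule structure restricts to $\sch{p}{E_A}$.

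For the norm bounds I would invoke Theorem~\ref{thm:norm-prop-Lp}. Property~\ref{inenum:ideal-norm} gives at once $\norm{a \cdot T}_p = \norm{\rho(a) T}_p \leq \norm{\rho(a)} \norm{T}_p \leq \norm{a} \norm{T}_p$. For right multiplication I would pass to adjoints: using $\rho(a)^* = \rho(a^*)$ together with the identity $\norm{S^*}_p = \norm{S}_p$ from Theorem~\ref{thm:norm-prop-Lp}, one has $\norm{T \cdot a}_p = \norm{(T \rho(a))^*}_p = \norm{\rho(a^*) T^*}_p \leq \norm{\rho(a^*)} \norm{T^*}_p \leq \norm{a} \norm{T}_p$. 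This essentially completes the proof; the only step requiring a moment's care is the passage to adjoints used to reduce the right-multiplication estimate to property~\ref{inenum:ideal-norm}, which as stated controls only left multiplication.
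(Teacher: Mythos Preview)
Your proof is correct and follows essentially the same approach as the paper: use that $\rho$ is a contractive $\ast$-homomorphism together with the ideal property (Theorem~\ref{theorem:schp-is-ideal-and-norm-has-properties}) and the norm estimate $\norm{ST}_p \leq \norm{S}\norm{T}_p$ from Theorem~\ref{thm:norm-prop-Lp}. The only minor difference is that the paper simply writes $\norm{T\rho(a)}_p \leq \norm{\rho(a)}\norm{T}_p$ without further comment, whereas you route the right-multiplication estimate through adjoints to reduce to the left-sided inequality as literally stated; both are fine.
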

\begin{proof}
Since any $*$-homomorphism between $C^*$-algebras is norm decreasing, this follows from Theorem~\ref{theorem:schp-is-ideal-and-norm-has-properties} since
$\norm{T \rho(a)}_p \leq \norm{\rho(a)} \norm{T}_p \leq \norm{a} \norm{T}_p$.
\end{proof}

All this leads to the following result for the case that $p=2$:

\begin{proposition}
  With the above right $A$-action 
  and the inner product of Definition~\ref{definition:hilbert-schmidt-inner-product}, $\sch{2}{E_A}$ becomes a Hilbert $A$-module.
\end{proposition}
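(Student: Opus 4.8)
The plan is to verify the Hilbert $A$-module axioms for $\sch{2}{E_A}$ equipped with the right action $T \cdot a = T\rho(a)$ and the inner product $\ip{\cdot,\cdot}_2$ of Definition~\ref{definition:hilbert-schmidt-inner-product}, and then to establish completeness. The algebraic axioms are almost all in hand already: sesquilinearity and non-degeneracy are the preceding Corollary, and $\sch{2}{E_A}$ is a right $A$-module by the preceding Proposition. What remains to check is positivity, i.e. $\ip{T,T}_2 \geq 0$ in $A$ with $\ip{T,T}_2 = 0$ only for $T = 0$, compatibility of the inner product with the right action, $\ip{S, T\cdot a}_2 = \ip{S,T}_2\, a$, and the Hermitian symmetry $\ip{S,T}_2 = \ip{T,S}_2^*$. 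All of these can be reduced to the fiberwise statement via Proposition~\ref{proposition:hilbert-schmidt-inner-product-is-fiberwise}: for every character $\chi$ of $A$ one has $\chi(\ip{S,T}_2) = \tr((\chi_* S)^* \chi_* T)$, which is the ordinary Hilbert--Schmidt inner product of $\chi_* S, \chi_* T \in \sch{2}{\chi_* E_A}$, and $\chi_*$ is a $\ast$-homomorphism intertwining the right action with multiplication by $\chi(a)$. Since characters separate points of the commutative $C^*$-algebra $A$, each identity of elements of $A$ follows from its scalar image under all $\chi$, and positivity follows because $\chi(\ip{T,T}_2) = \norm{\chi_* T}_2^2 \geq 0$ for all $\chi$ forces $\ip{T,T}_2 \geq 0$; moreover $\ip{T,T}_2 = 0$ gives $\norm{\chi_* T}_2 = 0$ for all $\chi$, hence $\chi_* T = 0$ for all $\chi$, hence $\tr|T|^2 = 0$ in $A$ and so $\norm{T}_2 = 0$, whence $T = 0$ by Theorem~\ref{thm:norm-prop-Lp}.

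The one genuinely analytic point is completeness: we must show $\sch{2}{E_A}$ is complete in the norm induced by $\ip{\cdot,\cdot}_2$, namely $T \mapsto \norm{\ip{T,T}_2}_A^{1/2}$. But by Proposition~\ref{proposition:hilbert-schmidt-inner-product-is-fiberwise} combined with $\tr((\chi_* T)^*\chi_* T) = \tr|\chi_* T|^2 = \chi(\tr|T|^2)$, this norm is exactly $\norm{\tr|T|^2}_A^{1/2} = \norm{T}_2$ in the notation of Theorem~\ref{thm:norm-prop-Lp}, which asserts that $\sch{2}{E_A}$ is a Banach space for precisely this norm. So completeness is already established and requires no new work; one only needs to remark that the Hilbert-module norm coming from $\ip{\cdot,\cdot}_2$ coincides with $\norm{\cdot}_2$.

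The remaining check is that $\ip{S, T\cdot a}_2 = \ip{S,T}_2\, a$ for $S, T \in \sch{2}{E_A}$ and $a \in A$. Applying a character $\chi$ and using Proposition~\ref{proposition:hilbert-schmidt-inner-product-is-fiberwise}, the left side becomes $\tr((\chi_* S)^* \chi_*(T\rho(a))) = \tr((\chi_* S)^* (\chi_* T)\chi(a)) = \chi(a)\tr((\chi_* S)^*\chi_* T)$, which is $\chi(a)\chi(\ip{S,T}_2) = \chi(\ip{S,T}_2\, a)$ by commutativity of $A$; separation of points then gives the identity in $A$. (Here I use that $\chi_*(T\rho(a)) = (\chi_* T)\,\chi_*(\rho(a))$ and that $\chi_*\rho(a)$ acts on the fiber $\chi_* E_A$ as the scalar $\chi(a)$, since $\rho(a)$ is right multiplication by $a$ and localization at $\chi$ sends the module action of $a$ to multiplication by $\chi(a)$.) The main obstacle is essentially bookkeeping: one must be careful that the right $A$-action used in the module structure is genuinely $T\cdot a = T\rho(a)$ and that this is the action under which $\ip{\cdot,\cdot}_2$ is $A$-linear in the second variable — the polarization formula of Definition~\ref{definition:hilbert-schmidt-inner-product} makes $A$-linearity not completely manifest, so the reduction to fibers via Proposition~\ref{proposition:hilbert-schmidt-inner-product-is-fiberwise} is the cleanest route and the only step needing care.
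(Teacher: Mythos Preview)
Your proposal is correct and follows essentially the same route as the paper: reduce the Hilbert-module axioms (in particular $A$-sesquilinearity) to the fiberwise Hilbert--Schmidt inner product via Proposition~\ref{proposition:hilbert-schmidt-inner-product-is-fiberwise} and the identity $\chi_*(T\rho(a)) = \chi(a)\,\chi_* T$, and then observe that completeness is already contained in the earlier Banach-space result for $\sch{p}{E_A}$. Your write-up is simply more explicit about positivity, Hermitian symmetry, and the identification of the induced norm with $\norm{\cdot}_2$, and you cite Theorem~\ref{thm:norm-prop-Lp} for completeness (which is indeed where it is proved).
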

\begin{proof}
  Note that $\chi_* (T \circ \rho(a)) = \chi(a) \chi_* T$ for all $a \in A$ and characters $\chi$ of $A$.
  Thus, with Proposition~\ref{proposition:hilbert-schmidt-inner-product-is-fiberwise}, the inner product is $A$-sesquilinear.
  All that is left to show, therefore, is that $\sch{2}{E_A}$ is complete.
  That, however, was proven already in Theorem~\ref{theorem:schp-is-ideal-and-norm-has-properties}.
\end{proof}

\begin{proposition}
  Let $H$ be a separable Hilbert space.
  Then, $\sch{2}{H \otimes_\C A}$ is isomorphic, as a Hilbert $A$-module, to $\sch2{H} \otimes_\C A$.
\end{proposition}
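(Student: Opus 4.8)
The plan is to realize both sides as the Hilbert $A$-module $C_0(X, \sch{2}{H})$, where $A \simeq C_0(X)$ and $\sch{2}{H}$ is regarded as a separable Hilbert space in its own right (separability being inherited from $H$). First I would invoke Theorem~\ref{theorem:lp-iff-tr-tp-in-c0x}: under the isomorphism $\lnmp{H \otimes_\C A} \simeq \cbstr(X, B(H))$, an operator $T$ lies in $\sch{2}{H \otimes_\C A}$ if and only if the family $x \mapsto T(x) := \chi_{x\,*} T$ is norm-continuous into $\sch{2}{H}$ and vanishes at infinity there; that is, $\sch{2}{H \otimes_\C A}$ and $C_0(X, \sch{2}{H})$ have the same underlying set, the latter being a Hilbert $A$-module by Proposition~\ref{proposition:definition-of-c0-x-h} with the pointwise inner product $\ip{f, g}(x) = \ip{f(x), g(x)}_{\sch{2}{H}}$.

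Next I would check that this identification is one of Hilbert $A$-modules, not merely of Banach spaces. For the $A$-action this is the identity $\chi_{x\,*}(T \circ \rho(a)) = a(x)\,\chi_{x\,*} T$ noted above, which turns the right $A$-action on $\sch{2}{H \otimes_\C A}$ into pointwise multiplication. For the inner products it is exactly Proposition~\ref{proposition:hilbert-schmidt-inner-product-is-fiberwise}: $\chi_x(\ip{S, T}_2) = \tr\big((\chi_{x\,*} S)^* \chi_{x\,*} T\big) = \ip{S(x), T(x)}_{\sch{2}{H}}$, so the Hilbert--Schmidt inner product of Definition~\ref{definition:hilbert-schmidt-inner-product} is precisely the $C_0(X)$-valued inner product of $C_0(X, \sch{2}{H})$. (Compatibility of the norms, $\norm{T}_2 = \sup_x \norm{T(x)}_{\sch{2}{H}}$, is then automatic, and is also recorded in Theorem~\ref{thm:norm-prop-Lp}.) Hence $\sch{2}{H \otimes_\C A} \simeq C_0(X, \sch{2}{H})$ as Hilbert $A$-modules.

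Finally I would apply Proposition~\ref{proposition:c0-x-otimes-h-is-c0-x-h} with $\sch{2}{H}$ in place of $H$: it furnishes a unitary $C_0(X, \sch{2}{H}) \simeq \sch{2}{H} \otimes_\C C_0(X) = \sch{2}{H} \otimes_\C A$ of Hilbert $A$-modules. Composing the two identifications gives the statement.

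I do not anticipate a real obstacle: the analytic heart of the matter — that $\tr |T|^2 \in A$ already forces continuity of the fiberwise Hilbert--Schmidt family — is contained in Theorem~\ref{theorem:lp-iff-tr-tp-in-c0x}, and the agreement of inner products is Proposition~\ref{proposition:hilbert-schmidt-inner-product-is-fiberwise}. The only thing requiring care is the bookkeeping: keeping the three incarnations of the module apart (operators on $H \otimes_\C A$; $\sch{2}{H}$-valued functions on $X$; the standard-type module $\sch{2}{H} \otimes_\C A$) and verifying that each identification respects the $A$-valued inner product and the $A$-action, and not just the Banach-space structure.
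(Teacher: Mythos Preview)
Your proposal is correct and follows essentially the same route as the paper: identify $\sch{2}{H \otimes_\C A}$ with $C_0(X,\sch{2}{H})$ via Theorem~\ref{theorem:lp-iff-tr-tp-in-c0x}, observe that the Hilbert $A$-module structures match, and then invoke Proposition~\ref{proposition:c0-x-otimes-h-is-c0-x-h} with the separable Hilbert space $\sch{2}{H}$ in place of $H$. If anything, you are slightly more explicit than the paper in justifying the agreement of inner products (via Proposition~\ref{proposition:hilbert-schmidt-inner-product-is-fiberwise}) and $A$-actions.
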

\begin{proof}
  Under the isomorphism $H \otimes_\C A \simeq C_0(X,H)$, $\sch{2}{H \otimes_\C A}$ is mapped isometrically onto $C_0(X, \sch{2}{H})$ by Theorem~\ref{theorem:lp-iff-tr-tp-in-c0x}.
  Under this identification, the Hilbert $C^*$-module structure of $\sch{2}{H \otimes_\C A}$ coincides with the canonical Hilbert $C^*$-module structure on $C_0(X, \sch{2}{H})$ induced by the inner product on the Hilbert space $\sch{2}{H}$.
  Thus, we have $\sch{2}{H \otimes_\C A} \simeq C_0(X, \sch{2}{H})$ as Hilbert $C^*$-modules. 
  Now, invoke once again the isomorphism $\sch{2}{H} \otimes_\C A \simeq C_0(X, \sch{2}{H})$, this time for the Hilbert space $\sch{2}{H}$, to complete the proof.
\end{proof}

\subsection{The trace class and the trace}

We will refer to the ideal $\sch1{E_A}\subset  \lnmp{E_A}$ consisting of those operators $T $ for which $\tr |T|$ is given by an element of $A$, as the \emph{trace class}.
In the case of Schatten classes of Hilbert spaces, {\em i.e.} $A = \C$ and $E_A = H$, it is customary to identify the trace class as the ideal generated by squares of elements of the Hilbert--Schmidt class, in order to relate the Hilbert--Schmidt inner product to a linear function, the \emph{trace}, on $\sch{1}{H}$.
The situation here is completely analogous:

\begin{proposition}
  Let $\sch{2}{E_A} \sch{2}{E_A}$ denote the set $\{ RS \mid R \in \sch{2}{E_A}, S \in \sch{2}{E_A} \} \subset \lnmp{E_A}$.
  Then, $\sch{2}{E_A} \sch{2}{E_A} = \sch{1}{E_A}$.
\end{proposition}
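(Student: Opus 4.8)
The inclusion $\sch2{E_A}\sch2{E_A}\subseteq\sch1{E_A}$ is immediate: if $R,S\in\sch2{E_A}$ then $RS\in\sch1{E_A}$ by the H\"older--von Neumann inequality in Theorem~\ref{thm:norm-prop-Lp}(\ref{inenum:hoelder}), taken with $p=q=2$ and $r=1$. The content of the proposition is the reverse inclusion --- every $T\in\sch1{E_A}$ should be a product of two Hilbert--Schmidt operators. On a Hilbert space one would use the polar decomposition $T=U|T|$ and set $R=U|T|^{1/2}$, $S=|T|^{1/2}$; the obstacle here is that a general adjointable operator on a Hilbert $C^*$-module --- indeed a compact one, since $\sch1{E_A}\subseteq\cptmp{E_A}$ by Theorem~\ref{theorem:schp-is-ideal-and-norm-has-properties} --- need not have a polar decomposition. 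The plan is to dodge this by symmetrising.

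I would work in $E_A\oplus E_A$ and set $\hat T\eqdef\begin{pmatrix}0&T^*\\T&0\end{pmatrix}\in\lnmp{E_A\oplus E_A}$, which is self-adjoint. A block computation gives $\hat T^{\,2}=\begin{pmatrix}|T|^2&0\\0&|T^*|^2\end{pmatrix}$ and hence $|\hat T|=\begin{pmatrix}|T|&0\\0&|T^*|\end{pmatrix}$; since $\chi\mapsto\tr|\chi_*\hat T|=2\,\tr|\chi_*T|$ is the element $2\,\tr|T|\in A$, Definition~\ref{definition:minimal-definition-of-schatten-class} gives $\hat T\in\sch1{E_A\oplus E_A}$, and therefore $|\hat T|\in\sch1{E_A\oplus E_A}$ by Proposition~\ref{proposition:t-in-lp-iff-tabs-in-lp-iff-tp-in-l1}. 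The key move is then to apply the continuous functional calculus in the $C^*$-algebra $\lnmp{E_A\oplus E_A}$ to the continuous real functions $g(t)\eqdef\operatorname{sgn}(t)\,|t|^{1/2}$ and $h(t)\eqdef|t|^{1/2}$ on $\R$: here $g$ is odd, $g^2=h^2=|\cdot|$, and $g\cdot h=\operatorname{id}_{\R}$. Putting $R_0\eqdef g(\hat T)$ and $S_0\eqdef h(\hat T)=|\hat T|^{1/2}$, one gets $R_0^*R_0=S_0^*S_0=|\hat T|\in\sch1{E_A\oplus E_A}$, so $R_0,S_0\in\sch2{E_A\oplus E_A}$, while multiplicativity of the calculus yields $R_0S_0=\hat T$.

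Because $g$ is odd and $V\hat TV=-\hat T$ for the self-adjoint unitary $V\eqdef\begin{pmatrix}\operatorname{id}&0\\0&-\operatorname{id}\end{pmatrix}$, the element $R_0$ anticommutes with $V$, hence is block--off-diagonal: $R_0=\begin{pmatrix}0&B\\C&0\end{pmatrix}$. As $S_0=|\hat T|^{1/2}=\begin{pmatrix}|T|^{1/2}&0\\0&|T^*|^{1/2}\end{pmatrix}$ is block-diagonal, comparing the $(2,1)$-entries of $R_0S_0=\hat T$ gives $C\,|T|^{1/2}=T$. Finally $|T|^{1/2}\in\sch2{E_A}$ since $\tr(|T|^{1/2})^2=\tr|T|\in A$, and with $\iota_1,\iota_2\colon E_A\to E_A\oplus E_A$ the canonical inclusions one has $C=\iota_2^*R_0\iota_1$ and $C^*C=\iota_1^*R_0^*(\iota_2\iota_2^*)R_0\iota_1\le\iota_1^*R_0^*R_0\iota_1=\iota_1^*|\hat T|\iota_1=|T|=(|T|^{1/2})^*|T|^{1/2}$, so $C\in\sch2{E_A}$ by Corollary~\ref{corolllary:s-in-Lp-if-sp-leq-tp-in-Lp}. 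Thus $T=C\,|T|^{1/2}\in\sch2{E_A}\sch2{E_A}$. The only real obstacle is the absence of polar decomposition; it is resolved by the dilation $\hat T$ together with the observation that the signed square root $g(t)=\operatorname{sgn}(t)|t|^{1/2}$, unlike $\operatorname{sgn}$ itself, is continuous yet still satisfies $g\cdot h=\operatorname{id}$ --- so $g(\hat T)$ provides, unconditionally, the ``partial isometry composed with $|\hat T|^{1/2}$'' factor that a polar decomposition would give. The passage from $R_0\in\sch2{E_A\oplus E_A}$ to its corner $C\in\sch2{E_A}$ is the only other point to check, and it falls out of Corollary~\ref{corolllary:s-in-Lp-if-sp-leq-tp-in-Lp}.
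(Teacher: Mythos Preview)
Your proof is correct, but it takes a considerably longer route than the paper's. The paper simply invokes the \emph{weak polar decomposition}: for any $T\in\lnmp{E_A}$ one may write $T=S\,|T|^{1/2}$ with $|S|=|T|^{1/2}$ (this is a standard fact for adjointable operators, used without comment elsewhere in the paper, e.g.\ in Lemmas~\ref{lemma:trivial-uniformly-finite-rank-so-schatten} and~\ref{lemma:lph-uniform-completion-of-matc}). Then Proposition~\ref{proposition:t-in-lp-iff-tabs-in-lp-iff-tp-in-l1} immediately gives $S,\,|T|^{1/2}\in\sch{2}{E_A}$.

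Your dilation argument in fact \emph{reconstructs} this same factor $S$: since $R_0$ is block--off-diagonal, $\iota_2\iota_2^*R_0\iota_1=R_0\iota_1$, so your inequality $C^*C\le|T|$ is actually an equality $C^*C=|T|$, and your $C$ is precisely the $S$ of the weak polar decomposition. What you gain is a self-contained proof that avoids citing the weak polar decomposition as a black box (and a nice illustration that the signed square root $g(t)=\operatorname{sgn}(t)|t|^{1/2}$ is the right continuous substitute for $\operatorname{sgn}$); what the paper gains is brevity. Your remark that ``a general adjointable operator \dots\ need not have a polar decomposition'' is true for the \emph{strong} form $T=U|T|$ with $U$ a partial isometry, but the weak form with exponent $\alpha<1$ always exists in $\lnmp{E_A}$, which is why the paper can proceed so quickly.
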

\begin{proof}
  The inclusion $\sch{2}{E_A} \sch{2}{E_A} \subset \sch{1}{E_A}$ is a direct consequence of the H\"older--von Neumann inequality in Theorem~\ref{thm:norm-prop-Lp}(\ref{inenum:hoelder})
  Conversely, let $T \in \sch{1}{E_A}$.
  Then, $T = S |T|^{\frac12}$ in the usual weak polar decomposition, with $|S| = |T|^{\frac12}$.
  By Proposition~\ref{proposition:t-in-lp-iff-tabs-in-lp-iff-tp-in-l1}, $S$ and $|T|^{\frac12}$ lie in $\sch{2}{E_A}$.
\end{proof}

This furnishes us with a way to turn the bilinear map $\ip{\cdot, \cdot}_2$  on $\sch{2}{E_A}$ into a linear map $\tr$ on $\sch1{E_A}$ called the \emph{trace}:

\begin{definition}
  The \emph{trace} on $\sch{1}{E_A}$ is the map $\tr \colon T \mapsto \langle {S^*, |T|^{\frac12}}\rangle _2$, where $T = S |T|^{\frac12}$ is the weak polar decomposition.
\end{definition}

\begin{proposition}
  The trace is well-defined.
  Moreover, let $\fram$ be a frame.
  Then, the series $\sum_i \ip{\fram_i, T \fram_i}$ converges in norm to $\tr T$.
\end{proposition}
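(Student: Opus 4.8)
The plan is to deduce both assertions from the frame expansion of the Hilbert--Schmidt pairing in Proposition~\ref{proposition:hilbert-schmidt-inner-product-is-fiberwise}. Fix $T \in \sch{1}{E_A}$ and a weak polar decomposition $T = S |T|^{\frac12}$ with $|S| = |T|^{\frac12}$. First I would record that all the operators involved lie in the Hilbert--Schmidt class: since $\bigl| |T|^{\frac12} \bigr|^2 = |T| \in \sch{1}{E_A}$ and $|S|^2 = |T| \in \sch{1}{E_A}$, Proposition~\ref{proposition:t-in-lp-iff-tabs-in-lp-iff-tp-in-l1} gives $|T|^{\frac12}, S \in \sch{2}{E_A}$, and by Theorem~\ref{thm:norm-prop-Lp} also $S^* \in \sch{2}{E_A}$. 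Hence $\ip{S^*, |T|^{\frac12}}_2$ is a well-defined element of $A$ for this particular choice of decomposition, and $\tr T$ at least makes sense once such a decomposition is fixed.

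Next I would expand this pairing along an arbitrary frame $\fram$ of $E_A$. By Proposition~\ref{proposition:hilbert-schmidt-inner-product-is-fiberwise}, applied to the pair $(S^*, |T|^{\frac12})$ of Hilbert--Schmidt operators, the series $\sum_{i=1}^\infty \ip{S^* \fram_i, |T|^{\frac12} \fram_i}$ converges in norm to $\ip{S^*, |T|^{\frac12}}_2$. Using the defining property of the adjoint, $\ip{S^* \fram_i, |T|^{\frac12} \fram_i} = \ip{\fram_i, S |T|^{\frac12} \fram_i} = \ip{\fram_i, T \fram_i}$, so that
\[
  \sum_{i=1}^\infty \ip{\fram_i, T \fram_i} = \ip{S^*, |T|^{\frac12}}_2 = \tr T,
\]
with the series converging in norm. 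This is exactly the second claim. It also yields the first: the left-hand side depends only on $T$ and $\fram$, not on the chosen factorization $T = S|T|^{\frac12}$, so $\ip{S^*, |T|^{\frac12}}_2$ is independent of that choice and $\tr$ is well-defined; \emph{a fortiori}, since $\ip{\cdot, \cdot}_2$ is intrinsic to $\sch{2}{E_A}$, the value of the series is also independent of the frame $\fram$.

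I expect the only genuinely delicate point to be the existence of a weak polar decomposition $T = S |T|^{\frac12}$ with $S$ adjointable and $|S| = |T|^{\frac12}$ --- the familiar subtlety being that the partial isometry in the ordinary polar decomposition need not be adjointable on a Hilbert $C^*$-module. This is, however, precisely the decomposition already invoked in the proposition identifying $\sch{2}{E_A}\sch{2}{E_A}$ with $\sch{1}{E_A}$, so I would simply appeal to that; everything else reduces to the manipulation of inner products above together with the norm-convergence statement of Proposition~\ref{proposition:hilbert-schmidt-inner-product-is-fiberwise}, and no further estimates are needed.
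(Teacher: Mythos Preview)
Your argument is correct and follows essentially the same route as the paper: expand the Hilbert--Schmidt pairing along a frame via Proposition~\ref{proposition:hilbert-schmidt-inner-product-is-fiberwise}, rewrite $\ip{S^*\fram_i, |T|^{1/2}\fram_i}$ as $\ip{\fram_i, T\fram_i}$, and observe that one side is frame-independent while the other is decomposition-independent. The only cosmetic difference is that the paper phrases the computation for an arbitrary factorization $T = RS$ with $R,S \in \sch{2}{E_A}$ rather than just the polar one, but this is not a genuinely different idea.
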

\begin{proof}
  Assume $T = RS$ with $R, S \in \sch2{E_A}$ and let $\fram$ be a frame.
  Then $\ip{R^*, S}_2 = \sum_{i} \ip{R^* \fram_i, S \fram_i}$, which  converges in norm by Proposition~\ref{proposition:hilbert-schmidt-inner-product-is-fiberwise}.
  As $\ip{R^* \fram_i, S \fram_i} = \ip{\fram_i, T \fram_i}$, we have two expressions for $\tr T$: one independent of the decomposition $T = RS$ and one independent of the choice $\fram$ of frame.
  The proposition follows.
\end{proof}

\begin{corollary} \label{corollary:trace-is-pointwise}
  Let $\chi$ be a character of $A$ and let $T \in \sch{1}{E_A}$.
  Then, $\tr \chi_* T = \chi(\tr T)$.
\end{corollary}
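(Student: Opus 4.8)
The plan is to express both sides as norm-convergent series over a frame and match them term by term. Fix a frame $\fram$ of $E_A$, which exists by Proposition~\ref{proposition:existence-of-frames}. By the preceding Proposition, $\tr T = \sum_{i=1}^\infty \ip{\fram_i, T \fram_i}$, the series converging in the norm of $A$. Since $\chi$ is a character of $A$ it is a $\ast$-homomorphism, hence norm-decreasing and in particular norm-continuous, so we may apply it to the series to obtain $\chi(\tr T) = \sum_{i=1}^\infty \chi\!\left(\ip{\fram_i, T \fram_i}\right)$, again convergent (now in $\C$).

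Next I would identify the right-hand side with a frame expansion on the fiber $\chi_* E_A$. A (nonzero) character is surjective, so by Proposition~\ref{proposition:phistar-of-frame-is-frame} the sequence $\chi_*(\fram) = \{\chi_*(\fram_i)\}_i$ is a frame of the separable Hilbert space $\chi_* E_A$. Moreover, by Proposition~\ref{proposition:star-homomorphism-as-inner-product-of-modules} we have $\chi\!\left(\ip{\fram_i, T \fram_i}\right) = \ip{\chi_* \fram_i, \chi_*(T \fram_i)} = \ip{\chi_* \fram_i, (\chi_* T) \chi_* \fram_i}$, using that $\chi_*$ intertwines $T$ with $\chi_* T = T \otimes 1$. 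Hence $\chi(\tr T) = \sum_{i=1}^\infty \ip{\chi_* \fram_i, (\chi_* T)\, \chi_* \fram_i}$.

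It remains to recognize this series as $\tr \chi_* T$. Since $T \in \sch{1}{E_A}$, Definition~\ref{definition:minimal-definition-of-schatten-class} gives that $\chi \mapsto \tr |\chi_* T|$ lies in $A$ and in particular is finite at $\chi$, so $\chi_* T \in \sch{1}{\chi_* E_A}$. Then Corollary~\ref{corollary:frame-trace-is-trace}, applied to the frame $\chi_*(\fram)$ of $\chi_* E_A$, yields $\tr \chi_* T = \sum_{i=1}^\infty \ip{\chi_* \fram_i, (\chi_* T)\, \chi_* \fram_i}$. Combining the three displays gives $\chi(\tr T) = \tr \chi_* T$, as claimed.

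The argument is essentially bookkeeping; the only point that requires a moment's care is checking that $\chi_* T$ is genuinely trace-class on the fiber (so that Corollary~\ref{corollary:frame-trace-is-trace} applies), which is immediate from the defining membership $T \in \sch{1}{E_A}$, and that $\chi_*$ simultaneously transports the frame and commutes with $T$ in the inner products, which is exactly the content of Propositions~\ref{proposition:star-homomorphism-as-inner-product-of-modules} and~\ref{proposition:phistar-of-frame-is-frame}.
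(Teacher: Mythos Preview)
Your proof is correct and follows essentially the same route as the paper's: both compute $\chi(\tr T)$ via the frame expansion $\sum_i \chi(\ip{\fram_i, T\fram_i}) = \sum_i \ip{\chi_*\fram_i, (\chi_*T)\chi_*\fram_i}$ and then invoke Corollary~\ref{corollary:frame-trace-is-trace} for the frame $\chi_*(\fram)$ of the fiber. You have simply spelled out more of the bookkeeping (continuity of $\chi$, surjectivity for Proposition~\ref{proposition:phistar-of-frame-is-frame}, and the check that $\chi_*T \in \sch{1}{\chi_*E_A}$) than the paper's terse two-line argument.
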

\begin{proof}
  Note that $\chi(\ip{\fram_i, T \fram_i}) = \ip{\chi_* \fram_i, \chi_* T \chi_* \fram_i}$. Since $\chi_* \fram_i$ is a frame of $\chi_* E_A$ the result then follows from Corollary~\ref{corollary:frame-trace-is-trace}.
\end{proof}

\begin{corollary}
  For $T \in \sch{1}{E_A}$, $|\tr T| \leq \tr |T|$, as elements of $A$.
  In particular, if $A$ is unital $| \tr T | \leq \tr |T| \leq \norm{T}_1 1_A$.
\end{corollary}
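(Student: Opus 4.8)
The plan is to reduce everything to the classical Hilbert space inequality $|\tr S| \le \tr |S|$ for a trace-class operator $S$ on a Hilbert space, and then transport it through Gelfand duality $A \simeq C_0(\gfdual{A})$.

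First I would note that for $T \in \sch{1}{E_A}$ and any character $\chi$ of $A$, the localization $\chi_* T$ is genuinely a trace-class operator on the Hilbert space $\chi_* E_A$: indeed $\tr |\chi_* T|$ is finite by Definition~\ref{definition:minimal-definition-of-schatten-class}, since $\tr |T| \in A$. Moreover $\tr \chi_* T = \chi(\tr T)$ by Corollary~\ref{corollary:trace-is-pointwise}, and $\tr |\chi_* T| = \chi(\tr |T|)$ by the very construction of the element $\tr |T| \in A$ in Theorem~\ref{theorem:nonstandard-schatten-class-is-pullback-by-frame}. Applying the classical inequality $|\tr \chi_* T| \le \tr |\chi_* T|$ on the Hilbert space $\chi_* E_A$ then gives $|\chi(\tr T)| \le \chi(\tr |T|)$ for every character $\chi$ of $A$.

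Next I would translate this pointwise statement back into $A$ via Gelfand duality: under $A \simeq C_0(\gfdual{A})$ the element $|\tr T|$ is precisely the function $\chi \mapsto |\chi(\tr T)|$, and $\tr |T|$ is the function $\chi \mapsto \chi(\tr |T|)$, so the inequality just established is exactly $|\tr T| \le \tr |T|$ as elements of $A$. For the final assertion, assume $A$ is unital. Then $\tr |T|$ is a positive element of the unital $C^*$-algebra $A$, hence $\tr |T| \le \norm{\tr |T|}_A\, 1_A$; and $\norm{\tr |T|}_A = \norm{T}_1$ by the definition of the Schatten norm in Theorem~\ref{thm:norm-prop-Lp}, yielding $|\tr T| \le \tr |T| \le \norm{T}_1\, 1_A$.

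I expect no genuine obstacle here. The only points requiring care are checking that $\chi_* T$ is trace class fiberwise (immediate from Definition~\ref{definition:minimal-definition-of-schatten-class}) and being explicit that "$|\tr T|$" denotes the absolute value in the commutative $C^*$-algebra $A$, i.e. the pointwise modulus of the Gelfand transform of $\tr T$; once these are spelled out the argument is a routine pointwise comparison of continuous functions.
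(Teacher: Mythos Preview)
Your proposal is correct and follows essentially the same route as the paper: reduce to the fiberwise Hilbert-space inequality $|\tr \chi_* T|\le \tr|\chi_*T|$ via Corollary~\ref{corollary:trace-is-pointwise} and the definition of $\tr|T|$, then use Gelfand duality (equivalently, that characters separate $A$) to conclude $|\tr T|\le\tr|T|$ in $A$, and finish the unital case with $a\le\norm{a}1_A$ for positive $a$. The only cosmetic difference is that you spell out a bit more explicitly why $\chi_*T$ is trace class and how the modulus transports under the Gelfand transform.
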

\begin{proof}
  For all $\chi \in \gfdual{A}$ we have $\chi ( | \tr T|) = | \tr \chi_* T | \leq 
  \tr |\chi_* T| = \chi( \tr |T|)$ by the inequality $|\tr S| \leq \tr |S|$ on $\sch{1}{H}$ for Hilbert spaces $H$.
  As the characters separate $A$, the first statement follows immediately.
  The last statement follows from the inequality $ a \leq \norm{a} 1_A$ for positive elements of any unital \cstar-algebra.
\end{proof}

Now, we can finally show that the trace is cyclic.
The standard approach is as follows:

\begin{proposition} \label{proposition:trace-is-cyclic}
  If $S, T \in \lnmp{E_A}$ are such that $ST \in \sch{1}{E_A}$ and $TS \in \sch{1}{E_A}$, then $\tr ST = \tr TS$.
\end{proposition}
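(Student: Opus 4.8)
The plan is to reduce the identity to the classical cyclicity of the trace on a Hilbert space by localizing at the characters of $A$. Fix $\chi \in \gfdual{A}$. By Proposition~\ref{proposition:star-homomorphism-as-inner-product-of-modules}, $\chi$ induces a $\ast$-homomorphism $\chi_* \colon \lnmp{E_A} \to B(\chi_* E_A)$ to the bounded operators on the Hilbert space $\chi_* E_A$, so that $\chi_*(ST) = (\chi_* S)(\chi_* T)$ and $\chi_*(TS) = (\chi_* T)(\chi_* S)$. Since $ST, TS \in \sch{1}{E_A}$, Definition~\ref{definition:minimal-definition-of-schatten-class} shows in particular that $\chi_*(ST)$ and $\chi_*(TS)$ are trace-class operators on $\chi_* E_A$, and Corollary~\ref{corollary:trace-is-pointwise} identifies their traces with $\chi(\tr ST)$ and $\chi(\tr TS)$, respectively.

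It then suffices to invoke the Hilbert-space statement: for bounded operators $A_0, B_0$ on a Hilbert space $K$ with $A_0 B_0$ and $B_0 A_0$ both trace-class, one has $\tr(A_0 B_0) = \tr(B_0 A_0)$. Granting this and applying it with $A_0 = \chi_* S$, $B_0 = \chi_* T$, $K = \chi_* E_A$, we obtain $\chi(\tr ST) = \tr((\chi_* S)(\chi_* T)) = \tr((\chi_* T)(\chi_* S)) = \chi(\tr TS)$. As this holds for every character $\chi$ of $A$ and the characters separate the points of $A \simeq C_0(\gfdual{A})$, we conclude that $\tr ST = \tr TS$.

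The crux is thus the Hilbert-space input just used. Unlike the easy case of a trace-class operator against a bounded one, it is not purely formal, since the double series $\sum_{i,j} \ip{e_i, A_0 e_j}\ip{e_j, B_0 e_i}$ that computes the two traces from matrix coefficients need not converge absolutely (and, for the same reason, a direct frame computation of $\tr ST = \sum_i \ip{\fram_i, ST \fram_i}$ versus $\tr TS$ runs into exactly this obstruction, even though $A$ is commutative). I would deduce the lemma from Lidskii's theorem, $\tr C = \sum_n \lambda_n(C)$ for $C \in \sch{1}{K}$ — the sum being over the nonzero eigenvalues with algebraic multiplicity, absolutely convergent by Weyl's inequality $\sum_n |\lambda_n(C)| \le \norm{C}_1$ — together with the classical fact that $A_0 B_0$ and $B_0 A_0$ share the same nonzero eigenvalues with identical algebraic multiplicities; equivalently, one may differentiate the Fredholm determinant identity $\det(1 + z A_0 B_0) = \det(1 + z B_0 A_0)$ at $z = 0$. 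Either route is standard Hilbert-space theory, and once it is in place no Hilbert-module–specific argument remains.
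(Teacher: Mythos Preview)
Your proof is correct and follows essentially the same route as the paper: localize at characters via Corollary~\ref{corollary:trace-is-pointwise}, use that $\chi_*$ is multiplicative, invoke the Hilbert-space cyclicity result (the paper cites \cite[Corollary 3.8]{MR2154153} for this), and conclude because the characters separate $A$. Your additional discussion of why the Hilbert-space lemma is nontrivial and how one might prove it (Lidskii, or the Fredholm determinant) is accurate but goes beyond what the paper records.
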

\begin{proof}
  Consider the value of a character $\chi \in \widehat{A}$ on the difference $\tr S T - \tr T S \in A$ and use Corollary \ref{corollary:trace-is-pointwise} above:
  \begin{align*}
    \chi(\tr S T - \tr T S) &=\chi(\tr (ST)) - \chi(\tr (TS))\\
    &=  \tr (\chi_* (ST)) - \tr (\chi_*(TS)) \\
    &=  \tr (\chi_* (S)\chi_*(T)) - \tr (\chi_*(T)\chi_*(S))
  \end{align*}
  We may now use the tracial property of the trace on $\sch1{\chi_*E_A}$ ({\em cf.} \cite[Corollary 3.8]{MR2154153}) and the fact that $\chi$ separates $A$ to conclude the proof. 
\end{proof}



\section{Applications of Schatten classes}
\subsection{The Fredholm determinant}
\label{section:fredholm-determinant}
As a first application of the above theory of Schatten classes for Hilbert modules over unital abelian $C^*$-algebras, we consider the Fredholm determinant. Let $A$ be a 
commutative \cstar-algebra and let $E_A$ be a countably generated $A$-module.

\begin{definition}
  Let $G(E_A) \subset \lnmp{E_A}$ be the set of bounded, invertible endomorphisms of $E_A$ of the form $\id_{E_A} + T$, where $T \in \sch{1}{E_A}$.
\end{definition}

Note that $G(E_A)$ is a group under the multiplication of $\lnmp{E_A}$ because we have $(\id + T)^{-1} = \id  - T(\id + T)^{-1}$ and $\sch{1}{E_A}$ is an ideal.
We will define the Fredholm determinant, first on the standard module, and then by pullback by a frame transform on general countably generated Hilbert $A$-modules.

The following definition of the Fredholm determinant on a Hilbert space $H$ is well-known.
See e.g. \cite[Chapter 3]{MR2154153} for a brief discussion in the context of Lidskii's theorem.

\begin{definition} \label{definition:fredholm-determinant-on-hilbert-space}
  Let $T \in \sch{1}{H_\C}$.
Then, the \emph{Fredholm determinant} of $\id + T$ is
  \[
    \det(\id + T) \eqdef \sum_{k=0}^\infty \tr \exterior^k T.
  \]
\end{definition}

Recall that the series converges by the estimate $\| \exterior^k T \|_1 \leq \|T\|_1^k / k!$.

\begin{remark} \label{remark:fredholm-determinant-is-continuous}
  We have $| \det(\id +T_1) - \det(\id + T_2) | \leq \norm{T_1 - T_2}_1 \exp(\norm{T_1}_1 + \norm{T_2}_1 + 1)$, cf. \cite[Theorem 3.4]{MR2154153}. Thus, $T \mapsto \det(\id + T)$ is a continuous function on $\sch{1}{H}$.
\end{remark}

The Fredholm determinant of $\id + T$ is invariant under conjugation of $T$ by partial isometries that commute with $T$:

\begin{lemma}  \label{lemma:fredholm-determinant-invariant-under-commuting-partial-isometries}
  If $u \colon H \to K$ is a partial isometry of Hilbert spaces, and $T \in \sch{1}{H}$ is such that $u^* u T = T u^* u = T$, then $\id + u T u^* \in G(K_\C)$ and in fact $ \det(\id_K + u T u^*) = \det(\id_H + T)$.
\end{lemma}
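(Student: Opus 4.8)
The plan is to reduce everything to the ``supports'' of $T$ and $u T u^*$, where $u$ becomes a genuine unitary, and then to invoke unitary invariance of the Fredholm determinant. First I would set $p \eqdef u^* u \in B(H)$ and $q \eqdef u u^* \in B(K)$, the initial and final projections of the partial isometry. The hypothesis $u^*u T = T u^*u = T$ says precisely that $p T = T p = T$, so that $T$ commutes with $p$ and is supported in $pH$; conjugating, $q (uTu^*) = (uTu^*) q = uTu^*$, so $uTu^*$ is supported in $qK$. The restriction $\tilde u \eqdef u|_{pH} \colon pH \to qK$ is then a unitary, and by construction $\tilde u\, (T|_{pH})\, \tilde u^* = (uTu^*)|_{qK}$.

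Next I would observe that the Fredholm determinant only sees the support. Writing $T = (T|_{pH}) \oplus 0$ with respect to $H = pH \oplus (1-p)H$, the standard formula for exterior powers of a direct-sum map gives $\tr \exterior^k T = \tr \exterior^k (T|_{pH})$ for every $k$, since only the top-degree summand of the decomposition of $\exterior^k H$ contributes ($\exterior^j 0 = 0$ for $j \geq 1$, while $\exterior^0 0 = \id_\C$). Hence $\det(\id_H + T) = \det(\id_{pH} + T|_{pH})$, and likewise $\det(\id_K + uTu^*) = \det(\id_{qK} + (uTu^*)|_{qK})$. Finally, for any unitary $w \colon H_1 \to H_2$ and $R \in \sch{1}{H_1}$ one has $\exterior^k(wRw^*) = (\exterior^k w)(\exterior^k R)(\exterior^k w)^*$ with $\exterior^k w$ unitary, so $\det(\id_{H_2} + wRw^*) = \det(\id_{H_1} + R)$ by invariance of the trace under unitary conjugation; applying this to $w = \tilde u$, $R = T|_{pH}$ closes the chain of equalities and yields $\det(\id_K + uTu^*) = \det(\id_H + T)$.

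It remains to check $\id_K + uTu^* \in G(K_\C)$. That $uTu^* \in \sch{1}{K}$ is immediate because $\sch{1}{K}$ is a two-sided ideal of $B(K)$. For invertibility I would use that $\id_K + uTu^*$ commutes with $q$, hence respects $K = qK \oplus (1-q)K$, acting as the identity on $(1-q)K$ and, on $qK$, as the operator unitarily equivalent via $\tilde u$ to $\id_{pH} + T|_{pH}$ --- which is invertible since $\id_H + T \in G(H_\C)$ commutes with $p$ and is the identity on $(1-p)H$. (Equivalently one may just verify that $\id_K - u(\id_H + T)^{-1} T u^*$ is a two-sided inverse, using only $Tp = pT = T$ and the commutation of $(\id_H + T)^{-1}$ with $p$.)

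The argument is essentially bookkeeping, and I do not expect a genuine obstacle: the Fredholm determinant is continuous in trace norm by Remark~\ref{remark:fredholm-determinant-is-continuous} (should one prefer to pass through finite-rank operators in the support reduction), and the facts about exterior powers of direct sums and of unitary conjugates are purely algebraic. The only thing to be disciplined about is keeping track of which operators live on $H$, on $K$, or on the subspaces $pH$ and $qK$.
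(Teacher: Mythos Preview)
Your argument is correct, but it is more elaborate than the paper's. The paper dispatches both claims in two lines. For invertibility it simply exhibits the inverse
\[
(\id_K + u T u^*)(\id_K - u T(\id_H + T)^{-1} u^*) = \id_K,
\]
which is precisely the formula you mention parenthetically at the end. For the determinant equality it bypasses the support reduction entirely: by functoriality of $\exterior^k$ and cyclicity of the trace,
\[
\tr \exterior^k(uTu^*) = \tr\bigl((\exterior^k u)(\exterior^k T)(\exterior^k u)^*\bigr) = \tr\bigl(\exterior^k(u^*u)\,\exterior^k T\bigr) = \tr \exterior^k(u^*u\,T) = \tr \exterior^k T,
\]
and summing over $k$ gives the result.

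Your route---restricting to the initial and final spaces $pH$, $qK$ where $u$ becomes an honest unitary, and then combining the direct-sum behaviour of $\exterior^k$ with unitary invariance---is sound and makes the geometry explicit, but the trace-cyclicity shortcut makes all of that bookkeeping unnecessary. In effect your two steps (support reduction, then unitary conjugation) are fused into the single cyclic move $\tr(ABC) = \tr(CAB)$ on the exterior powers. Note also that both your argument and the paper's tacitly use invertibility of $\id_H + T$ for the $G(K_\C)$ claim; you flag this dependence, which is good.
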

\begin{proof}

Note that we have
  \[
    (\id + u T u^*)(\id - u T(\id + T)^{-1} u^*) = (\id + uT u^*) - u(\id + T)T(\id + T)^{-1} u^* = \id,
  \]
  so that $\id + uT u^* \in G(K_\C)$.

Next, note that $\tr \exterior^k uTu^* = \tr \exterior^k u^* u T = \tr \exterior^k T$ so that indeed $\det(\id_H + T) = \sum_{k=0}^\infty \tr \exterior^k T = \sum_{k=0}^\infty \tr \exterior^k u T u^* = \det(\id_K + uTu^*)$.
\end{proof}

\begin{proposition} \label{proposition:fredholm-determinant-is-pullback-by-frame}
  If $K$ is a separable Hilbert space equipped with a frame $\fram$ and $T \in \sch{1}{K}$, then $\det(1 + \theta_\fram T \theta_\fram^*) = \det(1 + T)$ in terms of the corresponding frame transform $\theta_\fram \colon K \to l^2$.
\end{proposition}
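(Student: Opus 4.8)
The plan is to recognize the frame transform $\theta_\fram$ as a partial isometry to which Lemma~\ref{lemma:fredholm-determinant-invariant-under-commuting-partial-isometries} applies verbatim. By Proposition~\ref{proposition:frame-transform} we have $\theta_\fram^* \theta_\fram = \id_K$, so $\theta_\fram \colon K \to l^2$ is an isometry and, in particular, a partial isometry whose source projection $\theta_\fram^* \theta_\fram$ is the identity on $K$. Consequently the operator $\theta_\fram T \theta_\fram^*$ on $l^2$ is again trace class (either because $\sch{1}{l^2}$ is a two-sided ideal in $B(l^2)$ and $\theta_\fram, \theta_\fram^*$ are bounded, or as part of the conclusion of the Lemma), and it satisfies $P_\fram (\theta_\fram T \theta_\fram^*) = \theta_\fram T \theta_\fram^* = (\theta_\fram T \theta_\fram^*) P_\fram$ for $P_\fram = \theta_\fram\theta_\fram^*$.

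Since $\theta_\fram^* \theta_\fram = \id_K$, the commutation hypothesis $\theta_\fram^* \theta_\fram\, T = T\, \theta_\fram^* \theta_\fram = T$ required by Lemma~\ref{lemma:fredholm-determinant-invariant-under-commuting-partial-isometries} holds trivially for every $T \in \sch{1}{K}$. I would then apply that Lemma with the partial isometry $u = \theta_\fram$ (so that its source Hilbert space is $K$ and its target is $l^2$): it gives first that $\id_{l^2} + \theta_\fram T \theta_\fram^* \in G(l^2_\C)$, and then the identity
\[
  \det(\id_{l^2} + \theta_\fram T \theta_\fram^*) = \det(\id_K + T),
\]
which is exactly the assertion of the proposition.

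I expect essentially no obstacle here: the only things to verify are that $\theta_\fram$ is a partial isometry, which is immediate from $\theta_\fram^* \theta_\fram = \id_K$, and the (vacuous) commutation condition. Should one prefer a self-contained argument not routed through the Lemma, one could instead observe directly that $\tr \exterior^k (\theta_\fram T \theta_\fram^*) = \tr \exterior^k (\theta_\fram^* \theta_\fram T) = \tr \exterior^k T$ for each $k$, using cyclicity of the Hilbert-space trace on $\exterior^k l^2$, and then sum over $k$ using the absolute convergence recalled after Definition~\ref{definition:fredholm-determinant-on-hilbert-space}; but invoking Lemma~\ref{lemma:fredholm-determinant-invariant-under-commuting-partial-isometries} is cleaner and also delivers the invertibility statement for free.
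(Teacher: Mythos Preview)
Your proof is correct and matches the paper's approach exactly: the paper simply notes that $\theta_\fram^* \theta_\fram = \id_K$ commutes with $T$ and invokes Lemma~\ref{lemma:fredholm-determinant-invariant-under-commuting-partial-isometries}. Your additional remarks (trace-class of $\theta_\fram T \theta_\fram^*$, the alternative via $\tr \exterior^k$) are fine but not needed.
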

\begin{proof}
Since $\theta_\fram^* \theta_\fram = \id_K$ commutes with $T$ we can apply Lemma~\ref{lemma:fredholm-determinant-invariant-under-commuting-partial-isometries}.
\end{proof}

\begin{definition} \label{definition:fredholm-determinant}
  Let $E_A$ be a countably generated Hilbert $C^*$-module over a unital and abelian \cstar-algebra $A$.
  For $T \in \sch{1}{E_A}$, the Fredholm determinant $\det(\id + T)$ of $\id + T$ is the function on $\widehat{A}$ given by $\chi \mapsto \det(\chi_*(\id + T))$.
\end{definition}

\begin{proposition}
 Let $A$ be unital and abelian as above. For $T \in \sch{1}{E_A}$, the Fredholm determinant lies in $A \equiv C(\widehat A)$ and as such we have $\chi(\det (\id +T)) = \det ( \chi_* (\id +  T))$.
\end{proposition}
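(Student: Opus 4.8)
The second identity will be immediate from Definition~\ref{definition:fredholm-determinant} once we know that the function $\chi \mapsto \det(\chi_*(\id+T))$ is continuous on $\gfdual A$: it is then an element of $C(\gfdual A) \equiv A$ whose value at a character $\chi$ equals, via the Gelfand transform, $\det(\chi_*(\id+T))$. So the entire content is continuity, and the plan is to transport the problem to the standard module, where Theorem~\ref{theorem:lp-iff-tr-tp-in-c0x} supplies a genuine trace-norm--continuous family of operators on a \emph{fixed} Hilbert space, to which the continuity of the ordinary Fredholm determinant (Remark~\ref{remark:fredholm-determinant-is-continuous}) applies directly.

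First I would fix a frame $\fram$ of $E_A$ with frame transform $\theta_\fram \colon E_A \to l^2(A)$ and set $S \eqdef \theta_\fram T \theta_\fram^*$, which lies in $\sch{1}{l^2(A)}$ by Theorem~\ref{theorem:nonstandard-schatten-class-is-pullback-by-frame}. Since $A$ is unital, $\gfdual A$ is compact and $l^2(A) \simeq C(\gfdual A, l^2)$ (Proposition~\ref{proposition:c0-x-otimes-h-is-c0-x-h}), so Theorem~\ref{theorem:lp-iff-tr-tp-in-c0x} with $H = l^2$ identifies $\sch{1}{l^2(A)}$ with $C(\gfdual A, \sch{1}{l^2})$; in particular $\chi \mapsto \chi_* S$ is continuous from $\gfdual A$ into $\sch{1}{l^2}$ in the trace norm. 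Next, for each $\chi$, surjectivity of $\chi \colon A \to \C$ gives, by Proposition~\ref{proposition:phistar-of-frame-is-frame}, that $\chi_*(\fram)$ is a frame of the separable Hilbert space $\chi_* E_A$ with frame transform $\chi_* \theta_\fram$, while $\chi_* T \in \sch{1}{\chi_* E_A}$ because $\tr|\chi_* T| = \chi(\tr|T|)$. Applying Proposition~\ref{proposition:fredholm-determinant-is-pullback-by-frame} on the Hilbert space $\chi_* E_A$ then yields
\[
  \det(\chi_*(\id + T)) = \det(\id + \chi_* T) = \det\bigl(\id + \chi_*(\theta_\fram T \theta_\fram^*)\bigr) = \det(\id + \chi_* S).
\]
Composing the continuous map $\chi \mapsto \chi_* S$ with the continuous function $R \mapsto \det(\id + R)$ on $\sch{1}{l^2}$ shows that $\chi \mapsto \det(\chi_*(\id + T))$ is continuous on the compact space $\gfdual A$, hence lies in $C(\gfdual A) \equiv A$; evaluating at $\chi$ recovers $\chi(\det(\id+T)) = \det(\chi_*(\id+T))$.

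The argument is essentially a chain of identifications, and the only step doing real work is the appeal to Theorem~\ref{theorem:lp-iff-tr-tp-in-c0x} that upgrades the a priori merely pointwise family $\chi \mapsto \chi_* S$ to a trace-norm--continuous one. I expect the minor bookkeeping of verifying that $\chi_* \theta_\fram$ is indeed the frame transform of $\chi_*(\fram)$ --- so that Proposition~\ref{proposition:fredholm-determinant-is-pullback-by-frame} may be applied fiberwise --- to be the only other point requiring care; everything else is formal.
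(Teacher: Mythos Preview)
Your proof is correct and follows essentially the same route as the paper: transport $T$ to $S=\theta_\fram T\theta_\fram^*\in\sch{1}{l^2(A)}$ via a frame, use Proposition~\ref{proposition:fredholm-determinant-is-pullback-by-frame} fiberwise to rewrite $\det(\chi_*(\id+T))=\det(\id+\chi_*S)$, and then invoke Theorem~\ref{theorem:lp-iff-tr-tp-in-c0x} together with Remark~\ref{remark:fredholm-determinant-is-continuous} for continuity. Your write-up is in fact more explicit than the paper's about the trace-norm continuity step and about why $\chi_*\theta_\fram$ is the frame transform of $\chi_*(\fram)$, but the underlying argument is the same.
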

\begin{proof}
  Let $\fram$ be a frame of $E_A$.
  Note that, for all $\chi \in \gfdual{A}$, $\det(\chi_*(\id + T)) = \det(\chi_*(\id + \theta_\fram T \theta_\fram^*))$ by Proposition~\ref{proposition:fredholm-determinant-is-pullback-by-frame}.
  That is, $\det(\id + T) = \det(\id + \theta_\fram T \theta_\fram^*)$.
  Now, $\theta_\fram T \theta_\fram^* \in \sch{1}{l^2(A)}$ by Theorem~\ref{theorem:nonstandard-schatten-class-is-pullback-by-frame}.
  With Remark~\ref{remark:fredholm-determinant-is-continuous} we see that $\chi \mapsto \det  (\id + \chi_*S)$ is continuous whenever $S \in \sch{1}{l^2(A)}$.
  Thus, since $A$ is unital (and thus $\widehat{A}$ compact) we find that $\det(\id + T) \in C(\widehat A) = A$.
\end{proof}

\begin{proposition}
  Let $T \in C(X, \sch{1}{H})$ with $X=\widehat{A}$.
  Then we have $\exterior^k(T) \in C\left(X, \sch{1}{\exterior^k H}\right)$. 
  In particular, one has $\det(\id + z T) = \sum_{k \geq 0} z^k \tr \exterior^k(T) $, and $z \mapsto \det(\id + z T)$ is entire (as an $A$-valued function on $\C$).
\end{proposition}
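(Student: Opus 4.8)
The plan is to treat $\exterior^k T$ as the family $x \mapsto \exterior^k\big(T(x)\big)$ of trace-class operators on $\exterior^k H$, show that this family is itself Schatten-$1$-norm continuous, and then obtain the determinant fibrewise from Definition~\ref{definition:fredholm-determinant}. \emph{Step 1.} Pointwise one has $\exterior^k\big(T(x)\big) \in \sch{1}{\exterior^k H}$ with $\norm{\exterior^k T(x)}_1 \leq \norm{T(x)}_1^k/k!$ (the estimate recalled after Definition~\ref{definition:fredholm-determinant-on-hilbert-space}). For continuity in trace norm one expands $S \mapsto \exterior^k S$ multilinearly: for $S, R \in \sch{1}{H}$ the difference $\exterior^k S - \exterior^k R$ is a sum of $k$ terms, each built from one factor $S-R$ and $k-1$ factors drawn from $\{S, R\}$, and the standard trace-norm estimates for exterior powers (\cite[Chapter 3]{MR2154153}, as already used in Remark~\ref{remark:fredholm-determinant-is-continuous}) give a bound of the shape $\norm{\exterior^k S - \exterior^k R}_1 \leq k \max(\norm{S}_1, \norm{R}_1)^{k-1} \norm{S-R}_1$. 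Composing with the continuous map $x \mapsto T(x)$ shows $x \mapsto \exterior^k T(x)$ is continuous into $\sch{1}{\exterior^k H}$; since $X = \gfdual{A}$ is compact ($A$ being unital), vanishing at infinity is automatic, so $\exterior^k T \in C(X, \sch{1}{\exterior^k H})$, as claimed.

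\emph{Step 2.} Identifying $C_0(X, \exterior^k H)$ with the Hilbert $A$-module $(\exterior^k H)\otimes_\C A$, Remark~\ref{remark:topological-schatten-class-in-operator-schatten-class} applied to $\exterior^k H$ gives $\exterior^k T \in \sch{1}{C_0(X, \exterior^k H)}$, so $\tr \exterior^k T \in A$ is well-defined and Corollary~\ref{corollary:trace-is-pointwise} yields $\chi(\tr \exterior^k T) = \tr \exterior^k(\chi_* T)$ for every character $\chi$ (using that the localization of $\exterior^k T$ at $\chi$ is $\exterior^k(\chi_* T)$). Step 1 also gives $\norm{\tr \exterior^k T}_A \leq \sup_x \norm{T(x)}_1^k/k! = \norm{T}^k/k!$, where $\norm{T}$ denotes the norm of $T$ in $C(X, \sch{1}{H})$. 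Hence for each $z \in \C$ the series $\sum_{k\geq 0} z^k \tr \exterior^k T$ converges absolutely in the Banach space $A$; call its sum $a(z)$. By Definitions~\ref{definition:fredholm-determinant} and~\ref{definition:fredholm-determinant-on-hilbert-space} together with $\exterior^k(zS) = z^k\exterior^k S$, one has $\chi\big(\det(\id + zT)\big) = \det(\id + z\,\chi_* T) = \sum_{k\geq 0} z^k \tr \exterior^k(\chi_* T) = \chi(a(z))$ for every $\chi$, and since the characters separate $A$ this gives $\det(\id + zT) = \sum_{k\geq 0} z^k \tr \exterior^k T$ in $A$.

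\emph{Step 3.} The bound $\norm{\tr \exterior^k T}_A \leq \norm{T}^k/k!$ exhibits $z \mapsto \det(\id + zT)$ as an $A$-valued power series of infinite radius of convergence, hence entire. The only real work is in Step 1: one has to pin down the trace-norm Lipschitz bound for $S \mapsto \exterior^k S$ (the quantitative form of the continuity underlying Remark~\ref{remark:fredholm-determinant-is-continuous}) and check that the fibrewise exterior powers genuinely assemble into an adjointable operator on $C_0(X, \exterior^k H) \simeq (\exterior^k H)\otimes_\C A$, so that the $A$-valued trace defined above applies to it. Everything else is bookkeeping with the fibrewise trace of Corollary~\ref{corollary:trace-is-pointwise}.
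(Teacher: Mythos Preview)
Your proposal is correct and follows essentially the same route as the paper: both establish trace-norm continuity of $x \mapsto \exterior^k T(x)$ via a telescoping/multilinear estimate (the paper writes it recursively as $\exterior^{k+1}A - \exterior^{k+1}B = (\exterior^k A - \exterior^k B)\wedge A + \exterior^k B \wedge (A-B)$ and iterates to get $\norm{\exterior^k A - \exterior^k B}_1 \leq \norm{A-B}_1 \sum_{m=0}^{k-1} \norm{A}_1^m \norm{B}_1^{k-1-m}$, which is the same bound you obtain up to constants), and then both use the pointwise estimate $\norm{\exterior^k T(x)}_1 \leq \norm{T(x)}_1^k/k!$ to conclude absolute convergence of the series and entirety. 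Your extra bookkeeping in Step~2---routing $\tr \exterior^k T$ through Remark~\ref{remark:topological-schatten-class-in-operator-schatten-class} and Corollary~\ref{corollary:trace-is-pointwise}---is a fine way to make explicit what the paper leaves implicit.
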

\begin{proof}
  Let $A, B \in \sch{1}{H}$ and note that $\exterior^{k+1}(A) - \exterior^{k+1}(B) = (\exterior^k(A) - \exterior^k(B)) \wedge A + \exterior^k(B) \wedge (A - B)$, which can be iterated to yield
  \[
\norm{\exterior^k(A) - \exterior^k(B)}_1 \leq \norm{A-B}_1 \sum_{m=0}^{k-1} \norm{A}_1^m \norm{B}_1^{k-1-m} .
\]
As a consequence, we see that $\exterior^k(T) \in C\left(X, \sch{1}{\exterior^k H} \right)$ whenever $T$ lies in $C(X, \sch{1}{H})$.

Moreover, we have the pointwise series expression
$$
\det (\chi_*(\id + z T))= \det (\id + z ~ \chi_* T)  = \sum_{k \geq 0} z^k \tr \exterior^k(\chi_*T)
$$
as in \cite[Lemma 3.3]{MR2154153}. Since $\tr \exterior^k(\chi_* T) \leq \norm{\chi_* T}_1^k / k!$ the series $\sum_{k \geq 0} z^k \tr \exterior^k(T)$ in $A$ converges absolutely for all $z \in \C$.
This implies in particular that $z \mapsto \det(\id + z T)$ is entire.
\end{proof}

\begin{remark}
  If $f \in A$ is invertible, then $\det(\id - f^{-1} T) = 0$ in $A$ if and only if $\chi(f)=f(x)$ is a (nonzero) eigenvalue of $\chi_* T$ for all $\chi \in \widehat{A}$ (corresponding to the point $x \in X$).
\end{remark}

\begin{proposition} \label{proposition:fredholm-determinant-is-homomorphism}
  The Fredholm determinant is multiplicative in the sense that $\det(\id + T)(\id + S) = \det(\id + T) \det(\id + S)$.
\end{proposition}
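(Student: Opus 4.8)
The plan is to reduce the identity, character by character, to the classical multiplicativity of the Fredholm determinant on a Hilbert space. First I would note that $(\id+T)(\id+S) = \id + T'$ with $T' \coloneqq T + S + TS$, and since $\sch{1}{E_A}$ is a two-sided ideal of $\lnmp{E_A}$ (Theorem~\ref{theorem:schp-is-ideal-and-norm-has-properties}) that in particular absorbs products, $T' \in \sch{1}{E_A}$; being a product of two invertibles, $(\id+T)(\id+S)$ is itself invertible, so it lies in $G(E_A)$ and $\det\big((\id+T)(\id+S)\big)$ is defined, by Definition~\ref{definition:fredholm-determinant}, as the element $\chi \mapsto \det\big(\chi_*((\id+T)(\id+S))\big)$ of $A \equiv C(\gfdual{A})$.

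Next I would push the product through the localization maps. For a character $\chi$ of $A$, the induced map $\chi_* \colon \lnmp{E_A} \to \lnmp{\chi_* E_A} = B(\chi_* E_A)$ is a unital $\ast$-homomorphism (Proposition~\ref{proposition:star-homomorphism-as-inner-product-of-modules}), hence preserves products and fixes $\id$; moreover it sends $\sch{1}{E_A}$ into the trace class of the separable Hilbert space $\chi_* E_A$, since $\chi(\tr |T|) = \tr |\chi_* T|$. Therefore $\chi_*\big((\id+T)(\id+S)\big) = (\id + \chi_* T)(\id + \chi_* S)$ with $\chi_* T$ and $\chi_* S$ trace-class operators on $\chi_* E_A$, and $\chi\big(\det(\id+T)\big) = \det(\id + \chi_* T)$, similarly for $S$.

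Now I would invoke the classical fact that the Fredholm determinant on a Hilbert space is multiplicative, i.e.\ $\det\big((\id+R)(\id+R')\big) = \det(\id+R)\det(\id+R')$ for trace-class $R,R'$ (see e.g.\ \cite[Chapter 3]{MR2154153}). Applying this to $R = \chi_* T$, $R' = \chi_* S$ gives $\chi\big(\det((\id+T)(\id+S))\big) = \chi(\det(\id+T))\,\chi(\det(\id+S)) = \chi\big(\det(\id+T)\det(\id+S)\big)$ for every $\chi \in \gfdual{A}$. Since the characters of the commutative $C^*$-algebra $A$ separate its points, the two elements of $A$ coincide, which is exactly the assertion. (An alternative is to first transport everything to the standard module $l^2(A)$ via a frame transform, using Proposition~\ref{proposition:fredholm-determinant-is-pullback-by-frame}, but that leads to the same pointwise reduction.)

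The only genuinely external ingredient is the Hilbert-space multiplicativity, and that is the step I expect to carry the weight of the argument — though it is entirely classical. It can be had either by citation, or by a self-contained argument: by continuity of $\det$ on $\sch{1}{H}$ (Remark~\ref{remark:fredholm-determinant-is-continuous}) and density of the finite-rank operators in $\sch{1}{H}$, it reduces to the identity $\det(MN) = \det(M)\det(N)$ for matrices on a finite-dimensional subspace containing the relevant ranges. Everything on the Hilbert-module side is then formal, once one has verified that $(\id+T)(\id+S) \in G(E_A)$ so that its Fredholm determinant is even defined.
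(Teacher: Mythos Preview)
Your proposal is correct and follows exactly the paper's approach: reduce the identity character by character to the classical multiplicativity of the Fredholm determinant on a Hilbert space, then conclude by the fact that the characters of $A$ separate points. Your write-up is in fact more careful than the paper's, which simply displays the chain of equalities $\chi(\det((\id+T)(\id+S))) = \det(\chi_*((\id+T)(\id+S))) = \det(\chi_*(\id+T))\det(\chi_*(\id+S)) = \chi(\det(\id+T))\chi(\det(\id+S))$ without explicitly checking that $(\id+T)(\id+S) \in G(E_A)$.
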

\begin{proof}
  This follows simply from the analogous property of the Fredholm determinant on Hilbert spaces, since
  \begin{align*}
    \chi(\det((\id + T)(\id + S)) )&= \det (\chi_*((\id + T)(\id + S) )) \\
    &= \det (\chi_*(\id + T))   \det (\chi_* (\id + S) ))  \\
    &= \chi(\det(\id + T) \chi(\det(\id  + S)),
  \end{align*}
  for all $\id + T$, $\id + S$ in $G(E_A)$ and all $\chi \in \gfdual{A}$. 
\end{proof}

\begin{proposition}
  For $0 \leq |z| < \norm{T}_1^{-1}$, the Fredholm determinant satisfies
  \[
    \det(\id + z T) = \exp\left(
      \sum_{n=1}^\infty \frac{(-1)^{n+1}}{n} z^n \tr T^n
      \right),
  \]
  and the series converges absolutely.
\end{proposition}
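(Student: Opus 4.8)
\emph{Proposed proof.} The plan is to reduce, via the separating family of characters $\chi \in \widehat{A}$, to the classical Hilbert-space identity $\det(\id + S) = \exp\!\big(\sum_{n\geq1}\frac{(-1)^{n+1}}{n}\tr S^n\big)$, valid for trace-class $S$ with $\norm{S} < 1$ (see e.g.\ \cite[Chapter 3]{MR2154153}), in the spirit of the proof of Proposition~\ref{proposition:fredholm-determinant-is-homomorphism}.

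First I would check that the right-hand side is a well-defined element of $A$. Since $\sch{1}{E_A}$ is a two-sided ideal, $T^n \in \sch{1}{E_A}$ for all $n$, and Theorem~\ref{thm:norm-prop-Lp} gives $\norm{T^n}_1 \leq \norm{T}\,\norm{T^{n-1}}_1 \leq \norm{T}_1^n$. Moreover $\norm{\tr S}_A \leq \norm{\tr|S|}_A = \norm{S}_1$ for every $S \in \sch{1}{E_A}$, using $|\tr S| \leq \tr|S|$ in $A$. Hence $\norm{\tfrac{(-1)^{n+1}}{n} z^n \tr T^n}_A \leq \tfrac1n(|z|\,\norm{T}_1)^n$, which is summable whenever $|z| < \norm{T}_1^{-1}$; so the series converges absolutely in the Banach space $A$ to some $L(z) \in A$, and (as $A$ is unital here) $\exp L(z)$ is a well-defined element of $A$.

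Next I would evaluate both sides at an arbitrary $\chi \in \widehat{A}$. On one side, $\chi(\det(\id + zT)) = \det(\chi_*(\id + zT)) = \det(\id + z\,\chi_*T)$ by Definition~\ref{definition:fredholm-determinant}; and since $\chi_*$ is a $\ast$-homomorphism, $\norm{z\,\chi_*T} \leq |z|\,\norm{\chi_*T}_1 \leq |z|\,\norm{T}_1 < 1$, so the classical formula applies on the Hilbert space $\chi_* E_A$ and yields $\det(\id + z\chi_*T) = \exp\!\big(\sum_{n\geq1}\frac{(-1)^{n+1}}{n}z^n\tr(\chi_*T)^n\big)$. Using $(\chi_*T)^n = \chi_*(T^n)$ together with $T^n \in \sch{1}{E_A}$ and Corollary~\ref{corollary:trace-is-pointwise}, this exponent equals $\sum_{n\geq1}\frac{(-1)^{n+1}}{n}z^n\chi(\tr T^n) = \chi(L(z))$, the last step by continuity of $\chi$ and norm-convergence of the $L(z)$-series. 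On the other side, $\chi$ is a continuous unital homomorphism, so $\chi(\exp L(z)) = \exp(\chi(L(z)))$. Therefore $\chi(\det(\id + zT)) = \chi(\exp L(z))$ for all $\chi \in \widehat{A}$, and since the characters separate the points of $A$ we conclude $\det(\id + zT) = \exp L(z)$, which is the assertion.

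I do not expect a genuine obstacle: the only delicate points are verifying that the single radius $\norm{T}_1^{-1}$ is enough fiberwise (it is, since $\norm{\chi_*T} \leq \norm{\chi_*T}_1 \leq \norm{T}_1$) and justifying the two interchanges of $\chi$ with the infinite sum and with $\exp$, both of which follow from norm convergence in $A$ and continuity (resp.\ multiplicativity) of $\chi$. If one prefers to avoid invoking $\log$ on a Hilbert space, the same character argument applies to the already-established expansion $\det(\id + zT) = \sum_{k\geq0} z^k \tr\exterior^k T$, using the Hilbert-space Plemelj--Smithies identity fiberwise instead; but the route through $\log$ is the shortest.
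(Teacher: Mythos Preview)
Your proof is correct and follows essentially the same approach as the paper: establish absolute convergence via $\norm{\tr T^n}_A \leq \norm{T}_1^n$, then reduce to the Hilbert-space identity by evaluating both sides at each character and using that the characters separate $A$. The only cosmetic difference is that the paper, instead of citing the classical formula, spells it out via Lidskii's theorem as $\sum_n \tfrac{(-1)^{n+1}}{n} z^n \tr T^n = \sum_k \log(1 + z\lambda_k(T))$ before exponentiating.
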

\begin{proof}
  Absolute convergence follows from the fact that $\norm{\tr T^n} \leq \norm{T^n}_1 \leq \norm{T}_1^n$, so that the sum of the norms of the summands is bounded by $\log (1 + z \norm{T}_1)$.
  
  As the characters separate $A$, it will suffice to prove the formula for the case $E_A = H_\C$, where it is well-known.
  For $|z| \norm{T}_1 < 1$, by absolute convergence of the trace and Lidskii's theorem,
  \begin{align*}
    \sum_{n=1}^\infty \frac{(-1)^{n+1}}{n} z^n \tr T^n &= \sum_{n=1}^\infty (-1)^{n+1} z^n \sum_{k} \frac{\lambda_k(T)^n}{n} \\
    &= \sum_{k} \log(1 + z \lambda_k(T)),
  \end{align*}
  so that the exponential of the right-hand side equals $\det(\id + z T)$ by Definition~\ref{definition:fredholm-determinant-on-hilbert-space}.
\end{proof}

\begin{proposition}
  Let $T \in \sch{1}{E_A}$.
  Then $\id + T$ is invertible (that is, $\id +T \in G(E_A)$) if and only if $\det (\id + T) \in A$ is invertible.
\end{proposition}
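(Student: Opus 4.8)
The plan is to reduce the statement, through evaluation at characters and (for the harder direction) through a frame transform, to the classical Hilbert-space fact that for a trace-class operator $S$ on a Hilbert space $K$ the operator $\id_K+S$ is invertible if and only if $\det(\id_K+S)\neq 0$ (see e.g. \cite{MR2154153}). We use freely the preceding proposition, which gives $\chi(\det(\id_{E_A}+T))=\det(\id+\chi_*T)$ for every character $\chi$ of $A$, and the fact that, $A$ being unital, $\gfdual{A}$ is compact, so that $f\in A\simeq C(\gfdual{A})$ is invertible precisely when it vanishes nowhere.

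\textbf{The direction $\id_{E_A}+T$ invertible $\Rightarrow \det(\id_{E_A}+T)$ invertible.} If $\id_{E_A}+T\in G(E_A)$ then, since each localization $\chi_*\colon\lnmp{E_A}\to B(\chi_*E_A)$ is a unital $\ast$-homomorphism, $\id+\chi_*T=\chi_*(\id_{E_A}+T)$ is invertible on the Hilbert space $\chi_*E_A$; by the classical fact $\det(\id+\chi_*T)=\chi(\det(\id_{E_A}+T))\neq 0$, and as this holds for all $\chi$ the element $\det(\id_{E_A}+T)$ vanishes nowhere on the compact space $\gfdual{A}$, hence is invertible in $A$.

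\textbf{Reduction for the converse.} Fix a frame $\fram$ of $E_A$ with frame transform $\theta_\fram$. Using $\theta_\fram^*\theta_\fram=\id_{E_A}$ one checks the identity $\id_{E_A}+T=\theta_\fram^*\bigl(\id_{l^2(A)}+\theta_\fram T\theta_\fram^*\bigr)\theta_\fram$ and, more precisely, that $\id_{E_A}+T$ is invertible in $\lnmp{E_A}$ exactly when $\id_{l^2(A)}+\theta_\fram T\theta_\fram^*$ is invertible in $\lnmp{l^2(A)}$; moreover $\det(\id_{E_A}+T)=\det(\id_{l^2(A)}+\theta_\fram T\theta_\fram^*)$ and $\theta_\fram T\theta_\fram^*\in\sch{1}{l^2(A)}$ by Theorem~\ref{theorem:nonstandard-schatten-class-is-pullback-by-frame}. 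So we may assume $E_A=l^2(A)=H\otimes_\C A\simeq C(X,H)$ with $X=\gfdual{A}$ compact, and then by Theorem~\ref{theorem:lp-iff-tr-tp-in-c0x} the operator $T\in\sch{1}{C(X,H)}$ is represented by a trace-norm continuous---hence operator-norm continuous---family $x\mapsto T(x)\in\sch{1}{H}$.

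\textbf{The assembling step, and the main obstacle.} Suppose $\det(\id+T)\in C(X)$ is invertible. Evaluating at $x\in X$ gives $\det(\id_H+T(x))\neq 0$, so each $\id_H+T(x)$ is invertible on $H$ by the classical fact. Thus $x\mapsto\id_H+T(x)$ is a norm-continuous map from the compact space $X$ into the open set of invertible operators in $B(H)$; since inversion is norm-continuous on that set, $x\mapsto(\id_H+T(x))^{-1}$ is norm-continuous, and its image, being the continuous image of a compact set, is norm-bounded. Hence $x\mapsto(\id_H+T(x))^{-1}$ is an element of $C(X,B(H))\subset\cbstr(X,B(H))=\lnmp{C(X,H)}$ which is clearly a two-sided inverse of $\id+T$, so $\id+T\in G(C(X,H))$; unwinding the frame reduction gives $\id_{E_A}+T\in G(E_A)$. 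I expect the only genuine obstacle to be exactly this last step---passing from pointwise invertibility to an honest adjointable inverse, which is precisely why the argument must first be transported, via a frame, to a trivialized Hilbert module; everything else is routine use of the character-localization techniques already developed.
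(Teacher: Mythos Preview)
Your proof is correct and follows the same overall skeleton as the paper's: reduce via a frame transform to the standard module $C(X,H)$ with $X$ compact, invoke the classical Hilbert-space result pointwise, then assemble the fiberwise inverses into an adjointable operator. The reduction and the equivalence of invertibility under $\theta_\fram(\cdot)\theta_\fram^*$ are handled correctly.

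There are, however, two genuine differences worth noting. For the easy direction, the paper simply invokes multiplicativity of the Fredholm determinant (Proposition~\ref{proposition:fredholm-determinant-is-homomorphism}) to see that $\det((\id+T)(\id+T)^{-1})=1$ forces $\det(\id+T)$ to be invertible; your localization argument works just as well but is slightly less direct. For the harder direction, the paper takes a more elaborate route: it first proves \emph{uniform boundedness} of $x\mapsto(\id_H+T(x))^{-1}$ by a spectral argument---if eigenvalues of the $T(x)$ could accumulate at $-1$, then $\det(\id+T)$, being a norm limit of the non-invertible elements $\det(\id-\lambda_i^{-1}T)$, would itself fail to be invertible---and only then deduces continuity from the resolvent-type inequality $\norm{S_1^{-1}-S_2^{-1}}\leq\norm{S_1^{-1}}\norm{S_2^{-1}}\norm{S_1-S_2}$. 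Your argument is more economical: you observe that $x\mapsto\id_H+T(x)$ is norm-continuous into the open set of invertibles, use continuity of inversion there, and then get boundedness for free from compactness of $X$. This is cleaner and exploits the unitality of $A$ directly; the paper's eigenvalue argument, by contrast, makes the role of the determinant more visible but is not needed for the conclusion.
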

\begin{proof}
  By Proposition~\ref{proposition:fredholm-determinant-is-homomorphism}, $\det(\id + T)$ is invertible whenever $\id + T$ is.

For the converse statement, in view of Proposition~\ref{proposition:fredholm-determinant-is-pullback-by-frame} we may assume without loss of generality  that $E_A = l^2(A) \simeq C(X,H)$. So, if $\det(\id + T)$ is invertible then it follows that $\id + \chi_*T$ is invertible for all $\chi$ by \cite[Theorem 3.5b)]{MR2154153}.
  Moreover, as $\norm{S_1^{-1} - S_2^{-1}}_1 \leq \norm{S_1^{-1}} \norm{S_2^{-1}} \norm{S_1-S_2}_1$, the $B(H)$-valued map $\chi \mapsto (\id + \chi_*T)^{-1}$ lies in $C_b(X, B(H)))$ whenever it is bounded.

  Now, for all eigenvalues $\lambda \in \sigma(\chi_*T)$ we have $\det(\id - \lambda^{-1} \chi_*T) = 0$ so that $\det(\id - \lambda^{-1} T)$ is not invertible in $A$.
  Thus, if there exist a sequence $\lambda((\chi_i)_* T) \in \sigma((\chi_i)_* T)$ converging (in $\C$) to $-1$, the element $\det(\id + T) = \lim_{i} \det(\id - \lambda(x_i)^{-1} T)$ is contained in the closed set consisting of the non-invertible elements of $A$, which contradicts the assumpion.

We conclude that there exists $\mu > 0$ with  $\inf_n |1 + \lambda_n(\chi_* T)| > \mu$ uniformly for $\chi \in \widehat{A}$.
In particular, $\norm{(\id + \chi_* T)^{-1}} = \sup_n |1 + \lambda_n( \chi_* T)|^{-1} < 1 / \mu$ for all $x$.
We conclude that $\chi \mapsto (\id + \chi_*T)^{-1}$ is bounded and therefore continuous.
\end{proof}

\begin{corollary}
  The Fredholm determinant is a homomorphism from the group $G(E_A)$ to the group of invertible elements of $A$.
\end{corollary}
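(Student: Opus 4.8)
The plan is to simply assemble the two preceding propositions. First I would recall that $G(E_A)$ is a group under composition, as was observed immediately after Definition~\ref{definition:fredholm-determinant}. Restricted to $G(E_A)$, the Fredholm determinant is multiplicative by Proposition~\ref{proposition:fredholm-determinant-is-homomorphism}, i.e. $\det\big((\id + T)(\id + S)\big) = \det(\id + T)\det(\id + S)$ for all $\id + T$, $\id + S \in G(E_A)$. It is moreover unital: evaluating Definition~\ref{definition:fredholm-determinant} on $\id_{E_A} = \id + 0$, only the $k=0$ term of the series survives and one gets $\det(\id_{E_A}) = \tr \exterior^0 0 = 1_A$ (equivalently, $\chi(\det(\id_{E_A})) = \det(\id_{\chi_* E_A}) = 1$ for every character $\chi$).

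It then remains to verify that $\det$ actually takes values in the group $A^\times$ of invertible elements of $A$, rather than merely in the multiplicative monoid of $A$. But every element of $G(E_A)$ is, by definition, an invertible operator of the form $\id + T$ with $T \in \sch{1}{E_A}$, and the proposition immediately preceding this corollary states precisely that $\id + T$ is invertible if and only if $\det(\id + T) \in A$ is invertible. Hence $\det$ maps $G(E_A)$ into $A^\times$, and a multiplicative, unital map from a group into a group is automatically a group homomorphism (it then also intertwines inverses: $\det((\id+T)^{-1}) = \det(\id+T)^{-1}$). I do not anticipate any real obstacle here, since the content of the statement has already been distributed over the two preceding results, and the proof is essentially a one-line combination of them; the only point deserving an explicit word is that invertibility of $\id + T$ — built into membership of $G(E_A)$ — is exactly what forces the determinant into $A^\times$.
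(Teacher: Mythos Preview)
Your proposal is correct and matches the paper's approach: the corollary is stated without proof in the paper, being an immediate consequence of the two preceding propositions (multiplicativity and the invertibility criterion), exactly as you assemble them. One small quibble: the observation that $G(E_A)$ is a group appears right after the definition of $G(E_A)$, not after Definition~\ref{definition:fredholm-determinant}, so that reference should be adjusted.
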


In particular, the map $\det$ extends the (matrix) determinant homomorphism $K_1^{\text{alg}}(A) = GL_\infty(A) / [GL_\infty(A), GL_\infty(A)] \to A$ to all of $G(l^2(A))$.

\begin{remark}
  In \cite{MR424786,MR432738} it was shown that the Fredholm determinant $\det (\id + z T)$ is the unique additive invariant of an endomorphism $T: E \to E$ on finitely generated projective $A$-modules. It is an interesting open question to see under which additional conditions this result extends to the countably generated Hilbert module context. Clearly, the Fredholm determinant discussed above gives rise to a additive map from a countably generated Hilbert $A$-module $E$ equipped with a trace-class operator $T$ to analytic $A$-valued functions. 
\end{remark}

\subsection{The zeta function}
\label{section:zeta-function}

As a second application we consider zeta functions associated to positive Schatten class operators on a Hilbert module. Again, $A $ is a commutative $C^*$-algebra and we identify $\widehat A =X$ so that $A \cong  C_0(X)$.



\begin{definition}
  \label{definition:zeta-function}
  Let $0 \leq T \leq 1 \in \sch{p}{E_A}$, for $p \geq 1$.
  For $z \in \C$ with $\Re z > p$, define $T^z$ using the continuous functional calculus in $\lnmp{E_A}$. 
Then, the associated \emph{zeta function} is the function on the complex half-plane $\Re z > p$ given by
  \[
    \zeta(z, T) \eqdef \tr T^z.
  \]
\end{definition}

We will show that the zeta function is in fact \emph{holomorphic} (in the sense of Banach space-valued holomorphic functions, see e.g. \cite[Definition 3.30]{MR1157815}) on the defining half-plane.

First, let us recall 
that the individual (fiberwise) eigenvalues of positive compact operators are themselves continuous. 

\begin{lemma}
\label{lemma:eigenvalues-individually-continuous}
  Let $0 \leq T \in \cptmp{E_A}$.
  For $k \geq 0$ and $\chi \in \gfdual{A}$ be the character corresponding to the point $x \in X$, let $\{ \lambda_k(x)\}_k$ be the eigenvalues of $\chi_* T$, in decreasing order with multiplicity. 
  Then, the map $x \mapsto \lambda_k(x)$ is an element of $A$.
\end{lemma}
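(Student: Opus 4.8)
The plan is to reduce the statement to a known continuity result for eigenvalues of norm-continuous families of compact operators, after first arranging a suitable trivialization. First I would invoke Proposition~\ref{proposition:e-is-gamma0-x-p} to identify $E_A$ with $\Gamma_0(X,P)\subset C_0(X,H)$ for a \strongly{} continuous projection $P$, and identify $T$ with an element of $\cbstr(X,B(H))$ satisfying $PT=TP=T$. The eigenvalues $\lambda_k(x)$ of $\chi_*T$ are then exactly the eigenvalues (counted with multiplicity, in decreasing order) of the positive compact operator $T(x)\in B(H)$. The point is that a priori $x\mapsto T(x)$ is only \strongly{} continuous, which is not enough to conclude continuity of eigenvalues, so the first real step is to upgrade regularity. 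Here I would use that $T$ is compact in the Hilbert-module sense (it lies in $\cptmp{E_A}$), hence a norm limit of finite-rank operators $\ket{v}\bra{w}$ with $v,w\in C_0(X,H)$; each such finite-rank operator is visibly norm-continuous as a map $X\to B(H)$ (since $x\mapsto\norm{\ket{v(x)}\bra{w(x)}}=\norm{v(x)}\norm{w(x)}$ and the analogous estimates for differences), and the norm limit of norm-continuous $B(H)$-valued maps is norm-continuous. Thus $x\mapsto T(x)$ is in fact a norm-continuous family of positive compact operators.

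Next I would apply the classical perturbation-theoretic fact that, for a norm-continuous family of self-adjoint compact operators, each individual eigenvalue $\lambda_k$, listed in decreasing order with multiplicity, depends continuously on the parameter — this is the reference \cite[IV.3.5]{MR1335452} already cited in the proof of Corollary~\ref{corollary:lp-order-ideal-in-compacts}, and follows from the min-max characterization
\[
\lambda_k(x)=\min_{\dim W=k-1}\ \max_{\substack{\xi\in W^\perp\\ \norm{\xi}=1}}\ \ip{\xi,T(x)\xi},
\]
together with $|\ip{\xi,T(x)\xi}-\ip{\xi,T(y)\xi}|\leq\norm{T(x)-T(y)}$. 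So each $x\mapsto\lambda_k(x)$ is a continuous real-valued function on $X$.

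It remains to check that $\lambda_k$ vanishes at infinity, so that it lies in $A\cong C_0(X)$ rather than merely $C_b(X)$. For this I would note $0\le\lambda_k(x)\le\lambda_0(x)=\norm{T(x)}=\norm{\chi_*T}$, and $x\mapsto\norm{\chi_*T}$ lies in $C_0(X)$ by Proposition~\ref{proposition:c0-x-y-is-banach} (applied to $Tv_0$ for suitable unit $v_0$, exactly as in the norm estimate in the proof of Proposition~\ref{prop:endo-free}); since a continuous function dominated in absolute value by a function in $C_0(X)$ is itself in $C_0(X)$, we conclude $\lambda_k\in A$. I expect the main obstacle to be the first step — establishing genuine norm-continuity of $x\mapsto T(x)$ from compactness in the Hilbert-module sense — since \strongly{} continuity alone is insufficient and one must genuinely exploit the finite-rank approximation; the eigenvalue-continuity and vanishing-at-infinity steps are then routine.
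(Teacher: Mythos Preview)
Your argument is correct. The paper itself does not give a proof here: it simply cites \cite[Theorem 6.4.2]{MR2125398}. Your approach---reduce to the standard module via Proposition~\ref{proposition:e-is-gamma0-x-p}, upgrade \strongly{} continuity to norm continuity using that Hilbert-module compacts are uniform limits of pointwise rank-one operators $x\mapsto\ket{v(x)}\bra{w(x)}$ with $v,w\in C_0(X,H)$, then invoke the min-max principle (as in \cite[IV.3.5]{MR1335452}) for continuity of the ordered eigenvalues---is exactly the natural self-contained route, and it works.

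One small cleanup: your justification for vanishing at infinity is slightly garbled. The parenthetical about applying Proposition~\ref{proposition:c0-x-y-is-banach} ``to $Tv_0$ for suitable unit $v_0$'' does not quite do the job. The clean way is to observe that your finite-rank approximation already shows more than norm continuity: each $\ket{v}\bra{w}$ with $v,w\in C_0(X,H)$ satisfies $\norm{\ket{v(x)}\bra{w(x)}}=\norm{v(x)}\norm{w(x)}\to 0$ at infinity, so the approximants lie in the Banach space $C_0(X,B(H))$, hence so does $T$. Then Proposition~\ref{proposition:c0-x-y-is-banach} applied directly to $T$ (with $Y=B(H)$) gives $x\mapsto\norm{T(x)}\in C_0(X)$, and your domination argument $0\le\lambda_k\le\lambda_0=\norm{T(\cdot)}$ finishes it.
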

\begin{proof}
A proof can be found in \cite[Theorem 6.4.2]{MR2125398} (see also {\em op.cit.} Section 6.6). 
  \end{proof}

\begin{remark}
Note that it is important to know that both $T$ is pointwise compact and that $T$ is norm continuous. Dropping the latter assumption is fatal (see e.g. Example~\ref{example:locally-nontrivial-bundle-projection}).

By self-adjointness and norm continuity, the full spectrum of $T$ is continuous in norm topology, cf. \cite[Remark IV.3.3]{MR1335452}.
However, the spectral projections (or even the eigenvectors) can in general \emph{not} be continuously extended over any open neighbourhood, even in the case where the module is finitely generated: see e.g. \cite{MR765586}. \emph{A fortiori}, continuous diagonalizability (`diagonability') is entirely out of the question in general.
\end{remark}

The classical treatment of zeta functions of operators on Hilbert spaces is in terms of the Dirichlet series $\tr T^z = \sum_k \lambda_k^z$.
The Jensen--Cahen theorem (see e.g. \cite{MR0185094}) shows that these series converge uniformly on angular regions contained in the defining half-plane, so that the limit is in fact holomorphic.
The theorem translates very well to the Hilbert module setting.

\begin{lemma}
  \label{lemma:continuous-jensen-cahen}
  Suppose that $0 \leq T \leq 1 \in \sch{p}{E_A}$. 
  For $0 < \alpha  < \pi/2$, denote the angular region $\{z \in \C \mid |\operatorname{Arg}(z - p)| \leq \alpha\}$ by $C_\alpha$.
  Then, for all $\epsilon > 0$ there exists $m_0 \geq 0$ such that,
  for all $n \geq m \geq n_0$ and all $z \in C_\alpha$,
  \[
    \norm{
      \sum_{k=m}^n \lambda_k^{z}
    }_A
    < \epsilon.
  \]
\end{lemma}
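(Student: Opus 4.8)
The plan is to adapt the classical Jensen--Cahen argument for (generalized) Dirichlet series to the $C^*$-algebraic setting, carrying the supremum over $\gfdual{A}$ along with the uniformity over the angular region $C_\alpha$. Throughout, each eigenvalue $\lambda_k$ is an element of $A$ by Lemma~\ref{lemma:eigenvalues-individually-continuous}, and for $\Re z > 0$ the function $\lambda_k^z \colon x \mapsto \lambda_k(x)^z$ (with the convention $0^z = 0$) again lies in $A = C_0(X)$, so that the partial sums $\sum_{k=m}^n \lambda_k^z$ are elements of $A$ and $\norm{\cdot}_A$ makes sense on them. At $z = p$ the series is $\sum_k \lambda_k^p$, which converges (next step), so there is nothing to prove there; on $C_\alpha \setminus \{p\}$ one has $\Re(z - p) = |z - p|\cos(\operatorname{Arg}(z - p)) \geq |z - p|\cos\alpha > 0$ since $\alpha < \pi/2$, so we may assume $\Re z > p$ throughout.

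The starting point of the argument is that $\sum_k \lambda_k^p$ converges in the norm of $A$. Its partial sums $S_n \eqdef \sum_{k=1}^n \lambda_k^p$ form an increasing sequence of positive elements of $A$ whose pointwise limit is $\chi \mapsto \tr |\chi_* T|^p = \sum_k \lambda_k(x)^p$; this function belongs to $A$ precisely because $T \in \sch{p}{E_A}$ (Definition~\ref{definition:minimal-definition-of-schatten-class}), so by Dini's theorem in the $C^*$-algebraic form recalled after Proposition~\ref{proposition:t-in-lp-iff-tabs-in-lp-iff-tp-in-l1} the convergence is in norm. Setting $B_{m,k} \eqdef \sum_{j=m}^k \lambda_j^p = S_k - S_{m-1}$, norm convergence of $(S_n)$ then means: for every $\delta > 0$ there is $m_0$ with $\norm{B_{m,k}}_A < \delta$ for all $k \geq m \geq m_0$.

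Next comes summation by parts. Writing $z = p + w$ with $\Re w > 0$ and factoring $\lambda_k^z = \lambda_k^p \cdot \lambda_k^w$, Abel summation against the $B_{m,k}$ gives, for $n \geq m$,
\[
  \sum_{k=m}^n \lambda_k^z \;=\; B_{m,n}\,\lambda_n^w \;+\; \sum_{k=m}^{n-1} B_{m,k}\,\bigl(\lambda_k^w - \lambda_{k+1}^w\bigr).
\]
Two pointwise estimates, uniform in $x \in X$, now control the right-hand side: first, $|\lambda_n(x)^w| = \lambda_n(x)^{\Re w} \leq 1$ since $0 \leq \lambda_n \leq 1$ and $\Re w > 0$, so $\norm{\lambda_n^w}_A \leq 1$; second, since $k \mapsto \lambda_k(x)$ is decreasing with values in $[0,1]$, the segments $[\lambda_{k+1}(x), \lambda_k(x)]$ are consecutive subintervals of $[0,1]$, whence
\[
  \sum_{k \geq m} \bigl| \lambda_k(x)^w - \lambda_{k+1}(x)^w \bigr| \;\leq\; |w| \int_0^1 s^{\Re w - 1}\, ds \;=\; \frac{|w|}{\Re w} \;\leq\; \sec\alpha
\]
for all $x$ and all $w$ with $|\operatorname{Arg}(w)| \leq \alpha$. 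Taking $\norm{\cdot}_A$ of the Abel identity, pulling $\sup_k \norm{B_{m,k}}_A$ to the front, and inserting the two displayed bounds yields $\norm{\sum_{k=m}^n \lambda_k^z}_A \leq \delta\,(1 + \sec\alpha)$ for all $n \geq m \geq m_0$ and all $z \in C_\alpha$; choosing $\delta = \epsilon/(1 + \sec\alpha)$ at the outset gives the lemma.

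I do not expect a genuine obstacle. The one point requiring care --- and the reason for the order above --- is to keep the uniformity over $x \in \gfdual{A}$ intact through the summation by parts: this works because the bound on $B_{m,k}$ is taken in the $C^*$-norm (a supremum over $x$) and, crucially, because the pointwise monotonicity of $k \mapsto \lambda_k(x)$ makes the telescoping estimate for $\sum_k |\lambda_k(x)^w - \lambda_{k+1}(x)^w|$ hold with the single constant $\sec\alpha$, independent of $x$. That monotonicity is precisely what Lemma~\ref{lemma:eigenvalues-individually-continuous} together with the decreasing ordering provides; the rest is a routine transcription of the scalar argument.
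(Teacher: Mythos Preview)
Your proof is correct and follows essentially the same route as the paper's: both use Abel summation applied to the factorization $\lambda_k^z = \lambda_k^p \cdot \lambda_k^{z-p}$, invoke Dini's theorem to obtain norm-Cauchyness of the partial sums $B_{m,k} = \sum_{j=m}^k \lambda_j^p$, and bound the remaining differences by a constant of the form $\sec\alpha$ uniformly in $x$. The only cosmetic difference is that you derive the total-variation bound $\sum_k |\lambda_k(x)^w - \lambda_{k+1}(x)^w| \leq |w|/\Re w$ directly via the integral $\int_0^1 s^{\Re w - 1}\,ds$, whereas the paper quotes the corresponding inequality from Hardy--Riesz; the content is the same.
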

\begin{proof}
  The proof is based on \cite[Theorem 2]{MR0185094}. 
  Consider the series $\sum_{k=m}^n \lambda_k^p \lambda_k^{z-p}$.
  Write $A(p, q) \eqdef \sum_{k=p}^q \lambda_k^p$ and $\Del_{z,k} \eqdef \lambda_{k+1}^{z} - \lambda_k^{z}$.
  Then, by Abel's lemma on partial summation \cite{MR1577619}, we have
  \[
    \sum_{k=m}^n \lambda_k^z = \sum_{k=m}^{n-1} A(m, k) \Del_{z,k} + A(m, n) \lambda_n^{z}
  \]
  Now, because the series $\sum_{k=0}^\infty \lambda_k(x)^p$ converges pointwise to $\tr T^p$, Dini's theorem shows that $\sum_{k=0}^\infty \lambda_k^p$ converges in norm.
  In particular, for all $\epsilon > 0$ there exists $m_0$ such that $\norm{A(m, q)} < \epsilon \cos \alpha$ for all $q \geq m \geq m_0$.

  By \cite[Lemma 2]{MR0185094}, we have $\Del_{z,k} \leq |z| / p \Del_{p, k}$.
  Thus, as $|z|/p \leq \operatorname{sec} \alpha$ throughout $C_\alpha$, we have
  \[
    \norm{ \sum_{k=m}^n \lambda_k^z } < \epsilon \left( \sum_{k=m}^{n-1} \Del_{p,k} + \lambda_n^p \right) = \epsilon \lambda_n^p < \epsilon \norm{T^p}.
    \]
\end{proof}

As in the classical case, the Jensen--Cahen theorem paves the way for a \emph{holomorphic} zeta function.

\begin{theorem}
  Let $0 \leq T \leq 1 \in \sch{p}{E_A}$.
  Then the map $z \mapsto \zeta(z, T)$ is holomorphic on the half-plane $\C_{\Re z > p} = \{z \mid \Re z > p\}$.
  Moreover, for all compact subsets $K \subset \C_{\Re z > p}$, all $x \in X$ and all $\epsilon > 0$, there is a neighbourhood $U$ of $x$ on which
  \[
    \sup_{z \in K} | \zeta(z,T)(y) - \zeta(z,T)(x) | < \epsilon
  \]
  for all $y \in U$.
\end{theorem}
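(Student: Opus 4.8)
The plan is to reduce everything to the Dirichlet series $\sum_k \lambda_k^z$ and feed it into the continuous Jensen--Cahen estimate of Lemma~\ref{lemma:continuous-jensen-cahen}. First I would check that $\zeta(z,T)=\tr T^z$ really is a well-defined element of $A$ on the relevant half-plane: since $0\le T\le 1$ and $\Re z>p$, the continuous functional calculus gives $T^z=T^{z-p}T^p$ with $\norm{T^{z-p}}\le 1$ and $T^p\in\sch{1}{E_A}$ by Proposition~\ref{proposition:t-in-lp-iff-tabs-in-lp-iff-tp-in-l1}, so $T^z\in\sch{1}{E_A}$ by the ideal property of Theorem~\ref{theorem:schp-is-ideal-and-norm-has-properties}. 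Because $\chi_*$ is a $\ast$-homomorphism (hence intertwines functional calculus) and $\tr\chi_*=\chi\circ\tr$ on $\sch{1}{E_A}$ by Corollary~\ref{corollary:trace-is-pointwise}, one has $\chi(\zeta(z,T))=\tr(\chi_*T)^z=\sum_k\lambda_k(x)^z$, where the $\lambda_k\in A$ are the eigenvalue functions of Lemma~\ref{lemma:eigenvalues-individually-continuous} and $x$ is the point dual to $\chi$. Lemma~\ref{lemma:continuous-jensen-cahen} then says the partial sums $S_N(z)\eqdef\sum_{k=1}^N\lambda_k^z\in A$ form a Cauchy sequence in $A$, uniformly for $z$ in each angular region $C_\alpha$ with $\alpha<\pi/2$; since $A$ is complete and its characters separate points, the $A$-limit of $S_N(z)$ must equal $\zeta(z,T)$, and $S_N\to\zeta(\,\cdot\,,T)$ uniformly on each $C_\alpha$.

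Next I would prove holomorphy. The building block is the claim that for fixed $\lambda\in A$ with $0\le\lambda\le 1$, the map $z\mapsto\lambda^z$ is holomorphic from $\{\Re z>0\}$ to $A$ with derivative $\lambda^z\ln\lambda$ (which again lies in $A$, as $t^{\Re z}\,\lvert\ln t\rvert\to 0$ for $t\to 0^+$). This follows from the second-order Taylor estimate $\lvert t^z-t^{z_0}-(z-z_0)t^{z_0}\ln t\rvert\le\tfrac12\lvert z-z_0\rvert^2\sup_{0<t\le 1}t^{\delta}(\ln t)^2$, valid for $z$ in a small disc around $z_0$ on which $\Re z\ge\delta>0$, the supremum being finite and the estimate uniform in $t=\lambda(x)\in[0,1]$ (the case $\lambda(x)=0$ being trivial). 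Hence each $S_N$ is holomorphic on $\{\Re z>p\}$. Any compact $K\subset\{\Re z>p\}$ lies in the interior of some $C_\alpha$ with $\alpha<\pi/2$, because $\inf_{z\in K}\Re z>p$ and $K$ is bounded force $\sup_{z\in K}\lvert\operatorname{Arg}(z-p)\rvert<\pi/2$; thus the convergence $S_N\to\zeta(\,\cdot\,,T)$ is locally uniform on $\{\Re z>p\}$, and the Banach-space-valued Weierstrass theorem (locally uniform limits of holomorphic functions are holomorphic, cf.\ \cite{MR1157815}) shows that $\zeta(\,\cdot\,,T)$ is holomorphic there.

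Finally, for the equicontinuity statement I would exploit that holomorphy makes $z\mapsto\zeta(z,T)$ continuous into $A=C_0(X)$ with its sup-norm. Given compact $K\subset\{\Re z>p\}$, cover it by finitely many discs $D_1,\dots,D_m$ centred at points $z_j\in K$ such that $\norm{\zeta(z,T)-\zeta(z_j,T)}_A<\epsilon/3$ for $z\in D_j$. Each of the finitely many functions $\zeta(z_j,T)\in C_0(X)$ is continuous at $x$, so there is a neighbourhood $U$ of $x$ with $\lvert\zeta(z_j,T)(y)-\zeta(z_j,T)(x)\rvert<\epsilon/3$ for all $j$ and all $y\in U$; a three-$\epsilon$ estimate (for $z\in K$ pick $j$ with $z\in D_j$) then gives $\sup_{z\in K}\lvert\zeta(z,T)(y)-\zeta(z,T)(x)\rvert<\epsilon$ for $y\in U$. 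Alternatively one can argue directly from Jensen--Cahen by first replacing $\zeta(\,\cdot\,,T)$ by $S_N$ up to $\epsilon/3$ uniformly on $C_\alpha\supset K$, and then using continuity at $x$ of the finitely many $\lambda_k$ (treating the cases $\lambda_k(x)>0$ and $\lambda_k(x)=0$ separately, the latter via $\lvert\lambda_k(y)^z\rvert\le\lambda_k(y)^{\delta}$). I expect the one genuinely technical point to be the uniform-in-$x$ holomorphy of the building blocks $\lambda_k^z$, i.e.\ controlling the Taylor remainder uniformly over the base space and in particular near the zero set of $\lambda_k$; everything else is assembly of results already established.
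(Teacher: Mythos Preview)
Your proof is correct and follows the same overall strategy as the paper: identify $\zeta(z,T)$ with the Dirichlet series $\sum_k \lambda_k^z$, use the Jensen--Cahen lemma to get uniform convergence of the partial sums $S_N$ on angular regions, and pass holomorphy and continuity from $S_N$ to the limit. The technical details differ in two places, and in both your route is a little cleaner. For holomorphy of the building blocks $z\mapsto\lambda_k^z$, you argue directly by a uniform second-order Taylor remainder estimate to obtain norm-differentiability; the paper instead tests against bounded linear functionals on $A$, invokes the Riesz representation theorem and Fubini to verify Morera's criterion weakly, and then cites the equivalence of weak and strong holomorphy. Your argument is more self-contained and avoids the measure-theoretic detour. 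For the equicontinuity in $x$, you exploit that holomorphy already gives continuity of $z\mapsto\zeta(z,T)$ into $(A,\norm{\cdot}_\infty)$, cover $K$ by finitely many small discs, and run a three-$\epsilon$ argument; the paper instead uses Jensen--Cahen to reduce to a finite partial sum $\zeta_m$ and then carries out an explicit pointwise estimate of $|\lambda_k(x)^z-\lambda_k(y)^z|$ via continuity of $\ln\lambda_k$ on a neighbourhood where $\lambda_k>0$. Your abstract argument is shorter and sidesteps the case-splitting near the zero set of $\lambda_k$ that the paper's direct estimate treats only implicitly.
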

\begin{proof}
 For the first statement we consider the $A$-valued function $\zeta_n(\cdot, T) \colon z \mapsto \sum_{k=0}^n \lambda_k^z$ on $\C_{\Re z > p}$. Recall that all bounded functionals $A \to \C$ decompose in four positive linear functionals.
By the Riesz representation theorem, all such positive linear functionals are given by positive, finite, regular Borel measures $\mu$ on $\gfdual{A}$ under the identification $\phi_\mu(f) \eqdef \int f d \mu$.
In particular, we have $\phi_\mu(\lambda_k^z) = \int \lambda_k(x)^z d \mu(x)$.
Now, if $C$ is a contour around $z_0$ in $\C_{\Re z > p}$, the contour integral $\oint_C \lambda_k(x)^z$ vanishes for all $x \in X$.
Note that $|\lambda_k^z| = \lambda_k^{\Re z}$ and so, by Fubini's theorem, $\oint_C \int \lambda_k(x)^z d \mu (x)= \int \left( \oint_C \lambda_k(x)^z \right) d \mu(x) = 0$. 
By Morera's theorem, we conclude that $\phi_\mu(\zeta_n)$ is holomorphic on $\C_{\Re z > p}$.
Moreover, as $\zeta_n(\cdot, T) \to \zeta(\cdot, T)$ uniformly on compact subsets of $\C_{\Re z > p}$ by Lemma~\ref{lemma:continuous-jensen-cahen}, the function $\phi_\mu(\zeta(\cdot, T))$ is holomorphic on $\C_{\Re z > p}$ as well.
By \cite[Theorem 3.31]{MR1157815}, we may conclude that $\zeta  (\cdot, T)$ is holomorphic, as an $A$-valued function, on the half-plane $\Re z > p$.

For the second statement, note that by Lemma~\ref{lemma:continuous-jensen-cahen}, for all $\epsilon > 0$ there is, for all compact subsets $K \subset \C_{\Re z > p}$, some $m_0$ with $\norm{\zeta_{m}(z, T) - \zeta(z, T)}_A < \epsilon$ for all $z \in K$ and all $m \geq m_0$.

Assume without loss of generality that $\lambda_k(x) > 0$ and pick a neighbourhood $U$ of $x$ on which $\lambda_k(y) > 0$, for all $k=1,\ldots, m$. For any $\epsilon_0>0$ let $V \subset U$ be such that $| \ln \lambda_k(x) - \ln \lambda_k(y) | < \epsilon_0$ for all $x, y \in V$.
Then, $\lambda_k(y)^z = e^{z \ln \lambda_k(y)}$ for all $z \in K$, so that
$| \lambda_k(x)^z - \lambda_k(y)^z | \leq | 1 - e^{zs} | |\lambda_k^{p}(x)|$ for some $s \in \C$ with $|s| < \epsilon_0$.
Moreover, $| 1 - e^{zs} | \leq | 1 - e^{\epsilon_1} | \leq \epsilon_1 e^{\epsilon_1}$, where $\epsilon_1 = \epsilon_0  \sup_{z \in K} |z|$.

If we now pick $\epsilon_0$ such that $\epsilon_1 e^{\epsilon_1} 
|\lambda_k^p(x)| < \epsilon/m$, we conclude that for all $y \in V$ and all $z \in K$ we have $| \lambda_k(x)^z - \lambda_k(y)^z | < \epsilon/m$. Consequently, we find that $\| \zeta_m(\cdot, T)(x) - \zeta_m(\cdot, T)(y) \| < \epsilon$ as desired. 
  \end{proof}

\begin{remark}
  It would be desirable to extend the previous Lemma and Theorem to the functions $\tr a T^{z}$, for $a \in \lnmp{E_A}$.
  However, since the spectral projections of $T$ are in general not even weakly continuous, this would be a nontrivial extension.
  We expect that a possibility for such an extension would be to investigate the functions $x \mapsto \tr p_k(x) a(x) p_k(x) / \operatorname{rank}(p_k(x))$, where $p_k(x)$ is the spectral projection on the eigenspace belonging to the eigenvalue $\lambda_k(x)$ of $T(x)$, and then use these expressions as coefficients in the continuous Dirichlet series.
\end{remark}

The next step in the classical case would be to show that certain operators have zeta functions that can be continued meromorphically to all of $\C$, with a discrete set of poles. The residues at these poles then yields interesting information about the operator $T$. For instance, if $T$ is the bounded transform $(1 + D^2)^{-1/2}$ of a pseudodifferential operator these residues give geometric information about the pertinent background manifold ({\em cf.} \cite{MR2273508} for more details). 
For this reason, it would be very desirable to have a reasonable criterion under which our zeta function of operators on Hilbert \cstar-modules can be continued meromorphically to all of $\C$, but further research in that direction is beyond the scope of the present work.


\subsection{Summability of unbounded Kasparov cycles}
\label{sec:summ-some-unbo}
We now apply the just-developed theory of Schatten classes to arrive at a notion of summability for unbounded Kasparov cycles over a commutative $C^*$-algebra. This notion is supposed to generalize summability for spectral triples (as unbounded Kasparov cycles over $\C$) \cite[Section IV.2]{MR1303779} ({\em cf.} \cite[Definition 10.8]{MR1789831}) . 
We refer to \cite{MR715325,MR3213549,MR3107519,MR3545223} for all relevant notions of unbounded Kasparov cycles, external and internal Kasparov product, and to \cite{MR3873573,suijlekom2019immersions} for the specific application to Riemannian submersions and immersions to be discussed below. 

In order to set the notation, for $A,B$ two $C^*$-algebras we let $({}_A E_B,S)$ be an {\em unbounded Kasparov $A-B$ cycle}, consisting of
  \begin{itemize}
  \item $E_B$ is a (graded) Hilbert $B$-module
  \item $A$ is represented on $E_B$ by adjointable (even) operators.
  \item $S$ is a regular, self-adjoint (and odd) operator, densely defined on $\dom(S) \subset E_A$.
  \item $a(1+S^2)^{- 1}$ is in $\cptmp{E_B}$ for all $a \in A$.
  \item There exists a dense subalgebra $\mathcal{A} \subset A$ that preserves $\dom(S)$ and is such that for all $a \in \mathcal{A}$, the commutator $[S,a]$ extends to an adjointable operator on $E_B$.
  \end{itemize}

\begin{definition}
  Let $B$ be a unital, commutative \cstar-algebra and let $({}_A E_B, S)$ be an unbounded Kasparov $A-B$ cycle.
  We say that $({}_A E_B, S)$ is {\em $p$-summable} if
  \[
    (1 + S^2)^{-\frac12} \in \sch{q}{E_B}.
  \]

\end{definition}

For simplicity, we restrict to the case where $A$ is unital.
See \cite[Section 2]{MR3221983} for an understanding of the nonunital case.

\begin{example}
  If $(A, H, D)$ is a $p$-summable spectral triple ({\em cf.} \cite[Definition 10.8]{MR1789831}), then it is an unbounded $p$-summable Kasparov $(A, \C)$ cycle.
\end{example}
By the very definition of the Schatten classes, the localization of an unbounded $p$-summable Kasparov $(A, B)$-cycle along a character of $B$ yields a $p$-summable spectral triple. In the other direction, it is not true that if all such localized spectral triples are $p$-summable, then the original unbounded KK-cycle is $p$-summable: one needs the fiberwise $\mathcal L^p$-norms of the resolvents to be continuous over the base.

\subsubsection{Summability and the external Kasparov product}
One of the key results in \cite{MR715325} was an explicit and linear formula for the external Kasparov product. More precisely, they showed that two unbounded Kasparov cycles (restricting to the even-odd case for simplicity) $(E_{B},\gamma,S)$ and $(F_{C},T)$ can be combined into an {\em external product} unbounded KK-cycle over the minimal tensor product $B \otimes \C$:
$$
( (E \otimes F)_{B \otimes C} , S \otimes 1 + \gamma \otimes T ).
$$
For spectral triples this can be understood as the direct product of the corresponding (noncommutative) spaces. In any case, it is desirable to have an additive property of summability for this external product in the case of a commutative base. 

\begin{lemma} \label{lemma:bound-on-powers-of-commutating-resolvents}
  If $a, b$ are positive, (resolvent) commuting, regular operators on a Hilbert \cstar-module over a commutative $C^*$-algebra, then for $p, q > 0$ one has
  \[
    (1 + a + b)^{-p-q} \leq (1 + a)^{-p/2} (1 + b)^{-q} (1 + a)^{-p/2}
  \]
\end{lemma}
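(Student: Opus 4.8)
The plan is to reduce the inequality to a statement about commuting positive operators that can be handled by the continuous functional calculus together with a standard operator-monotonicity argument. Since $a$ and $b$ are positive, regular, and resolvent-commuting on a Hilbert module over a commutative $C^*$-algebra, I would first invoke the joint functional calculus: resolvent-commuting regular self-adjoint operators generate a commutative algebra to which a joint continuous (and bounded Borel) functional calculus applies, so I may treat $a$ and $b$ as multiplication operators by non-negative, possibly unbounded, functions $\alpha, \beta$ on a joint spectrum, fiberwise over $\widehat{A}$. The claimed operator inequality then becomes the pointwise scalar inequality
\[
(1 + \alpha + \beta)^{-p-q} \leq (1 + \alpha)^{-p} (1 + \beta)^{-q},
\]
valid for all $\alpha, \beta \geq 0$, and this is elementary: since $1 + \alpha + \beta \geq 1 + \alpha$ and $1 + \alpha + \beta \geq 1 + \beta$ (both in $[1,\infty)$), raising to the negative powers $-p$ and $-q$ (which reverses the inequalities) gives $(1+\alpha+\beta)^{-p} \leq (1+\alpha)^{-p}$ and $(1+\alpha+\beta)^{-q} \leq (1+\beta)^{-q}$; multiplying these two non-negative scalar inequalities yields the claim.

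Next I would translate the symmetric-product form on the right-hand side. Because $a$ and $b$ resolvent-commute, all of $(1+a)^{-p/2}$, $(1+b)^{-q}$, and $(1+a)^{-p/2}$ mutually commute as bounded adjointable operators, so the product $(1+a)^{-p/2}(1+b)^{-q}(1+a)^{-p/2}$ equals $(1+a)^{-p}(1+b)^{-q}$ and is a positive element of $\lnmp{E_A}$; the symmetric bracketing in the statement is just a manifestly-positive way of writing it. Having reduced to bounded commuting positive operators, the pointwise scalar inequality above lifts to the operator inequality by the order-preserving property of the (joint) continuous functional calculus: if $f \leq g$ as continuous functions on the joint spectrum, then $f(a,b) \leq g(a,b)$ in $\lnmp{E_A}$. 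This gives exactly $(1+a+b)^{-p-q} \leq (1+a)^{-p}(1+b)^{-q} = (1+a)^{-p/2}(1+b)^{-q}(1+a)^{-p/2}$.

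The main obstacle is making the joint functional calculus for \emph{unbounded} resolvent-commuting regular operators on a Hilbert module rigorous, rather than just scalar-valued pointwise reasoning. The cleanest route is to avoid the unbounded calculus altogether: set $r \eqdef (1+a)^{-1}$ and $s \eqdef (1+b)^{-1}$, which are commuting positive contractions in $\lnmp{E_A}$, and observe that $(1+a+b)^{-1} = r(r + s - rs)^{-1}s$ — or, more simply, note that $1 + a + b = (1+a) + b \geq 1+a$ and $\geq 1+b$ in the sense of the (bounded) functional calculus applied after passing to resolvents, so that $(1+a+b)^{-1} \leq (1+a)^{-1}$ and $(1+a+b)^{-1} \leq (1+b)^{-1}$. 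From commuting positive contractions $u \leq v$ and $u \leq w$ one gets $u^{p+q} = u^p u^q \leq v^p w^q$ by applying the order-preserving functional calculus to each factor and then multiplying, using that multiplication of commuting positive operators preserves order. Setting $u = (1+a+b)^{-1}$, $v = (1+a)^{-1}$, $w = (1+b)^{-1}$ finishes the proof. The only delicate point is justifying $(1+a+b)^{-1} \leq (1+a)^{-1}$ for unbounded $a, b$; this follows because $(1+a)^{1/2}(1+a+b)^{-1}(1+a)^{1/2}$ is a positive contraction (as $1+a \leq 1+a+b$ on the common core $\dom(a)\cap\dom(b)$), hence $(1+a+b)^{-1} \leq (1+a)^{-1}$ after conjugating back by $(1+a)^{-1/2}$, all of which makes sense because the relevant bounded resolvents commute.
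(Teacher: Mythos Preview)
Your proposal is correct and takes essentially the same approach as the paper: both arguments pass to the commutative \cstar-algebra generated by the resolvents of $a$ and $b$, use the elementary inequalities $(1+a+b)^{-1}\leq(1+a)^{-1}$ and $(1+a+b)^{-1}\leq(1+b)^{-1}$, and then invoke order preservation of the functional calculus in that commutative algebra to obtain the stated bound. The paper's proof is a terse two-line version of what you wrote; your additional care about the symmetric bracketing and about justifying the resolvent inequalities in the unbounded setting simply spells out what the paper leaves implicit.
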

\begin{proof}
  By positivity of $a, b$, we have $(b + 1)^{-1} \geq (a + b + 1)^{-1} \leq (a + 1)^{-1}$, and by commutativity of the \cstar-algebra generated by the resolvents of $a, b$, we have $(a + b + 1)^{-p-q} \leq (a + 1)^{-p/2} (a + b + 1)^{-q} (a + 1)^{-p/2} \leq (a + 1)^{-p/2} (b + 1)^{-q} (a + 1)^{-p/2}$.
\end{proof}

\begin{corollary}
  The summability of unbounded Kasparov modules (over commutative $C^*$-algebras) is additive under the \emph{exterior} product.
\end{corollary}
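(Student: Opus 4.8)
The plan is to reduce everything to the operator inequality of Lemma~\ref{lemma:bound-on-powers-of-commutating-resolvents}, combined with the order-ideal property of the trace class from Corollary~\ref{corolllary:s-in-Lp-if-sp-leq-tp-in-Lp}. Let $({}_{A}E_B,\gamma,S)$ be $p$-summable and $({}_{A'}F_C,T)$ be $q$-summable, with $B$ and $C$ commutative, and write $D \eqdef S\otimes 1 + \gamma\otimes T$ for the operator of the exterior product cycle over $B\otimes C$, which is again commutative. Since $S$ is odd and $\gamma^2 = 1$, the two cross terms in $D^2$ cancel, so that $D^2 = S^2\otimes 1 + 1\otimes T^2$ on a common core. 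I would then apply Lemma~\ref{lemma:bound-on-powers-of-commutating-resolvents} to $a \eqdef S^2\otimes 1$ and $b \eqdef 1\otimes T^2$ --- which are positive, regular and resolvent-commuting over the commutative base $B\otimes C$ --- with exponents $p/2$ and $q/2$. Using that $(1+a)$ and $(1+b)$ commute, this yields
\[
  (1 + D^2)^{-(p+q)/2} \;\leq\; (1+a)^{-p/2}(1+b)^{-q/2} \;=\; (1+S^2)^{-p/2}\otimes(1+T^2)^{-q/2}
\]
as an inequality of positive operators in $\lnmp{(E\otimes F)_{B\otimes C}}$.

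Next I would check that the right-hand side is trace class. By $p$-summability and Proposition~\ref{proposition:t-in-lp-iff-tabs-in-lp-iff-tp-in-l1} one has $(1+S^2)^{-p/2} = |(1+S^2)^{-1/2}|^{p}\in\sch{1}{E_B}$, and likewise $(1+T^2)^{-q/2}\in\sch{1}{F_C}$. For the exterior tensor product of two trace-class operators, localize along a character $\chi\otimes\psi$ of $B\otimes C\cong C_0(\gfdual B\times\gfdual C)$: the localization is $\chi_*(1+S^2)^{-p/2}\otimes\psi_*(1+T^2)^{-q/2}$, whose Hilbert-space trace factorizes as $\tr\chi_*(1+S^2)^{-p/2}\cdot\tr\psi_*(1+T^2)^{-q/2}$. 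Hence the fiberwise trace of $(1+S^2)^{-p/2}\otimes(1+T^2)^{-q/2}$ is the elementary tensor $\tr(1+S^2)^{-p/2}\otimes\tr(1+T^2)^{-q/2}$ of elements of $B$ and $C$, which lies in $B\otimes C$; by Definition~\ref{definition:minimal-definition-of-schatten-class} the exterior product operator is in $\sch{1}{E\otimes F}$.

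Putting the two steps together, $0\leq (1+D^2)^{-(p+q)/2}\leq (1+S^2)^{-p/2}\otimes(1+T^2)^{-q/2}$ with the latter trace class, so Corollary~\ref{corolllary:s-in-Lp-if-sp-leq-tp-in-Lp} (in the case of exponent $1$) gives $(1+D^2)^{-(p+q)/2}\in\sch{1}{E\otimes F}$. By Proposition~\ref{proposition:t-in-lp-iff-tabs-in-lp-iff-tp-in-l1} this is equivalent to $(1+D^2)^{-1/2}\in\sch{p+q}{E\otimes F}$, i.e. the exterior product cycle is $(p+q)$-summable.

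The routine points that deserve care are the identity $D^2 = S^2\otimes 1 + 1\otimes T^2$ on a common core, the regularity and resolvent-commutativity of $a$ and $b$ (standard facts in unbounded $KK$-theory, cf. the references cited for the exterior product), and the bookkeeping that the fiberwise trace of an exterior tensor product is the tensor of the fiberwise traces. The only genuinely structural input, and the step I would flag as the crux, is that here we must use the \emph{weak} order-ideal property of Corollary~\ref{corolllary:s-in-Lp-if-sp-leq-tp-in-Lp} rather than the sharper Hilbert-space monotonicity of Lemma~\ref{lemma:schatten-norm-monotone}; fortunately $p=1$ is precisely the case where no additional continuity hypothesis on the dominated operator is needed, so the argument goes through unobstructed.
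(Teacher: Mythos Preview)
Your proof is correct and follows essentially the same route as the paper: both use the anticommutation of $S\otimes 1$ and $\gamma\otimes T$ to obtain $D^2 = S^2\otimes 1 + 1\otimes T^2$, apply Lemma~\ref{lemma:bound-on-powers-of-commutating-resolvents} to bound $(1+D^2)^{-(p+q)/2}$ by the tensor product of resolvent powers, and then observe that the trace of the bound factorizes. The only cosmetic differences are that the paper computes the trace of the tensor product via the exterior product of frames whereas you localize at characters $\chi\otimes\psi$, and that you make explicit the appeals to Corollary~\ref{corolllary:s-in-Lp-if-sp-leq-tp-in-Lp} (at exponent $1$) and Proposition~\ref{proposition:t-in-lp-iff-tabs-in-lp-iff-tp-in-l1} that the paper leaves implicit; your version is in fact the more carefully written of the two.
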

\begin{proof}
  The corresponding selfadjoint operators $(S \otimes 1)$ and $(\gamma \otimes T)$ on $E \otimes F$ anticommute, and the actions of $B ,C$ commute.
  Thus, we have $|S \otimes 1 + \gamma \otimes T|^2 = |S \otimes 1|^2 + |\gamma \otimes T|^2$, which commute.
  Moreover, the exterior product $\{\fram_i \otimes \framt_j\}_{ij}$ of frames is a frame.
  We conclude, with the Lemma, that $|S \otimes 1 + 1 \otimes T + i|^{-p-q} \leq |S \otimes 1 + i|^{-p} |1 \otimes T + i|^{-q}$, and the $B \otimes C$-valued trace of the latter is just the tensor product of the traces of $|S + i|^{-p}$ and $|T + i|^{-q}$.
\end{proof}

Of course, the real challenge is to establish the compatibility of summability with the internal unbounded Kasparov product. Clearly, Lemma \ref{lemma:bound-on-powers-of-commutating-resolvents} is then not sufficient but we leave its (challenging) extension for future research. Instead, we limit ourselves to establishing such summability results for certain classes of geometric examples. When combined with \cite{MR3500818,MR3873573,vandendungenKasparovProductSubmersions2018,suijlekom2019immersions} where these examples were described in terms of unbounded Kasparov cycles, one may conclude the sought-for compatibility of summability with the internal product at least for these geometric examples.

\subsubsection{Example: Riemannian submersions}
In \cite{MR3873573} the factorization of the Dirac operator $D_Y$ on $Y$ in terms of a vertical operator $S$ and the Dirac operator $D_X$ on $X$ was studied for a Riemannian submersion $\pi \colon Y \to X$ of compact spin$^{c}$ manifolds (more general proper Riemannian submersions were considered in \cite{kaad2017factorization,vandendungenKasparovProductSubmersions2018,vandendungenLocalisationsHalfclosedModules2020}).

We let $L^2(\mathcal{S}_X)$ and $L^2(\mathcal{S}_Y)$ denote the Hilbert space completions of the spinor modules $\Gamma^\infty(\mathcal{S}_X)$ and $\Gamma^\infty(\mathcal{S}_Y)$, respectively. Based on a certain $C^\infty(Y)$-module of smooth sections of the vertical spinor bundle $\mathcal{S}_V$ one then defines a Hilbert $C^*$-module $E_{C(X)}$ between $C(Y)$ and $C(X)$, together with a self-adjoint and regular unbounded operator $D_V$ on $E$, such that 
\[
L^2(\mathcal{S}_Y) \cong E \widehat{\otimes}_{C(X)} L^2(\mathcal{S}_X)
\]
and in such a way that the operator $D_Y$ corresponds to the tensor sum $D_V \otimes \gamma_E + 1 \otimes_\nabla D_X$ for some metric connection $\nabla$ on $E_{C(X)}$ and the grading operator $\gamma_E$ on $E$ (up to an explicit error term related to the curvature).

Let us analyze here the summability aspects of the operator $D_V: \dom(D_V) \to E$. The main property that we will use below is that $D_V$ is the closure of a so-called {\em vertically elliptic operator} $\mathcal D: \Gamma^\infty(\mathcal{S}_V) \to \Gamma^\infty(\mathcal{S}_V)$. This means that for all $f \in C^\infty(Y)$, $[\mathcal D ,f]$ is an endomorphism of $\Gamma^\infty(\mathcal S_V)$ that is invertible at all points where $df|_{\ker d \pi}$ is nonzero (see \cite{MR3873573,kaad2017factorization} for more details). In fact, this allows one to prove \cite[Theorem 3]{kaad2017factorization} that the pair $(E,D_V)$ is an unbounded Kasparov $C(Y)-C(X)$ cycle.

As far as summability is concerned, note that the restrictions $\chi^* \mathcal D= \mathcal D_x$ of $\mathcal D$ to the fibers of $x \in X$ (for the character $\chi:C(X) \to \C$) are elliptic Dirac type operators. Since the dimension of the fibers is constant and equal to $\dim F$ for the typical fiber $F$, one has $\norm{(1 + D_x^2)^{-1/2}}_p < \infty$ for all $p > \dim F$ \cite{MR1441908} ({\em cf.} \cite[Theorem 11.1]{MR1789831}). The question, then, is whether this pointwise trace is continuous.

\begin{proposition}
  The $\mathcal L^p$-norm of $(1+ \mathcal D_x^2)^{-1/2}$ defines (as $x$ varies over $X$) a continuous function on $X$ for any $p > \dim F$. Consequently, the unbounded Kasparov $C(Y)-C(X)$ cycle $(E,D_V)$ is $p$-summable for all $p > \dim F$.
  \end{proposition}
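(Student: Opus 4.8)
The plan is to verify the hypothesis of Definition~\ref{definition:minimal-definition-of-schatten-class} for the operator $(1 + D_V^2)^{-1/2}$. Since $B = C(X)$ is unital, the cycle $(E, D_V)$ is $p$-summable precisely when the function
\[
  \chi \longmapsto \tr \bigl| \chi_* (1 + D_V^2)^{-1/2} \bigr|^p = \tr (1 + \mathcal{D}_x^2)^{-p/2}
\]
lies in $C(X)$, where $\mathcal{D}_x$ is the self-adjoint Dirac-type operator on the closed fibre over $x$ obtained by localising (the closure of) $\mathcal{D}$ along the character $\chi$ corresponding to $x$; here we use that $\chi_*$ intertwines the continuous functional calculus, as in the discussion following the definition of $p$-summability. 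The value of this function at $x$ is exactly $\norm{(1 + \mathcal{D}_x^2)^{-1/2}}_p^p$, finite for every $p > \dim F$ by the fiberwise estimate \cite{MR1441908}; what remains is to establish \emph{continuity} in $x$. For that I would use the Mellin (heat-kernel) representation, valid pointwise by the spectral theorem applied to $\mathcal{D}_x$,
\[
  \tr (1 + \mathcal{D}_x^2)^{-p/2} = \frac{1}{\Gamma(p/2)} \int_0^\infty t^{\frac p2 - 1}\, e^{-t}\, \tr e^{-t \mathcal{D}_x^2}\, dt ,
\]
which reduces the problem to continuity of $x \mapsto \tr e^{-t \mathcal{D}_x^2}$ for fixed $t > 0$ together with an integrable majorant for the integrand that is uniform in $x$.

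For the first ingredient, since continuity is a local property on $X$, I would pick a point of $X$ and a local trivialisation of the submersion $\pi$, identifying $\pi^{-1}(U) \cong U \times F$; under such an identification $\{\mathcal{D}_x\}_{x \in U}$ becomes a smooth family of elliptic Dirac-type operators on the fixed closed manifold $F$, with $x$-dependent Riemannian metric, Clifford module structure and compatible connection, all depending smoothly on $x$ by smoothness of the submersion data (cf. \cite{MR3873573,kaad2017factorization}). Standard heat-kernel theory for such families then produces a parametrised heat kernel $(x, t, y, y') \mapsto K^x_t(y, y')$ that is jointly smooth on $U \times (0, \infty) \times F \times F$, whence $\tr e^{-t \mathcal{D}_x^2} = \int_F \tr K^x_t(y, y)\, dy$ depends smoothly on $x$. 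For the second ingredient, compactness of $X$ turns the short-time heat asymptotics into a uniform estimate $\tr e^{-t \mathcal{D}_x^2} \leq C\, t^{-\dim F / 2}$ for $t \in (0, 1]$ with $C$ independent of $x$, while monotonicity in $t$ gives $\tr e^{-t \mathcal{D}_x^2} \leq \tr e^{-\mathcal{D}_x^2}$ for $t \geq 1$, the right-hand side being bounded uniformly in $x$ because $x \mapsto \tr e^{-\mathcal{D}_x^2}$ is continuous on the compact space $X$. Hence the integrand above is dominated, uniformly in $x$, by $C'' e^{-t}\bigl( t^{\frac p2 - 1 - \frac{\dim F}{2}} + t^{\frac p2 - 1} \bigr)$, which is Lebesgue-integrable on $(0, \infty)$ exactly when $\frac p2 - \frac{\dim F}{2} > 0$, i.e.\ $p > \dim F$. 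Dominated convergence then shows that $x \mapsto \tr (1 + \mathcal{D}_x^2)^{-p/2}$ is continuous, so this function lies in $C(X) = C_0(X)$.

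By Definition~\ref{definition:minimal-definition-of-schatten-class} this membership means $(1 + D_V^2)^{-1/2} \in \sch{p}{E}$, i.e.\ $(E, D_V)$ is a $p$-summable unbounded Kasparov $C(Y)-C(X)$ cycle for every $p > \dim F$; and the stated continuity of the $\mathcal{L}^p$-norm is nothing but the assertion that $x \mapsto \norm{(1 + \mathcal{D}_x^2)^{-1/2}}_p = \bigl( \tr (1 + \mathcal{D}_x^2)^{-p/2} \bigr)^{1/p}$ lies in $C(X)$, which is immediate once the $p$-th power does. The main obstacle is the first ingredient: rigorously establishing the smooth (or even merely continuous) dependence of the fibrewise heat kernels on the base point, \emph{together with} the uniform short-time bound. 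This is precisely the point at which one must invoke the geometry of the Riemannian submersion and the classical theory of smooth families of Dirac-type operators; the Schatten-class machinery developed above contributes nothing here beyond repackaging the resulting continuous function as membership in $\sch{p}{E}$.
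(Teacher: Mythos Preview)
Your argument is correct, but it takes a genuinely different route from the paper. You go through the Mellin representation and reduce everything to continuity of the fibrewise heat traces $x \mapsto \tr e^{-t \mathcal D_x^2}$ together with a uniform short-time bound, then invoke dominated convergence. The paper, by contrast, stays entirely at the resolvent level: after the same local trivialisation step, it writes $\mathcal D_{x'} = Z_{x,x'} \mathcal D_x + W_{x,x'}$ with $Z_{x,x'} \to \id$ and $W_{x,x'} \to 0$, observes that $(\mathcal D_{x'} - \mathcal D_x)(D_x + i)^{-1}$ is bounded with norm tending to zero, and uses the resolvent identity to conclude directly that $x \mapsto (D_x + i)^{-1}$ is continuous as a map into $\sch{p}{L^2(F,\C^k)}$.

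What each approach buys: the paper's argument is considerably lighter in machinery, needing only first-order perturbation of the resolvent and the already-known fact that each $(D_x + i)^{-1}$ lies in $\sch{p}{}$; no heat-kernel construction for families and no uniform short-time asymptotics are required. It also proves a slightly stronger statement, namely $\sch{p}{}$-norm continuity of the resolvent itself, which via Theorem~\ref{theorem:lp-iff-tr-tp-in-c0x} immediately yields the trace continuity you establish. Your route, on the other hand, packages the analytic input into two well-known facts about heat kernels (smooth dependence on parameters and the uniform Weyl-type bound), and the dominated-convergence conclusion is clean; it also makes the threshold $p > \dim F$ appear transparently from the integrability of $t^{p/2 - 1 - \dim F/2}$ near $0$. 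You correctly identify the main cost of your approach in your final paragraph: the heat-kernel inputs, while standard, are substantially heavier than what the paper actually needs.
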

\begin{proof}[Proof (based on {\cite[Section 2.3]{kaad2017factorization}})]
For simplicity of exposition, we will assume that the bundle $\mathcal S_Y \to Y$ is locally trivial over $X$; that is, we assume that around each point $x_0 \in X$ there exists a neighbourhood $U \subset X$ such that 1) there exists a diffeomorphism $\psi \colon \pi^{-1}(U) \to U \times F$, and 2) the bundle $\mathcal S_Y \to Y$ can be smoothly and unitarily trivialized over $U$.
If we pull sections of $\mathcal S_Y$ over $U$ back through this trivialization, we obtain a family of Dirac-type differential operators $\{ \mathcal D_x\}$ on the trivial bundle $\C^k$ over the compact Riemannian manifold $F$.


In particular, these operators can be written as
$$
\mathcal D_x = \sum_{j=1}^{\dim F} A_j(x,z)  \frac{\partial }{\partial z_j} + B(x,z)
$$
with $A_j,B$ symmetric matrix-valued smooth functions on $U \times F$ and $A_j$ invertible.
Thus, there are smooth families $Z_{x, x'}(z) = A_j(x', z) A_j^{-1}(x, z)$, $W_{x, x'}(z) = B(x', z) - Z_{x, x'} B(x, z)$ of matrices such that $\mathcal D_{x'} = Z_{x, x'} \mathcal D_x + W_{x, x'}$, with in particular $\lim_{x' \to x} (\id - Z_{x, x'}) = \lim_{x' \to x} W_{x, x'} = 0$.

Now denote the closure of $\mathcal D_x$ by $D_x$.
Note that $D_x$ is selfadjoint by compactness of $F$. 
Moreover, because $\mathcal D_x$ is an elliptic differential operator of order 1, the resolvent $(D_x + i)^{-1}$ lies in $\sch{p}{L^2(F, \C^k)}$ for all $p > \dim F$.

Then, $D_x ( D_x + i)^{-1}$ being bounded, we see that $\lim_{x' \to x} \| (D_{x'} - D_x)(D_x + i)^{-1} \| \leq \lim_{x' \to x} \| \id - Z_{x, x'} \| + \|W_{x, x'}\| = 0$. 
Thus, by the resolvent identity, we conclude that $\lim_{x' \to x} \| (D_x + i)^{-1} - (D_{x'} + i)^{-1} \|_p = 0$ for all $p > \dim F$, and so $(1 + D^2)^{-1/2} \in \sch{p}{E}$ if and only if $p > \dim F$.
%
%
%
%
\end{proof}

Note that when we combine this result with the factorization result \cite{MR3873573} of $D_Y$ in terms of $D_V$ and $D_X$ we obtain for Riemannian submersions of spin$^c$ manifolds the desired additivity of summability for the unbounded interior Kasparov product.

  \subsubsection{Example: Embedding spheres in Euclidean space}
  We consider a special class of immersions, given by the embedding of spheres $S^n$ in Euclidean space $\R^{n+1}$. This is based on \cite{MR775126,suijlekom2019immersions,verhoevenEmbeddingCirclePlane2019}. As in \cite{MR775126} the embedding $S^n \to \R^{n+1}$ gives rise to an immersion class in KK-theory. For spheres, the unbounded representative is given by the module $C_0(S^n \times (-\epsilon, \epsilon))$ based on a normal neighborhood of $S^n\subset \R^{n+1}$, equipped with the regular self-adjoint operator $S$ given by the multiplication operator with a suitable function $f: (-\epsilon, \epsilon ) \to \R$. For convenience, we will take $S$ to be multiplication by the function
  $$
f(s) = \frac{\pi}{2 \epsilon} \tan \left( \frac{\pi s}{2 \epsilon} \right); \qquad (s \in (-\epsilon, \epsilon)).
  $$
Since $(i+f)^{-1}$ is clearly a $C_0$-function on $S^n \times (-\epsilon, \epsilon)$, we find as in \cite[Lemma 2.3]{suijlekom2019immersions} that $(i+S)^{-1}$ is a compact operator on the Hilbert module $C_0(S^n \times (-\epsilon, \epsilon))$ and so forms an unbounded Kasparov $C(S^n)-C_0(\R^{n+1})$ cycle. 

\begin{proposition}
  We have $(1+S^2)^{-1/2} \in \mathcal L^p( C_0(S^n \times (-\epsilon, \epsilon)))$ for any $p> 0$. Hence the unbounded Kasparov $C(S^n)-C_0(\R^{n+1})$ is $p$-summable for all $p > 0$.
\end{proposition}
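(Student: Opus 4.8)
The plan is to exploit that $S$ acts on $E\eqdef C_0(S^n\times(-\epsilon,\epsilon))$ as multiplication by the function $(x,s)\mapsto f(s)$, so that the whole problem collapses to a one-variable estimate. First I would note that $(1+S^2)^{-1/2}$ is then pointwise multiplication by the bounded continuous function $g\colon(x,s)\mapsto\big(1+f(s)^2\big)^{-1/2}$, which tends to $0$ as $s\to\pm\epsilon$ since $f(s)=\tfrac{\pi}{2\epsilon}\tan\!\big(\tfrac{\pi s}{2\epsilon}\big)\to\pm\infty$ there; thus $g\in C_0(S^n\times(-\epsilon,\epsilon))$ and $(1+S^2)^{-1/2}$ is a well-defined bounded adjointable operator on $E$. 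In view of Definition~\ref{definition:minimal-definition-of-schatten-class} it then suffices to compute the function $\chi\mapsto\tr\big|\chi_*(1+S^2)^{-1/2}\big|^p$ on $\gfdual{C_0(\R^{n+1})}=\R^{n+1}$ and check that it lies in $C_0(\R^{n+1})$.

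Next I would identify the localizations. Using a tubular-neighbourhood diffeomorphism $\Phi$ of $S^n\times(-\epsilon,\epsilon)$ onto an open annulus $N\subset\R^{n+1}$, the module $E$ is, as a Hilbert $C_0(\R^{n+1})$-module, exactly of the type produced by Proposition~\ref{proposition:e-is-gamma0-x-p} for the open set $N$ with one-dimensional typical fibre: $\chi_{y,*}E\cong\C$ for $y\in N$ and $\chi_{y,*}E=\{0\}$ for $y\notin N$ (in particular over $\partial N$). For $y=\Phi(x,s)\in N$ the character $\chi_y$ sends $(1+S^2)^{-1/2}$ to multiplication by the scalar $g(x,s)=\big(1+f(s)^2\big)^{-1/2}$ on that one-dimensional fibre, and to $0$ when $y\notin N$. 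Since the $\sch{p}{\C}$-norm of a rank-$\leq1$ operator is its operator norm, one gets, for every $p\in(0,\infty)$,
\[
  \tr\big|\chi_{y,*}(1+S^2)^{-1/2}\big|^p=
  \begin{cases}
    \big(1+f(s)^2\big)^{-p/2}, & y=\Phi(x,s)\in N,\\[2pt]
    0, & y\notin N.
  \end{cases}
\]

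To finish, I would verify that this function lies in $C_0(\R^{n+1})$. On $N$ it is the pull-back along $\Phi^{-1}$ of the smooth map $(x,s)\mapsto\big(1+f(s)^2\big)^{-p/2}$, hence continuous; and because $f(s)\to\pm\infty$ as $s\to\pm\epsilon$ it tends to $0$ as $y\to\partial N$, which agrees with the value $0$ assigned off $N$. Hence it extends to a continuous function on $\R^{n+1}$, and since $\overline N=\Phi\big(S^n\times[-\epsilon,\epsilon]\big)$ is compact this extension has compact support, so in particular vanishes at infinity. Therefore $\tr|(1+S^2)^{-1/2}|^p\in C_0(\R^{n+1})$, and Definition~\ref{definition:minimal-definition-of-schatten-class} yields $(1+S^2)^{-1/2}\in\sch{p}{E}$ for every $p\in[1,\infty)$; as the displayed formula is valid for all $p>0$, the cycle is $p$-summable for all $p>0$ as claimed.

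I do not expect a genuine analytic obstacle: in contrast to the submersion example above, no fibrewise ellipticity estimate and no min--max or Dini argument are needed here, precisely because $S$ is a multiplication operator with scalar localizations. The only step requiring a little care is the continuity of the pointwise Schatten-norm function across $\partial N$ — one must match the trivial fibre of $E$ over $\partial N$ with the vanishing of $\big(1+f(s)^2\big)^{-p/2}$ as $s\to\pm\epsilon$, which is precisely what the blow-up of $f$ at the endpoints of $(-\epsilon,\epsilon)$ secures (and the reason this particular $f$ is convenient).
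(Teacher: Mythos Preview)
Your argument is correct and follows essentially the same route as the paper: both observe that the localizations of the module $C_0(S^n\times(-\epsilon,\epsilon))$ are at most one-dimensional, so the pointwise Schatten norm of a multiplication operator reduces to the pointwise absolute value, and hence $\tr(1+S^2)^{-p/2}=(1+f^2)^{-p/2}\in C_0$. You are in fact more careful than the paper's two-line proof in distinguishing the base $C_0(\R^{n+1})$ from $C_0(S^n\times(-\epsilon,\epsilon))$ and in checking continuity of the trace across $\partial N$; the paper tacitly identifies the tubular neighbourhood with an open subset of $\R^{n+1}$ and leaves those verifications implicit.
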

\proof
The pointwise localizations of the Hilbert $C_0(X)$-module $C_0(X)$ are one-dimensional, so that the pointwise $\mathcal L^p$-norm of $g \in \lnmp{C_0(X)} = C_b(X)$ is given by pointwise evaluation of $|g|$. Hence, $\tr (1+S^2)^{-p/2} = (1 + f^2)^{-p/2}$, which lies in $C_0$ for all $p > 0$.
\endproof

Again, this is a confirmation of additivity for summability under the unbounded interior Kasparov product. Indeed, in \cite{suijlekom2019immersions} it was shown that $D_{S^n}$ can be related to the immersion class defined by $S$ as above and $D_{\R^{n+1}}$ in the following way. Namely, the unbounded interior product of $S$ and $D_{\R^{n+1}}$ is equal to the unbounded interior product of $D_{S^n}$ with a so-called index cycle $T$. The latter represents the identity at the bounded level but is in fact a $p$-summable Kasparov cycle for all $p > 1$.

\begin{proposition}
The selfadjoint closure $T$ of the operator
\[
T_0 = \begin{pmatrix} 0 & - i \partial_s - i f(s) \\ i \partial_s + i f(s) & 0 \end{pmatrix}
\]
on $C_c^\infty((- \epsilon, \epsilon), \C^2)$ is $p$-summable for all $p > 1$.
\end{proposition}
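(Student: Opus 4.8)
The plan is to reduce the statement to an eigenvalue estimate on the Hilbert space $H = L^2\big((-\epsilon,\epsilon),\C^2\big)$ underlying the cycle: it suffices to show that $(1+T^2)^{-1/2}$ lies in the ordinary Schatten class $\sch{p}{H}$ for every $p>1$ --- directly when the coefficient algebra is $\C$, or, when $T$ occurs as a constant fibre over a compact base, via Theorem~\ref{theorem:lp-iff-tr-tp-in-c0x}. Writing $0 \le \mu_0 \le \mu_1 \le \cdots$ for the eigenvalues of $T^2$ counted with multiplicity, this amounts to $\sum_n (1+\mu_n)^{-p/2} < \infty$ for all $p>1$, and for that it is enough to establish a lower bound $\mu_n \ge c\,n^2 - C$ with $c>0$.

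First I would compute $T^2$. Since $T$ is odd, writing $A$ for its off-diagonal block $-i\partial_s - if$, one has $T^2 = \operatorname{diag}(AA^*,A^*A)$, and a short integration by parts gives $AA^* = -\partial_s^2 + f^2 + f'$ and $A^*A = -\partial_s^2 + f^2 - f'$ on $C_c^\infty(-\epsilon,\epsilon)$. The choice $f(s) = \omega\tan(\omega s)$ with $\omega \eqdef \pi/(2\epsilon)$ satisfies the identity $f' = \omega^2 + f^2$, whence
\[
T^2 = \begin{pmatrix} -\partial_s^2 + 2f^2 + \omega^2 & 0 \\ 0 & -\partial_s^2 - \omega^2 \end{pmatrix},
\]
a direct sum of two Sturm--Liouville operators on the bounded interval $(-\epsilon,\epsilon)$: one with the confining (Pöschl--Teller) potential $2f^2+\omega^2 = 2\omega^2/\cos^2(\omega s) - \omega^2$ and one with a negative constant potential.

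The core estimate is a min--max comparison with the Dirichlet Laplacian $-\Delta^{\mathrm{Dir}}$ on $L^2(-\epsilon,\epsilon)$, whose $k$-th eigenvalue is $\omega^2 k^2$. Integrating by parts one obtains, for smooth compactly supported $u$, the identities $\|A^* u\|^2 = \|u'\|^2 + \int(2f^2+\omega^2)|u|^2 \ge \|u'\|^2$ and $\|A u\|^2 = \|u'\|^2 - \omega^2\|u\|^2$; since the blow-up $f^2 \sim (\epsilon \mp s)^{-2}$ at the endpoints puts $T_0$ in the limit-point case at both ends --- so that $T_0$ is essentially self-adjoint on $C_c^\infty((-\epsilon,\epsilon),\C^2)$ and the form domain of $T^2$ is contained in $H^1_0(-\epsilon,\epsilon)^2$ --- these persist on the form domain. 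Therefore the quadratic form of $T^2$ dominates that of $(-\Delta^{\mathrm{Dir}})^{\oplus 2} - \omega^2$, and the min--max principle gives $\mu_n(T^2) \ge \mu_n\big((-\Delta^{\mathrm{Dir}})^{\oplus 2}\big) - \omega^2 \asymp n^2$. Consequently the singular values of $T$ grow at least linearly, those of $(1+T^2)^{-1/2}$ decay like $1/n$, and $\sum_n (1+\mu_n)^{-p/2} \lesssim \sum_n n^{-p}$ converges precisely for $p>1$; this yields $(1+T^2)^{-1/2} \in \sch{p}{H}$ for all $p>1$. (Alternatively one can diagonalise the two Pöschl--Teller operators explicitly: the spectrum of $T^2$ is $\{0\} \cup \{\omega^2(k^2-1) : k \ge 2\}$, with the zero mode $\cos(\omega s)$ accounting for the index class, and the same conclusion is immediate.)

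The step requiring the most care is the endpoint analysis: verifying essential self-adjointness of $T_0$ and that the form domain of its closure lies inside $H^1_0(-\epsilon,\epsilon)^2$, which is what makes the Poincaré inequality $\|u'\|^2 \ge \omega^2\|u\|^2$ --- and hence the min--max comparison above --- legitimate. This is a routine limit-point computation at the two singular endpoints, using that exactly one of $e^{\pm\int f} \in \{\cos(\omega s), 1/\cos(\omega s)\}$ is square-integrable near each end, but it is the only non-formal ingredient in the argument.
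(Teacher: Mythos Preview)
Your argument is correct and follows the same core idea as the paper: bound the quadratic form of $T^2$ from below by that of the Dirichlet Laplacian on $(-\epsilon,\epsilon)^{\oplus 2}$ (up to the constant $\omega^2$) and invoke the min--max principle to obtain $\mu_n(T^2)\gtrsim n^2$. The paper's proof simply asserts the form inequality $\Delta_\epsilon + 1 < T^2 + \lambda^2 + 1$ for $|\lambda|>\pi/(2\epsilon)$ by reference to \cite[Lemma~2.11]{suijlekom2019immersions}, whereas you derive it by explicitly computing $T^2=\operatorname{diag}(-\partial_s^2+2f^2+\omega^2,\,-\partial_s^2-\omega^2)$ and recognising the P\"oschl--Teller structure; your explicit diagonalisation of the spectrum is a nice bonus the paper does not offer. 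Conversely, the paper also proves the upper bound $\sigma_n(T^2)\le\sigma_n(\Delta_{\epsilon/2})+c$ by comparing with the Dirichlet Laplacian on the smaller interval $(-\epsilon/2,\epsilon/2)$, thereby establishing that $p>1$ is sharp; this is not required for the stated proposition, but you may want to add it for completeness. Your careful treatment of the limit-point endpoint analysis and form-domain inclusion in $H^1_0$ is more explicit than what the paper provides.
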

\begin{proof}
As in \cite[Lemma 2.11]{suijlekom2019immersions}, for $|\lambda| > \frac{\pi}{2 \epsilon}$ we have $\Delta_\epsilon + 1 < T^2 + \lambda^2 + 1$, where $\Delta_\epsilon$ is the closure of the Dirichlet Laplacian on $C_c^\infty((-\epsilon, \epsilon))^{\oplus 2}$.
On the other hand, if $\Delta_{\epsilon/2}$ is the closure of the Dirichlet Laplacian on $C_c^\infty((-\epsilon/2, \epsilon/2))^{\oplus 2}$ and $c = f^2(\epsilon/2) + |f'(\epsilon/2)|$, then $\norm{T^2 - \Delta_{\epsilon/2} |_{L^2((-\epsilon/2, \epsilon/2))^{\oplus 2}}} = c < \infty$ so that, by the min-max principle, the singular values $\sigma_n(T^2)$ are bounded from above by $\sigma_n(\Delta_{\epsilon/2}) + c$.
Thus, one has $\| (\Delta_{\epsilon/2} + \lambda^2 + c + 1)^{-1} \|_p \leq \| (T^2 +\lambda^2 + 1)^{-1} \|_p \leq \| (\Delta + 1)^{-1} \|_p$ and so $(T \pm \lambda i)^{-1} \in \mathcal L^p$ if and only if $p > 1$.
\end{proof}

 As such, the summability of $D_{\R^{n+1}}$ plus that of the immersion cycle ({\em i.e.} $0^+$) indeed coincides with the summability of $D_{S^n}$ plus that of the index cycle. 

  
  \subsubsection {Example: Actions of $\Z$}

  Consider the standard $C(X)$-module $E = L^2(S^1) \otimes_\C C(X)$ equipped with the `Dirac' operator
  \[
    D = D_{S^1} \otimes 1.
  \]
  Then $\tr \chi_* |D|^{-p} = \tr |D_{S^1}|^{-p} < \infty$ for all characters $\chi$ and $p> 1$ so that we have $|D|^{-1} \in \sch{p}{E}$ for $p>1$. 
 
  As an example of an unbounded Kasparov cycle, consider a homeomorphism on $X$ and consider the action $n \cdot f \eqdef f \circ \phi^n$ of $\Z$ on $C(X)$.
  Let $C(X) \rtimes_\phi \Z$ be the corresponding full crossed product \cstar-algebra. 
  Consider the unitary $U \in \lnmp{E}$ given by $U = S \otimes 1$, where $S$ is multiplication by $\theta \mapsto e^{i \pi \theta}$ on $L^2(S^1)$.
  Then, the map $\rho$ defined on finite sums in $C(X) \rtimes_\phi \Z$ by
  \[
  \rho \colon 
  \sum_k f_k u_k \mapsto \sum_k f_k U^k,
  \]
  where the left action of $C(X)$ is just given by pointwise multiplication, extends by universality to a representation of $C(X) \rtimes_\phi \Z$.
  Moreover, $[D, \sum_k f_k U^k] = \sum_k k f_k U^k$ because $[D_{S^1}, S^k] = k S$, so that there is a dense subalgebra of $C(X) \rtimes_{\phi} \Z$ with $[D, a]$ bounded.
  We conclude that $(E_{C(X)}, D)$ is an unbounded $p$-summable Kasparov $C(X) \rtimes_{\phi} \Z-C(X)$ cycle for all $p > 1$.

  In particular, one has
  \[
    \zeta_D(f u_k, z)(x) = \tr \chi_* (f U^k D^{-z}) = f(x) \tr_{L^2(S^1)} S^k |D_{S^1}|^{-z},
  \]
  so that $\zeta_D(\sum_k f_k u_k, z)$ extends meromorphically to $\C \setminus \{1\}$ and in fact
  \[
    \res_{z = 1} \zeta_D(\sum_k f_k u_k, z) \propto f_0.
  \]

\myacknowledgments{
We sincerely thank Jens Kaad for his advice during and following a month-long visit to Odense and for his valuable comments on the manuscript.

\nwoacknowledgment

}

\begingroup
\raggedright
\printbibliography
\endgroup

\end{document}
